\DeclareMathOperator*{\restprod}%
 {\mathchoice{\ooalign{\ensuremath{\displaystyle\prod}\crcr\ensuremath{\displaystyle\coprod}}}%
             {\ooalign{\ensuremath{\textstyle\prod}\crcr\ensuremath{\textstyle\coprod}}}%
             {\ooalign{\ensuremath{\scriptstyle\prod}\crcr\ensuremath{\scriptstyle\coprod}}}%
             {\ooalign{\ensuremath{\scriptscriptstyle\prod}\crcr\ensuremath{\scriptscriptstyle\coprod}}}%
 }
\newtheorem{thm}{Theorem}[section]
\newtheorem{conj}[thm]{Conjecture}
\newtheorem{prop}[thm]{Proposition}
\newtheorem{rem}[thm]{Remark}
\newtheorem{lem}[thm]{Lemma}
\newcommand{\Gal}{\mathop{\mathrm{Gal}}\nolimits}
\newcommand{\Coker}{\mathop{\mathrm{Coker}}\nolimits}
\newcommand{\pr}{\mathop{\mathrm{pr}}\nolimits}
\newcommand{\Norm}{\mathop{\mathrm{Norm}}\nolimits}
\newcommand{\Aut}{\mathop{\mathrm{Aut}}\nolimits}
\newcommand{\Ann}{\mathop{\mathrm{Ann}}\nolimits}
\newcommand{\nr}{\mathop{\mathrm{nr}}\nolimits}
\newcommand{\Irr}{\mathop{\mathrm{Irr}}\nolimits}
\newcommand{\Res}{\mathop{\mathrm{Res}}\nolimits}
\newcommand{\Ind}{\mathop{\mathrm{Ind}}\nolimits}
\begin{document}
\title{On non-abelian Brumer and Brumer-Stark conjectures for monomial CM-extensions}
\author{Jiro Nomura}
\date{}
\maketitle
  \begin{abstract}
Let $K/k$ be a finite Galois CM-extension of number fields whose Galois group $G$ is monomial and $S$ a finite set of places of $k$.\ Then the ``Stickelberger element'' $\theta_{K/k,S}$ is defined.\ Concerning this element,\ Andreas Nickel formulated the non-abelian Brumer and Brumer-Stark conjectures and their ``weak'' versions.\ In this paper,\  when $G$ is a monomial group,\ we prove that the weak non-abelian conjectures are reduced to the weak conjectures for abelian subextensions.\ We write $D_{4p},\ Q_{2^{n+2}}$ and $A_4$ for the dihedral group of order $4p$ for any odd prime $p$,\ the  generalized quaternion group of order $2^{n+2}$ for any natural number $n$ and the alternating group on $4$ letters respectively.\ Suppose that $G$ is isomorphic to $D_{4p}$,\ $Q_{2^{n+2}}$ or $A_4 \times \mathbb{Z}/2\mathbb{Z}$.\ Then we prove the $l$-parts of the weak non-abelian conjectures,\ where $l=2$ in the quaternion case,\ and  $l$ is an arbitrary prime which does not split in $\mathbb{Q}(\zeta_p)$ in the dihedral case and in $\mathbb{Q}(\zeta_3)$  in the alternating case.\ In particular,\ we do not exclude the $2$-part of the conjectures and do not assume that $S$ contains all finite places which ramify in $K/k$ in contrast with Nickel's formulation.\ 
  \end{abstract}
 \section{Introduction}
 Let $K/k$ be a finite abelian  Galois CM-extension of number fields with Galois group $G$ and let $S$ be a finite set of  places of $k$ which contains all infinite places.\ Then there exists a so-called`` Stickelberger element" $\theta_{K/k,S}$  attached to $K/k$ and $S$.\ Let $\mu(K)$ be the roots of unity in $K$ and $Cl(K)$ be the ideal class group of $K$.\ We also assume that $S$ contains all finite places which ramify in $K/k$.\ Then it was proven independently in \cite{DR}, \cite{Ba} and \cite{Ca} that
\[
 \Ann_{\mathbb{Z}[G]}(\mu(K)) \theta_{K/k,S} \subset \mathbb{Z}[G].\ 
\] 
where $\Ann_{\mathbb{Z}[G]}$ is the $\mathbb{Z}[G]$-annihilator of $\mu(K)$.\  Now Brumer's conjecture asserts 
\begin{conj}[Brumer's conjecture]\label{conj:conj0}
\[
 \Ann_{\mathbb{Z}[G]}(\mu(K)) \theta_{K/k,S} \subset \Ann_{\mathbb{Z}[G]} (Cl(K)).\ 
 \]
\end{conj}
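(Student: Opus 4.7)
The plan is Iwasawa-theoretic: work one rational prime $l$ at a time and deduce the statement from the Main Conjecture for totally real fields together with a descent argument. First I would fix $l$, tensor with $\mathbb{Z}_l$, and pass to the cyclotomic $\mathbb{Z}_l$-tower $K_\infty/K$. The projective limit $X_\infty = \varprojlim Cl(K_n)_l$ of $l$-parts of class groups is a finitely generated module over the Iwasawa algebra $\Lambda = \mathbb{Z}_l[[\Gal(K_\infty/k)]]$; Wiles' proof of the Main Conjecture identifies the characteristic ideal of (the minus part of) $X_\infty$ with a $\Lambda$-adic $L$-function whose specialisation at finite level recovers, up to Euler factors, the Stickelberger element $\theta_{K/k,S}$.

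Second, I would descend from the infinite level down to $K$ itself. Using control theorems relating the $\Gal(K_\infty/K)$-coinvariants of $X_\infty$ to $Cl(K)_l$, the characteristic-ideal statement over $\Lambda$ translates into an annihilation statement for $Cl(K)_l$ over $\mathbb{Z}_l[G]$. The factor $\Ann_{\mathbb{Z}[G]}(\mu(K))$ on the left enters precisely at this step: it absorbs the Euler factors at the archimedean primes and at primes above $l$ that are introduced when one passes from $\Lambda$-adic objects to their specialisations, ensuring that the resulting element lies in $\mathbb{Z}_l[G]$ rather than merely in its total ring of fractions, and that it actually annihilates $Cl(K)_l$.

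Finally, one reassembles the $l$-local statements into the desired global inclusion by working component by component in $\mathbb{Z}[G] = \prod_l \mathbb{Z}_l[G] \cap \mathbb{Q}[G]$. The main obstacle, and the reason the conjecture remains open in general even in this abelian setting, is the prime $l=2$: the Main Conjecture at $2$ is much more delicate, Fitting ideals are poorly behaved over $\mathbb{Z}_2[G]$ when $|G|$ is even, and the clean passage from characteristic ideals to annihilators available for odd $l$ (through the structure of $\Lambda$-modules together with semisimplicity of $\mathbb{Q}_l[G]$) breaks down. This is precisely why the rest of the paper works only with the weak non-abelian versions and imposes hypotheses on $l$ tailored to avoid the $2$-adic pathologies.
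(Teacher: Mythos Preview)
The statement you are addressing is \emph{Conjecture}~\ref{conj:conj0}, not a theorem; the paper does not prove it and does not claim to. It is stated as Brumer's conjecture, discussed as open, and the rest of the paper is devoted to partial and conditional results on non-abelian analogues. So there is no ``paper's own proof'' to compare your proposal against, and any proposal that purports to prove the full statement must fail somewhere.

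Your outline is essentially the route taken in the conditional results the introduction surveys (Greither \cite{Gr07}, Nickel \cite{Ni10b}, Greither--Popescu \cite{GP}), and the gaps in your sketch are exactly the hypotheses those papers have to impose. The passage from the characteristic ideal of $X_\infty^{-}$ furnished by Wiles to an \emph{annihilator} of $Cl(K)_l$ is not automatic: characteristic ideals need not annihilate, and one needs either vanishing of the Iwasawa $\mu$-invariant, a cohomological-triviality hypothesis on $\mu(K)_l$, or the assumption that $S$ contains the primes above $l$ to make the descent go through. Your description of $\Ann_{\mathbb{Z}[G]}(\mu(K))$ as ``absorbing Euler factors at archimedean primes and primes above $l$'' is not accurate; its role (via Deligne--Ribet/Cassou-Nogu\`es) is to force integrality of $\theta_{K/k,S}$, not to handle the control-theorem error terms. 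Finally, your last paragraph concedes that the conjecture remains open at $l=2$, which is already an admission that what precedes is not a proof of the stated conjecture but at best a conditional argument for the odd part --- and even that requires the unstated hypotheses above.
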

\noindent
where $\Ann_{\mathbb{Z}[G]} (Cl(K))$ is the $\mathbb{Z}[G]$-annihilator ideal of $Cl(K)$.\  In the case $k = \mathbb{Q}$,\ the field of rational numbers,\ conjecture \ref{conj:conj0} is Stickelberger's classical theorem.\ 
There exists a more refined conjecture which is called the Brumer-Stark conjecture.\ The assertion is 
\clearpage 
\begin{conj}[The Brumer-Stark conjecture]\label{conj:conj01}\ 
\begin{itemize}
 \item For any fractional ideal $\mathfrak{A}$ of $K$,\ there exists an {\it anti-unit} $\alpha \in K^{*}$ such that $\mathfrak{A}^{w_K \theta_{K/k,S}} = (\alpha)$ 
 \item $K(\alpha^{1/w_K}) / k$ is abelian.\ 
\end{itemize}
where $w_K = |\mu(K)|$,\ $K^{*}=K \setminus \{0\}$ and the term  {\it anti-unit} means that $\alpha^{1+j} = 1$ for the unique complex conjugation  $j$ in $G$.\
\end{conj}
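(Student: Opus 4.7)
The plan is to decompose the conjecture into its three separate claims --- the existence of $\alpha$ with $\mathfrak{A}^{w_K\theta_{K/k,S}}=(\alpha)$, the anti-unit condition $\alpha^{1+j}=1$, and the abelianness of $K(\alpha^{1/w_K})/k$ --- and to attack each one rational prime $\ell$ at a time, since $\mathbb{Z}[G]$-modules such as $Cl(K)$ and $\mu(K)$ decompose naturally into $\ell$-parts. By the integrality results \cite{DR}, \cite{Ba}, \cite{Ca} cited above, we already have $w_K\theta_{K/k,S}\in\mathbb{Z}[G]$, so the first bullet amounts to showing that $w_K\theta_{K/k,S}$ annihilates $Cl(K)$, i.e.\ a $w_K$-strengthening of Brumer's annihilation statement (Conjecture~\ref{conj:conj0}).

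For the annihilation, I would use the Iwasawa main conjecture for the totally real field $k$, in the form of Wiles' theorem, combined with the Deligne--Ribet interpretation of $\theta_{K/k,S}$ as a specialisation of a $p$-adic $L$-function. The main conjecture identifies the characteristic ideal of an appropriate Iwasawa module over the cyclotomic $\mathbb{Z}_\ell$-extension with this $p$-adic $L$-function, and a descent to finite level translates the characteristic ideal statement into the desired annihilator statement, modulo a careful tracking of trivial zeros at $s=0$ and of the minus idempotent $(1-j)/2$. The anti-unit condition then becomes comparatively formal: since $j\cdot\theta_{K/k,S}=-\theta_{K/k,S}$ on the minus component, $(\alpha)^{1+j}$ is the trivial ideal, so $\alpha^{1+j}$ is a root of unity which can be absorbed into the choice of $\alpha$.

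The Kummer condition that $K(\alpha^{1/w_K})/k$ be abelian is the genuinely Stark-type refinement and is, in my view, the main obstacle. The natural approach is to produce $\alpha$ not abstractly but explicitly, for instance from Shintani-style zeta-value formulas or as the image of an Euler system element, so that one can compute its local Kummer symbols and verify the reciprocity identities that force abelianness. In the abelian setting this is known to be equivalent to suitable cases of the equivariant Tamagawa number conjecture, so I would expect the proof to rest ultimately on an ETNC-type input or on an equivariant main conjecture with sufficiently strong coefficients. The $2$-part would be the most delicate throughout, since minus parts do not split cleanly at $\ell=2$ and the trivial zero phenomena are more intricate there; this is presumably why the abstract admits the $2$-part only under extra hypotheses on the ambient group.
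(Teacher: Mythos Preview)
The statement you are attempting to prove is Conjecture~\ref{conj:conj01}, the (abelian) Brumer--Stark conjecture. This is stated in the paper as an \emph{open conjecture}, not a theorem: the paper supplies no proof of it and does not claim one. It is recalled in the introduction purely as background, and the paper's actual contributions concern the \emph{non-abelian} weak versions (Conjectures~\ref{conj:wbrumer} and~\ref{conj:wbrumer-stark}), which are deduced from the abelian conjecture in Theorems~\ref{thm:brumer} and~\ref{thm:brumer-stark} \emph{under the assumption} that the abelian case is already known. So there is no ``paper's own proof'' against which to compare your proposal.

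As for the proposal itself: what you outline is a reasonable summary of the conditional machinery discussed in the introduction (Wiles' main conjecture, descent, ETNC input as in \cite{Gr07}, \cite{Ni10b}, \cite{GP}), but it does not yield an unconditional proof. The Iwasawa-theoretic descent you invoke requires the vanishing of the $\mu$-invariant, and even then the results in the literature typically need $S$ to contain primes above $p$ or impose further hypotheses; the Kummer/abelianness condition is not a formal consequence of annihilation and is precisely the refinement that distinguishes Brumer--Stark from Brumer. Your remark that the $2$-part ``is presumably why the abstract admits the $2$-part only under extra hypotheses'' misreads the paper: the point of the present work is exactly that the $2$-part of the weak non-abelian conjectures \emph{is} established (for the specific groups $D_{4p}$, $Q_{2^{n+2}}$, $\mathbb{Z}/2\mathbb{Z}\times A_4$), using Tate's and Greither--Roblot--Tangedal's unconditional abelian results rather than Iwasawa theory. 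In short, you have sketched a heuristic for a conjecture the paper does not purport to prove.
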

In general,\ the Brumer-Stark conjecture implies Brumer's conjecture.\ There exists a large body of evidence of the above two conjectures; C. Greither\cite{Gr07} showed that the $p$-part of Brumer's conjecture is valid with the assumption that the $p$-part of the group of roots of unity $\mu(K)$ is a cohomologically trivial $G$-module and the appropriate equivariant Tamagawa number conjecture is valid.\ A.\ Nickel\cite{Ni10b} showed that the $p$-part of the Brumer-Stark conjecture is true if $p$ is ``non-exceptional" (also see \cite{Nib}) and the Iwasawa $\mu$-invariant vanishes (he actually proved in loc. cit. the ``the strong Brumer-Stark property",\  and this implies Brumer's conjecture and the Brumer-Stark conjecture).\ Recently,\ C.\ Greither and C. Popescu \cite{GP} showed that the $p$-part of the Brumer-Stark conjecture is true if $S$ contains all primes above $p$ and the Iwasawa $\mu$-invariant vanishes.\ There also exist  several unconditional results.\ J.\ Tate  showed that  the Brumer-Stark conjecture is true for any relative quadratic extensions \cite[\S3,\ case(c)]{Ta} and any relative extension of degree $4$ which is contained in some non-abelian Galois extension of degree $8$ \cite[\S,\ case(e)]{Ta}.\ C. Greither, X.-F. Roblot, B. Tangedal \cite{GRT04} showed that the Brumer-Stark conjecture is true for certain relative cyclic CM-extensions of degree $2p$ for any odd prime $p$.\ In $\S \ref{sec:appl}$,\ we will use the results and the methods in the last three papers in an essential way.\

In \cite{Ni11},\  Nickel formulated non-abelian generalizations of Conjecture \ref{conj:conj0} and Conjecture \ref{conj:conj01} (see \S \ref{sec:conjectures},\  also see the original paper \cite{Nia}).\ In this paper,\ we call the conjectures the non-abelian Brumer conjecture and the non-abelian Brumer-Stark conjecture.\  In \cite{Bu1},\ D. Burns independently  gave another non-abelian generalization of Brumer's conjecture which implies Nickel's version  and in \cite{Bu2} he also proved that ``the Gross-Stark conjecture" implies a refinement of the odd $p$-part of his conjecture with the assumption that the Iwasawa $\mu$-invariant vanishes if $p$ divides [$K$:$k$].\  As well as for the abelian case,\ there exists several results concerning the non-abelian conjectures;Nickel\cite{Ni11}  showed that the $p$-part of the non-abelian Brumer conjecture and the non-abelian Brumer-Stark conjecture hold if $p$ is ``non-exceptional'' and the Iwasawa $\mu$-invariant vanishes.\ Also in \cite{Nic} he generalized the method of \cite{GP} to non-abelian settings and  proved that the odd $p$-part of the non-abelian Brumer conjecture and the non-abelian Brumer-Stark conjecture hold if $S$ contains all primes above $p$ and the Iwasawa $\mu$-invariant vanishes.\ These results are proven via the non-commutative Iwasawa main conjecture for $1$-dimensional $p$-adic Lie extensions (or,\ equivalently,\  the equivariant Iwasawa main conjecture of Ritter and Weiss) which was proven independently by M. Kakde \cite{Ka} and J. Ritter and A. Weiss \cite{RW}.\ 

\par
In what follows,\ we remove the assumption $G$ is abelian.\ Let $\Lambda^{\prime}$ be a maximal $\mathbb{Z}$-order in $\mathbb{Q}[G]$ which contains $\mathbb{Z}[G]$ and $\mathfrak{F}(G)$ denote the central conductor of $\Lambda^{\prime}$ over $\mathbb{Z}[G]$.\ For any prime $\mathfrak{p}$ of $k$,\ we fix a prime $\mathfrak{P}$ of $K$ above $\mathfrak{p}$ and a Frobenius automorphism $\phi_{\mathfrak{P}}$ at $\mathfrak{P}$.\ For another finite set $T$ of places of $k$,\ we set $E_S^{T}:=\{x \in E_S \mid x \equiv 1 \mod \prod_{\mathfrak{P} \in T(K)} \mathfrak{P}\}$ where $E_S$ is the group of $S$-units of $K$.\  We refer to the following condition as $Hyp(S,T)$;
\begin{itemize}
\item $S$ contains all infinite places and all ramifying places
\item $S \cap T=\emptyset$
\item $E_S^{T}$ is torsion free  
\end{itemize}
For each finite set $T$ of places of $k$ such that $Hyp(S,T)$ is satisfied,\ we define 
\[
 \delta_T := \nr(\prod_{\mathfrak{p} \in T}1-\phi_{\mathfrak{P}}^{-1}N\mathfrak{p})
\]
and set 
\[
 \mathfrak{A}_S:=\langle \delta_T \mid \text{$Hyp(S,T)$ is satisfied} \rangle_{\zeta(\mathbb{Z}[G])}
\]
where $\nr$ is the reduced norm of the semisimple algebra $\mathbb{Q}[G]$ and $\zeta(\mathbb{Z}[G])$ is the center of $\mathbb{Z}[G]$.\ By \cite[Lemma 1.1]{Ta84},\ $\mathfrak{A}_S$ coincides with the $\mathbb{Z}[G]$-annihilator of the roots of unity in $K$ if $G$ is abelian.\  In \cite{Ni11},\ Nickel also formulated weaker versions of  non-abelian Brumer conjecture and the weak non-abelian Brumer-Stark conjecture (for the details,\ see Conjecture \ref{conj:brumer} and \ref{conj:brumer-stark} in \S 2).\ The statement of the former conjecture is 
\begin{conj}[The weak non-abelian Brumer conjecture]\label{conj:brumer0}
\ 
\begin{itemize}
 \item $\mathfrak{A}_S \theta_S \subset \zeta(\Lambda^{\prime})$,\ 
 \item For any $x \in \mathfrak{F}(\mathbb{Z}[G])$,\ $x\mathfrak{A}_S \theta_S \subset \Ann_{\mathbb{Z}[G]}(Cl(K))$.\ 
\end{itemize}
\end{conj}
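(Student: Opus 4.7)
Since the statement is a conjecture, not a theorem, I interpret the task as sketching the strategy by which it can be attacked in the setting of this paper, namely when $G$ is monomial. The plan is to exploit the monomial structure to reduce Conjecture~\ref{conj:brumer0} to the \emph{abelian} weak Brumer conjecture for suitable intermediate CM-extensions, and then to invoke known abelian results. Since every irreducible complex character $\chi$ of $G$ is monomial, I may write $\chi = \Ind_U^G \psi$ for some subgroup $U \leq G$ and some linear character $\psi$ of $U$. The $\chi$-component of the Stickelberger element $\theta_{K/k,S}$ is then determined, up to Euler factors at the places of $S$, by the abelian Stickelberger element attached to $L/F$, where $L = K^{\ker \psi}$ and $F = K^U$. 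I use this uniformly to check each containment after Wedderburn decomposition.

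The first bullet, $\mathfrak{A}_S\theta_S \subset \zeta(\Lambda')$, is an integrality statement; I would verify it $\chi$-by-$\chi$ using the classical Deligne--Ribet and Cassou-Nogu\`es integrality theorems applied to $L/F$, observing that the auxiliary elements $\delta_T = \nr(\prod_{\mathfrak{p}\in T}(1-\phi_{\mathfrak{P}}^{-1}N\mathfrak{p}))$ in the definition of $\mathfrak{A}_S$ are precisely designed to kill the denominators coming from the torsion subgroup $\mu(L)$.

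For the second bullet I would proceed in two stages, working one prime $l$ at a time. First, multiplication by $x \in \mathfrak{F}(\mathbb{Z}[G])$ absorbs the central primitive idempotents $e_\chi \in \zeta(\mathbb{Q}[G])$, so that the relevant element of $x\mathfrak{A}_S\theta_S$ actually lies in $\mathbb{Z}_l[G]$ and acts on the $\chi$-isotypic component of $Cl(K)_l$ through a genuine group-ring element. Second, by Shapiro's lemma and the functoriality of Stickelberger elements under induction of characters, this action is computed from the action of $\delta_T\,\theta_{L/F, S_F}$ on a suitable quotient of $Cl(L)_l$; the weak abelian Brumer conjecture for $L/F$ then supplies the required annihilation. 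When $G$ is one of $D_{4p}$, $Q_{2^{n+2}}$, or $A_4 \times \mathbb{Z}/2\mathbb{Z}$, the relevant abelian subextensions are of degree small enough (quadratic, or cyclic of degree $2p$, or cyclic of degree $2\cdot 3$) that the $l$-parts of the abelian conjectures are known unconditionally from the results of Tate and of Greither--Roblot--Tangedal mentioned in the introduction.

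The principal obstacle, as always in this circle of ideas, is the control of error terms arising when annihilators of class groups are transferred from $L$ back to $K$: the norm and inflation maps have kernels and cokernels governed by genus-theoretic contributions at ramified primes, and these must be absorbed either by the conductor $\mathfrak{F}(\mathbb{Z}[G])$ or by the choice of $\delta_T$'s. This is precisely why, in the cases announced in the abstract, one must restrict to primes $l$ which do not split in $\mathbb{Q}(\zeta_p)$, respectively $\mathbb{Q}(\zeta_3)$: under that hypothesis the $l$-adic character theory of the non-abelian subquotients is rigid enough that the error terms disappear and the reduction to abelian extensions becomes clean.
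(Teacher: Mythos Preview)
Your high-level strategy---reduce to abelian subextensions via the monomial structure and then invoke known abelian results---matches the paper. However, the mechanism you describe for the annihilation step is not the one the paper uses, and two of your explanations are genuinely off the mark.

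\medskip

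\textbf{The transfer mechanism.} You invoke Shapiro's lemma and speak of the action on ``a suitable quotient of $Cl(L)_l$'' together with ``error terms'' coming from kernels and cokernels of norm/inflation maps and ``genus-theoretic contributions''. None of this appears in the paper's argument, and in fact there are no error terms at all. The key device is purely algebraic: Lemma~\ref{lem:conductor} shows that for each irreducible $\chi$, the $\chi$-block of any $x \in \mathfrak{F}(\mathbb{Z}[G])$ can be rewritten as $\sum_{\phi}\sum_g x_\chi^g \pr_{\phi^g}$, summed over the linear characters $\phi$ of $H$ with $\Ind_H^G\phi = \chi^\sigma$, and this element already lies in $\mathfrak{F}(\mathbb{Z}[H])$. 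One then uses the elementary identity $\pr_{\phi} = \pr_{\phi'} \cdot \Norm_{K/K_\phi}$ (where $K_\phi = K^{\ker\phi}$ and $\phi'$ is the deflation of $\phi$): for any ideal $\mathfrak{A}$ of $K$, the ideal $\mathfrak{A}^{\Norm_{K/K_\phi}}$ is extended from $K_\phi$, so an annihilator of $Cl(K_\phi)$ applied afterwards makes it principal in $K_\phi$, hence in $K$. This lift is exact---nothing is lost and no error term has to be absorbed by $\mathfrak{F}(\mathbb{Z}[G])$ or by the $\delta_T$'s.

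\medskip

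\textbf{The role of the non-splitting hypothesis.} You attribute the condition that $l$ does not split in $\mathbb{Q}(\zeta_p)$ (resp.\ $\mathbb{Q}(\zeta_3)$) to the need to control these putative error terms in the reduction step. That is not its role. The reduction theorem (Theorem~\ref{thm:brumer}) holds for \emph{every} prime $l$ with no such hypothesis. The non-splitting condition enters only in Section~\ref{sec:appl}, where one must \emph{verify} the required abelian annihilation for the cyclic degree-$2p$ extension $K/k_5$ (resp.\ the degree-$6$ extension). There it guarantees that $\mathbb{Z}_l[\zeta_p]$ is a discrete valuation ring, so the class-number comparison (equations (\ref{eq:classeq})--(\ref{eq:cn3})) forces $\overline{\theta}_{K/F}$ to annihilate the relevant $\mathbb{Z}_l[\zeta_p]$-module. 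This is an input from the Greither--Roblot--Tangedal style argument, not a feature of the non-abelian reduction.

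\medskip

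\textbf{Minor point.} You say the ``weak abelian Brumer conjecture'' is what feeds into the reduction. The paper actually assumes the \emph{full} abelian Brumer conjecture for the subextensions in the list $\mathbb{K}$ (though, as noted in the remark after Theorem~\ref{thm:brumer-stark}, slightly less is strictly needed, namely the specific statements (\ref{eq:weakann2})).
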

\noindent
In \cite{BJ},\ D. Burns and H. Johnston showed that for any odd prime $p$,\ the ``$p$-part" of this conjecture holds with some modification of $\theta_{S}$ if $p$ is unramified in $K/\mathbb{Q}$ and every inertia subgroup is normal in $G$.\ They deduce the weak annihilation result from  ``the Strong Stark conjecture" at $p$ which holds by \cite[Corollary 2]{Ni10b} in this case.\
%% Let $n$ be any natural number.\ Then,\  our main theorems Theorem \ref{thm:di} and \ref{thm:qua} below contain this result in the case the Galois group $G$ is isomorphic to the dihedral group of order $4p$ or the generalized quaternion group of order $2^{n+2}$.\ In particular,\ we prove  the $2$-part of the conjecture which they exclude.\ 
\par
For any $\alpha \in K^{*}$,\ we set $S_\alpha:=\{\mathfrak{p} \mid \mathfrak{p}$ is a prime of $k$ and $\mathfrak{p}$ divides $N_{K/k}\alpha$\} and $w_K := \nr(|\mu_K|)$.\ Then the statement of the latter conjecture is  
\begin{conj}[The weak non-abelian Brumer-Stark conjecture]\label{conj:brumer-stark0}
\ 
\begin{itemize}
 \item $w_K \theta_{K/k,S} \in \zeta(\Lambda^{\prime})$,\ 
 \item For any fractional ideal $\mathfrak{A}$ of $K$ and for each $x \in \mathfrak{F}(\mathbb{Z}[G])$,\ there exists anti unit $\alpha = \alpha(\mathfrak{A},S,x)$ such that $\mathfrak{A}^{xw_K \theta_{K/k,S}} = (\alpha)$,\
\end{itemize}
and for any finite set T of places of $k$ which satisfies $Hyp(S \cup S_{\alpha},T)$,\ there exists $\alpha_T \in E_{S_\alpha}^{T}(K)$ such that 
\begin{equation*}
 \alpha^{z \delta_T} = \alpha_T^{zw_K}
\end{equation*}
for each $z \in \mathfrak{F}(\mathbb{Z}[G])$.\ 
\end{conj}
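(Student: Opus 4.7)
The plan is to reduce the weak non-abelian Brumer-Stark conjecture for the monomial extension $K/k$ to the abelian weak Brumer-Stark conjecture for a suitable family of abelian subextensions, exploiting the defining property of monomial groups that every irreducible $\mathbb{C}$-character of $G$ has the form $\Ind_{H}^{G}\psi$ for some subgroup $H\leq G$ and some linear character $\psi$ of $H$. First I would work componentwise: via the Wedderburn decomposition $\zeta(\mathbb{Q}[G])=\prod_{\chi}\mathbb{Q}(\chi)$, where $\chi$ runs through $G$-conjugacy classes of irreducible characters, the condition $w_{K}\theta_{K/k,S}\in\zeta(\Lambda^{\prime})$ is tested character by character, and the annihilation requirement $x\mathfrak{A}_{S}w_{K}\theta_{K/k,S}\subset\Ann_{\mathbb{Z}[G]}(Cl(K))$ can likewise be attacked componentwise after extending scalars to $\Lambda^{\prime}$.

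For each irreducible character $\chi=\Ind_{H}^{G}\psi$ with $\psi$ linear, the inductivity of Artin $L$-series $L_{S}(s,\chi,K/k)=L_{S_{H}}(s,\psi,K/K^{H})$, where $S_{H}$ denotes the set of places of $K^{H}$ above those in $S$, implies that the $\chi$-component of $\theta_{K/k,S}$ coincides, up to the standard Euler factors attached to $\phi_{\mathfrak{P}}$, with the $\psi$-component of $\theta_{K/K^{H},S_{H}}$. Since $\psi$ factors through the abelian quotient $H/\ker\psi$, its contribution is governed by the abelian CM-subextension $K^{\ker\psi}/K^{H}$, to which the abelian weak Brumer-Stark conjecture of Nickel applies and yields both the anti-unit $\alpha_{\chi}$ and annihilation of the $\psi$-component of $Cl(K^{\ker\psi})$. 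The sought statement over $K$ is then recovered by pulling these annihilation relations back along the natural inflation and conorm maps; the central conductor $\mathfrak{F}(G)$ is designed precisely to absorb the discrepancy coming from the failure of the reduced norm to take integral values on $\mathbb{Z}[G]$, which is exactly the role of the auxiliary factor $x\in\mathfrak{F}(\mathbb{Z}[G])$ appearing in the statement.

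The main obstacle will be the assembly of the character-by-character data $\{\alpha_{\chi}\}$ into a single global anti-unit $\alpha\in K^{*}$ satisfying $\mathfrak{A}^{xw_{K}\theta_{K/k,S}}=(\alpha)$ as an honest identity of fractional ideals of $K$, together with the $(S,T)$-compatibility $\alpha^{z\delta_{T}}=\alpha_{T}^{zw_{K}}$ uniformly in $T$. Each $\alpha_{\chi}$ is only defined up to Galois conjugation and modulo norms from $K/K^{\ker\psi}$, so rigidifying them into an element of $K^{*}$ requires a Hasse-principle-type gluing argument, most naturally phrased in terms of a non-abelian Stark-type descent. A secondary difficulty is that $K^{\ker\psi}/k$ need not itself be abelian; only $K^{\ker\psi}/K^{H}$ is. Propagating the annihilation statement down to a statement over $k$ therefore requires a genuine descent through the (possibly non-abelian) tower $K^{H}/k$, and this is the point where the specific monomial groups $D_{4p}$, $Q_{2^{n+2}}$ and $A_{4}\times\mathbb{Z}/2\mathbb{Z}$ treated in $\S\ref{sec:appl}$ become tractable: their subgroup lattices are sufficiently transparent that the descent, combined with vanishing of Iwasawa $\mu$-invariants at the relevant primes $l$, can be carried out explicitly.
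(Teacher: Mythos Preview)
The statement you have been asked to prove is Conjecture~\ref{conj:brumer-stark0}, and the paper does \emph{not} prove it: it is stated as a conjecture and remains one. What the paper actually proves is the reduction result Theorem~\ref{thm:brumer-stark} (announced as Theorem~\ref{thm:brumer-stark0}): \emph{if} the abelian Brumer--Stark conjecture holds for the explicit list $\mathbb{K}$ of abelian subextensions, \emph{then} the weak non-abelian Brumer--Stark conjecture holds for $K/k$. Your proposal is really an outline of that reduction theorem, not of the conjecture itself, and should be read as such.

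With that understood, your overall strategy matches the paper's: decompose over irreducible characters, use inductivity of Artin $L$-functions to pass from $\chi=\Ind_{H}^{G}\phi$ to the abelian subextension $K^{\ker\phi}/K^{H}$, and invoke the abelian conjecture there. But the two obstacles you flag are overstated, and your proposed resolutions diverge from what the paper does.

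First, the assembly of the $\alpha_{\chi}$ into a single $\alpha\in K^{*}$ requires no Hasse-principle or Stark-type descent. The paper simply sets $\alpha:=\prod_{\chi}\alpha_{\chi}$. This works because any $x\in\mathfrak{F}(\mathbb{Z}[G])$ decomposes as $x=\sum_{\chi}\sum_{\sigma}x_{\chi}^{\sigma}\pr_{\chi^{\sigma}}$ with each block $\sum_{\sigma}x_{\chi}^{\sigma}\pr_{\chi^{\sigma}}$ already lying in $\mathbb{Z}[G]$ (by Jacobinski's formula~(\ref{eq:cond})), so $\mathfrak{A}^{xw_{K}\theta_{K/k,S}}$ is literally the product of the ideals $\mathfrak{A}^{(\sum_{\sigma}x_{\chi}^{\sigma}\pr_{\chi^{\sigma}})w_{K}\theta_{K/k,S}}$ over $\chi$, and each factor is $(\alpha_{\chi})$. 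The orthogonality of the $e_{\chi}$ makes the gluing trivial.

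Second, there is no ``descent through the (possibly non-abelian) tower $K^{H}/k$''. The key computational device is Lemma~\ref{lem:conductor}: the block $\sum_{\sigma}x_{\chi_i}^{\sigma}\pr_{\chi_i^{\sigma}}$ rewrites as a sum of $\pr_{\phi^{g}}$'s and hence lies in $\mathfrak{F}(\mathbb{Z}[H_i])\subset\mathbb{Z}[H_i]\subset\mathbb{Z}[G]$. Combined with the identity $\pr_{\phi_{i,j}}=\pr_{\phi_{i,j}'}\cdot\Norm_{K/K_{i,j}}$, this pulls the abelian annihilation of $Cl(K_{i,j})$ directly up to an annihilation of $Cl(K)$ by an element of $\mathbb{Z}[G]$. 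No further descent to $k$ is needed.

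Finally, your invocation of ``vanishing of Iwasawa $\mu$-invariants'' is foreign to the paper's method. A central point of the paper (see Remark~1.10(4)) is that the reduction Theorem~\ref{thm:brumer-stark} and the applications in \S\ref{sec:appl} use only the analytic class number formula, not Iwasawa theory or the strong Stark conjecture.
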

\noindent 
In \cite{Nia},\ Nickel showed that for any odd prime $p$,\ the ``$p$-part" of the above two conjectures hold if no prime above $p$ splits in $K/K^{+}$ whenever $K^{cl} \subset (K^{cl})^{+}(\zeta_p)$,\ where the superscripts $^{+}$ and $^{cl}$ mean the maximal real subfield and  the Galois closure over $\mathbb{Q}$ respectively and $\zeta_p$ is a primitive complex $p$-th root of unity.\ This result is an improvement of the result in \cite{BJ} stated above and also deduced from  ``the Strong Stark conjecture" at $p$.\ 
%%Theorem \ref{thm:di} and \ref{thm:qua} also contain this result in the case the Galois group is isomorphic to the dihedral group of order $4p$ or the generalized quaternion group of order $2^{n+2}$.\ In particular,\ we prove  the $2$-part of the conjecture which he excludes.\ 
\par
In this paper,\ we study these weak conjectures by a direct approach.\ Although the known results which we have seen are proven via the non-commutative Iwasawa main conjecture or the strong Stark conjecture,\ we attack these weak conjectures in a different way,\ that is,\ we deduce the weak non-abelian  conjectures from abelian ones.\  We recall that a finite group $G$ is called {\it monomial} if each of the irreducible characters of $G$ is induced by a $1$-dimensional character of a certain subgroup.\  
Now our main theorems are the following:
\begin{thm}\label{thm:brumer0}Let $p$ be a rational prime (not necessarily odd) and $S$ a finite set of places of $k$ which contains all infinite places.\ We assume $G$ is monomial.\ If Brumer's conjecture (resp. the $p$-part of Brumer's conjecture) is true for an explicit list of abelian subextensions of $K/k$ (for the details of this list,\ see the list (\ref{eq:list}) in \S \ref{sec:main}), the weak non-abelian Brumer conjecture  (resp. the $p$-part of the weak non-abelian Brumer conjecture) is true for $K/k$ and $S$.\ 
\end{thm}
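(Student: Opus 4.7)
The plan is to exploit monomiality of $G$ in order to reduce the non-abelian statements character-by-character to the (assumed) abelian Brumer conjecture for a finite collection of intermediate CM-extensions. For each irreducible $\chi \in \Irr(G)$, write $\chi = \Ind_{H_\chi}^{G} \psi_\chi$ with $\psi_\chi$ a $1$-dimensional character of a subgroup $H_\chi \leq G$. Since $\psi_\chi$ is linear, its kernel $N_\chi$ is normal in $H_\chi$ with cyclic quotient, so setting $F_\chi := K^{H_\chi}$ and $L_\chi := K^{N_\chi}$ produces an \emph{abelian} CM-subextension $L_\chi/F_\chi$ of $K/k$. I expect the list (\ref{eq:list}) to consist (essentially) of these $L_\chi/F_\chi$ as $\chi$ ranges over a set of representatives of the $\mathrm{Gal}(\overline{\mathbb{Q}}/\mathbb{Q})$-orbits of $\Irr(G)$.

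For the first assertion $\mathfrak{A}_S \theta_S \subset \zeta(\Lambda^{\prime})$, I would decompose $\zeta(\mathbb{Q}[G]) \cong \prod_\chi \mathbb{Q}(\chi)$ and work componentwise. By Brauer's inductivity of Artin $L$-functions, the $\chi$-component of $\theta_{K/k,S}$ coincides, up to explicit Euler factors at places of $F_\chi$ above $S$, with the $\psi_\chi$-component of the abelian Stickelberger element attached to $L_\chi/F_\chi$ and the pullback of $S$ to $F_\chi$. The unconditional integrality result of \cite{DR,Ba,Ca} for the abelian extension $L_\chi/F_\chi$ then places $\psi_\chi(\theta_{L_\chi/F_\chi,\cdot})$ inside the appropriate abelian group ring after multiplication by the relevant annihilator of $\mu(L_\chi)$. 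Rewriting the latter annihilator through the reduced norm of a $\delta_T$ for $K/k$, and collecting over $\chi$, yields the first bullet.

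For the annihilation bullet, fix $x \in \mathfrak{F}(\mathbb{Z}[G])$ and $a \in \mathfrak{A}_S$. By the first step $a\theta_S \in \zeta(\Lambda^{\prime})$, and the definition of the central conductor $\mathfrak{F}(\mathbb{Z}[G])$ guarantees $xa\theta_S \in \mathbb{Z}[G]$, so the product acts on $Cl(K)$. Annihilation may be verified after rationalization on each $\chi$-isotypic piece; on the piece attached to $\chi = \Ind_{H_\chi}^{G}\psi_\chi$, Brumer's conjecture for $L_\chi/F_\chi$ supplies an annihilator of $Cl(L_\chi)$ by $\Ann_{\mathbb{Z}[\Gal(L_\chi/F_\chi)]}(\mu(L_\chi))\cdot \theta_{L_\chi/F_\chi,\cdot}$. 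These abelian annihilators are transported up to annihilators of the $\chi$-component of $Cl(K)$ via the norm $N_{K/L_\chi} \colon Cl(K) \to Cl(L_\chi)$ combined with induction from $\mathbb{Z}[\Gal(L_\chi/F_\chi)]$-modules to $\mathbb{Z}[G]$-modules; the product $x \cdot \mathrm{Ind}(\cdots)$ then lies in the required $\mathbb{Z}[G]$-annihilator ideal. The $p$-part version of the theorem follows by localizing each step at $p$.

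The main obstacle is the integral assembly step: the induced annihilators live a priori in $\zeta(\Lambda^{\prime})$ rather than in $\mathbb{Z}[G]$, and it is precisely the central-conductor factor $x \in \mathfrak{F}(\mathbb{Z}[G])$ that clears denominators so that the character-by-character annihilation upgrades to a genuine $\mathbb{Z}[G]$-annihilation of $Cl(K)$. Making this precise requires careful bookkeeping of the places in $S$ (inertia, splitting, and ramification in each intermediate extension $L_\chi/F_\chi$) and of the corresponding Euler factors hidden in the $\delta_T$, as well as a verification that the minimal list of abelian subextensions one is forced to input already controls every irreducible $\chi$ — this is what dictates the explicit shape of (\ref{eq:list}).
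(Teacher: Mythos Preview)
Your overall strategy matches the paper's: decompose character-by-character via monomiality, use inductivity of Artin $L$-functions to identify each $\chi$-component of $\theta_{K/k,S}$ with a value of an abelian Stickelberger element, and then use the central conductor to assemble integral annihilators. The first bullet is handled exactly as you describe (this is Lemma~\ref{lem:stic}).

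However, your description of the annihilation step contains a real imprecision that would block a proof. You write that ``annihilation may be verified after rationalization on each $\chi$-isotypic piece'' of $Cl(K)$; but $Cl(K)$ is finite, so rationalizing kills it, and over $\mathbb{Z}$ there is no isotypic decomposition to work with. The paper does \emph{not} decompose the module $Cl(K)$; it decomposes the \emph{element} $x\in\mathfrak{F}(\mathbb{Z}[G])$. By Jacobinski's formula~(\ref{eq:cond}), each Galois-orbit piece $\sum_{\sigma} x_{\chi_i}^\sigma \pr_{\chi_i^\sigma}$ already lies in $\zeta(\mathbb{Z}[G])$, and the key technical point (Lemma~\ref{lem:conductor}, which rewrites $\pr_{\chi_i}$ as a sum of $\pr_{\phi}$'s over the inducing characters $\phi$ of $H_i$) shows that this piece in fact lies in $\mathfrak{F}(\mathbb{Z}[H_i])\subset\mathbb{Z}[H_i]$. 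Once you have this, the abelian Brumer conjecture for $K_{i,j}/k_i$ gives an annihilator of $Cl(K_{i,j})$, and the identity $\pr_{\phi_{i,j}} = \pr_{\phi_{i,j}'}\cdot \Norm_{K/K_{i,j}}$ lifts it to an element of $\mathbb{Z}[H_i]\subset\mathbb{Z}[G]$ that annihilates \emph{all} of $Cl(K)$ (not merely a ``$\chi$-piece''). Summing over $i$ then yields $x\,\theta_{K/k}^T\in\Ann_{\mathbb{Z}[G]}(Cl(K))$.

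So your last paragraph correctly flags the obstacle, but the resolution is not ``clearing denominators'' in an isotypic decomposition: it is the observation that the $\chi$-components of $x$ descend to the subgroup $H_i$ and sit inside $\mathfrak{F}(\mathbb{Z}[H_i])$. Without Lemma~\ref{lem:conductor} (or an equivalent computation), the passage from ``abelian annihilator of $Cl(L_\chi)$'' to ``integral $\mathbb{Z}[G]$-annihilator of $Cl(K)$'' does not go through.
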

\begin{thm}\label{thm:brumer-stark0}
The statement of Theorem \ref{thm:brumer0} holds,\ with ``Brumer'' replaced by ``Brumer-Stark'' throughout.\  
\end{thm}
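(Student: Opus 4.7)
The plan is to parallel the proof of Theorem \ref{thm:brumer0} step by step, adding the extra bookkeeping required for the anti-unit clause and the $T$-compatibility clause of the weak Brumer--Stark conjecture. Since $G$ is monomial, for each irreducible character $\chi$ of $G$ we fix a subgroup $H_\chi \leq G$ and a linear character $\psi_\chi$ of $H_\chi$ with $\chi = \Ind_{H_\chi}^{G}\psi_\chi$. Setting $N_\chi := \ker \psi_\chi$ and $L_\chi := K^{N_\chi}$, $F_\chi := k^{H_\chi}$, the abelian CM-extension $L_\chi / F_\chi$ (together with an appropriate auxiliary set of places $S_\chi$ of $F_\chi$) is then a member of the explicit list (\ref{eq:list}) used in Theorem \ref{thm:brumer0}. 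The first step is to translate the two bullet items of Conjecture \ref{conj:brumer-stark0} into statements that can be checked character-by-character: under the natural identification $\zeta(\Lambda') \simeq \prod_{\chi} \mathcal{O}_{\chi}$, the $\chi$-component of $w_{K}\theta_{K/k,S}$ is, up to the inductivity of Artin $L$-values, essentially $w_{L_\chi}\theta_{L_\chi/F_\chi,S_\chi}$, so the integrality clause $w_K \theta_{K/k,S}\in \zeta(\Lambda')$ drops out from the abelian fact that $w_{L_\chi}\theta_{L_\chi/F_\chi,S_\chi}\in \mathbb{Z}[\Gal(L_\chi/F_\chi)]$, exactly as in the Brumer case.

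For the second bullet, fix a fractional ideal $\mathfrak{A}$ of $K$ and an element $x \in \mathfrak{F}(\mathbb{Z}[G])$. The central conductor $\mathfrak{F}(\mathbb{Z}[G])$ for a monomial group is known to be generated, character-by-character, by images of induction maps from the subgroups $H_\chi$; thus we can write $x$ in a form compatible with the decomposition above. For each $\chi$, apply the Brumer--Stark conjecture for $L_\chi/F_\chi$ to the ideal $\mathfrak{A}_\chi := N_{K/L_\chi}\mathfrak{A}$: this yields an anti-unit $\beta_\chi \in L_\chi^{*}$ with $\mathfrak{A}_\chi^{w_{L_\chi}\theta_{L_\chi/F_\chi,S_\chi}} = (\beta_\chi)$ and with $L_\chi(\beta_\chi^{1/w_{L_\chi}})/F_\chi$ abelian. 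Pulling the $\beta_\chi$ back to $K$ via the inclusion $L_\chi \hookrightarrow K$ and combining them through the $\chi$-components of $x$ produces the desired $\alpha = \alpha(\mathfrak{A},S,x)\in K^{*}$. The anti-unit property $\alpha^{1+j} = 1$ is inherited from the anti-unit property of each $\beta_\chi$, since the complex conjugation $j \in G$ restricts to the complex conjugation of each CM-subextension $L_\chi$.

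For the final clause, fix a finite set $T$ of places of $k$ with $Hyp(S\cup S_\alpha,T)$ satisfied. Let $T_\chi$ denote the pullback of $T$ to $F_\chi$; then $Hyp(S_\chi\cup S_{\beta_\chi},T_\chi)$ holds. The abelian Brumer--Stark conjecture applied to $L_\chi/F_\chi$ provides $\beta_{\chi,T_\chi}\in E^{T_\chi}_{S_{\beta_\chi}}(L_\chi)$ with $\beta_\chi^{\delta_{T_\chi}} = \beta_{\chi,T_\chi}^{w_{L_\chi}}$. The compatibility of reduced norms with induction shows that the image in $\zeta(\mathbb{Z}[G])$ of $\delta_T$ matches, in each $\chi$-component, with $\delta_{T_\chi}$ (up to a factor handled by the $E^{T}$-torsion-freeness hypothesis). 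Combining the $\beta_{\chi,T_\chi}$ by the same $\chi$-weighted procedure as in the previous step produces $\alpha_T$ and verifies $\alpha^{z\delta_T} = \alpha_T^{zw_K}$ for every $z\in\mathfrak{F}(\mathbb{Z}[G])$. The $p$-part statement is obtained by running the same argument after tensoring with $\mathbb{Z}_p$; nothing in the character-theoretic reductions is sensitive to localization.

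The main obstacle I expect is the fourth step: ensuring that the element $\alpha$ assembled from the $\beta_\chi$ genuinely lies in $K^{*}$ (as opposed to in the maximal order $\Lambda'$) and not merely satisfies the ideal identity after multiplication by $x$. This is exactly the role of the central conductor $\mathfrak{F}(\mathbb{Z}[G])$, and verifying that multiplication by an arbitrary $x\in\mathfrak{F}(\mathbb{Z}[G])$ does lift the formal $\chi$-by-$\chi$ construction to an honest element of $K^{*}$ will require the monomial description of $\mathfrak{F}(\mathbb{Z}[G])$ together with a careful compatibility check with the anti-unit and $T$-integrality conditions, in the same spirit as the corresponding step in the proof of Theorem \ref{thm:brumer0}.
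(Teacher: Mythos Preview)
Your proposal is correct and follows essentially the same strategy as the paper's proof: reduce to the abelian subextensions $L_\chi/F_\chi$ via monomiality, apply the abelian Brumer--Stark conjecture to the norm ideal $N_{K/L_\chi}\mathfrak{A}$, and reassemble the resulting anti-units using the character-by-character description of $\mathfrak{F}(\mathbb{Z}[G])$ provided by Lemma~\ref{lem:conductor}. Two small points of precision you should sharpen: first, the $\chi$-component of $w_K\theta_{K/k,S}$ is $|\mu(K)|^{\chi(1)}L_S(0,\bar\chi)$, not $w_{L_\chi}\theta_{L_\chi/F_\chi,S_\chi}$ on the nose---the paper absorbs the discrepancy into the integer factor $c_\phi=|\mu(K)|^{\chi(1)}/|\mu(L_\chi)|$; second, the ``obstacle'' you flag is resolved not by showing $\alpha$ lands in $K^*$ rather than $\Lambda'$, but by checking that the exponent $\sum_g x_\chi^g\pr_{\phi^g}$ actually lies in $\mathbb{Z}[H_\chi]$ (so that raising $\beta_\chi\in K^*$ to it makes sense), which is precisely the content of Lemma~\ref{lem:conductor}.
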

\noindent
Concerning the set $S$ of places, we stress here that $S$ does not have to contain places which ramify in $K/k$.\ Hence,\ if we assume Brumer's conjecture and the Brumer-Stark conjecture for abelian extensions,\ we get stronger annihilation results than Nickel's formulation.\ As far as the author knows,\ this observation on the set $S$ has not been made before.\ The possibility to remove the ramifying places is revealed because of the comparison between conjectures for non-abelian extensions and those for abelian extensions.\ 
\par 
Let $p$ be an odd prime and $n$ be a non-zero natural number.\ We denote by $D_{4p}$ the dihedral group of order $4p$,\ by $Q_{2^{n+2}}$ the generalized quaternion group of order $2^{n+2}$ (so $Q_8$ is Hamilton's usual group of quaternions) and by $A_4$ the alternating  group on $4$ letters.\ These three groups are monomial.\  In \S $\ref{sec:appl}$,\ we apply the above two theorems to the case $G$ is isomorphic to $D_{4p}$,\ $Q_{2^{n+2}}$ or $\mathbb{Z}/2\mathbb{Z} \times A_4$ and prove the following three theorems (\S \ref{sec:appl},\ Theorem \ref{thm:brumer1},\ \ref{thm:brumer2} and \ref{thm:brumer3}).\ 
\begin{thm}\label{thm:di}
Let $K/k$ be a finite Galois CM-extension whose Galois group is isomorphic to $D_{4p}$ and $S$ be a finite set of places of $k$ which contains all infinite places.\  Then  

\begin{description}
\item[(1)] the $p$-part of the non-abelian Brumer and Brumer-Stark conjectures are true for $K/k$ and $S$,\ 
\item[(2)] for each prime $l$ ($l \not = p$,\ $l$ can be $2$) which does not split in $\mathbb{Q}(\zeta_p)$,\  the $l$-part of the weak non-abelian Brumer and Brumer-Stark conjectures are true for $K/k$ and $S$.\ 
\end{description}
\end{thm}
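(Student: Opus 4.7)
The plan is to apply the reduction given by Theorems \ref{thm:brumer0} and \ref{thm:brumer-stark0} to $G\cong D_{4p}$, verify the abelian inputs that result, and at the prime $p$ strengthen the weak non-abelian conclusion to the full one. The first step is a character analysis of $D_{4p}=\langle r,s\mid r^{2p}=s^{2}=1,\ srs^{-1}=r^{-1}\rangle$: since $[G,G]=\langle r^{2}\rangle$ has order $p$, one has $G^{\mathrm{ab}}\cong(\mathbb{Z}/2\mathbb{Z})^{2}$, yielding four linear characters --- each factoring through some quadratic quotient of $G^{\mathrm{ab}}$ --- and $p-1$ irreducible characters of degree two, each induced from a faithful linear character of the cyclic subgroup $\langle r\rangle$ of order $2p$. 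Thus the list (\ref{eq:list}) of abelian subextensions required by the main theorems reduces, for $D_{4p}$, to the two CM quadratic subextensions of $K^{[G,G]}/k$ --- which account for the non-trivial linear characters of $G$ whose $L$-function is not trivially zero --- together with the cyclic CM-extension $K/K^{\langle r\rangle}$ of degree $2p$.

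The CM quadratic subextensions are unconditionally covered by Tate's theorem \cite[Case (c)]{Ta}, valid at every prime $l$. The cyclic degree-$2p$ piece is supplied unconditionally by the Greither--Roblot--Tangedal theorem \cite{GRT04} on cyclic CM-extensions of degree $2p$; its range of validity --- at the prime $p$ and at any prime that does not split in $\mathbb{Q}(\zeta_{p})$ --- matches exactly the hypothesis imposed on $l$ in Theorem \ref{thm:di}. Substituting into Theorems \ref{thm:brumer0} and \ref{thm:brumer-stark0} immediately yields part (2) in full, and the weak form of the desired statement at $p$ needed for part (1).

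It remains to promote the $p$-part of the weak statement to the full non-abelian conjecture. Here I would use that the Sylow $p$-subgroup $\langle r^{2}\rangle$ of $G$ is normal cyclic with coprime quotient of order $4$, together with the direct-product decomposition $D_{4p}\cong\mathbb{Z}/2\mathbb{Z}\times D_{2p}$ in which the $\mathbb{Z}/2$ factor is generated by the (central) complex conjugation $r^{p}$. This should let me work block by block in the Wedderburn decomposition of $\mathbb{Q}_{p}[G]$ and transfer the $\mathbb{Z}_{p}[G]$-annihilation obtained from the abelian inputs into each block without the spurious factor $x\in\mathfrak{F}(\mathbb{Z}[G])$ that appears in the generic monomial reduction.

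The main obstacle will be exactly this block-by-block promotion at $p$. Since $\mathbb{Z}_{p}[D_{4p}]$ is not a maximal order --- precisely because the cyclic Sylow piece $\mathbb{Z}_{p}[\langle r^{2}\rangle]$ is not --- removing the central-conductor factor $x$ is a genuine computation involving the explicit Wedderburn blocks (matrix rings over orders in $\mathbb{Q}_{p}(\zeta_{p})^{+}$ on the two-dimensional side) and the precise form of the reduced norms of $\theta_{K/k,S}$ and $\mathfrak{A}_{S}$ on each. A subsidiary difficulty, which I expect to yield to a standard inflation/restriction argument, is to assemble the two separate applications of Tate's theorem into the full annihilation result by $\mathbb{Z}[G^{\mathrm{ab}}]$ that the list genuinely requires.
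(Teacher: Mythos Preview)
Your strategy matches the paper's. Two points deserve comment.

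First, \cite{GRT04} does not give what you claim: its theorems are for $p=3$, and even there proceed through case-by-case classifications, so you cannot simply cite it for the cyclic degree-$2p$ input at general $p$. This is a genuine gap in your proposal. The paper closes it by observing that the reduction theorems do not in fact require the full Brumer--Stark conjecture for $K/K^{\langle r\rangle}$ --- only the $\chi$-component annihilation statements (\ref{eq:weakann3})--(\ref{eq:weakann4}) --- and then proving precisely that weaker statement (Proposition \ref{cl:6p}) for general odd $p$. The argument adapts the analytic-class-number-formula method of \cite[Propositions 2.1--2.2 and Theorem 3.2]{GRT04}: one bounds $|(\sum_{m}x_{\chi_{m+4}}\pr_{\chi_{m+4}})A_K|$ by $|A_K|/|A_E|$, identifies the latter with $N_{\mathbb{Q}(\zeta_p)/\mathbb{Q}}(L(0,\phi))$ up to the root-of-unity factor, and uses the hypothesis that $l$ does not split in $\mathbb{Q}(\zeta_p)$ to see that the relevant piece is a $\mathbb{Z}_l[\zeta_p]$-module of order at most $[\mathbb{Z}_l[\zeta_p]:(\overline{\theta}_{K/F})]$, hence annihilated by $\theta_{K/F}$. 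Because only the weaker statement is needed, the classifications in \cite{GRT04} can be dispensed with.

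Second, the block-by-block conductor computation you anticipate for the $p$-part promotion is unnecessary. The paper's route (Lemma \ref{lem:relation3}) uses exactly the decomposition $D_{4p}\cong \mathbb{Z}/2\mathbb{Z}\times D_{2p}$ you noted, together with the result \cite[Example 6.22]{JN} that $\mathcal{I}(\mathbb{Z}_p[D_{2p}])=\zeta(\Lambda'_{D_{2p}})$, to conclude $\mathcal{I}(\mathbb{Z}_p[D_{4p}])=\zeta(\Lambda')$. Since every irreducible character of $D_{4p}$ has degree $1$ or $2$, hence prime to the odd prime $p$, Proposition \ref{prop:conductor} gives $\mathcal{H}(\mathbb{Z}_p[G])=\mathfrak{F}(\mathbb{Z}_p[G])$, and Lemma \ref{lem:relation2} then shows that the weak and full conjectures at $p$ are equivalent. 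No direct manipulation of Wedderburn blocks or reduced norms is required.
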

\begin{thm}\label{thm:qua}
Let $K/k$ be a finite Galois CM-extension whose Galois group is isomorphic to $Q_{2^{n+2}}$ and $S$ be a finite set of places of $k$ which contains all infinite places.\ Then the $2$-part of the weak non-abelian Brumer and Brumer-Stark conjectures are true for $K/k$ and $S$,\ 
\end{thm}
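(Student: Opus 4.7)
The plan is to apply Theorem~\ref{thm:brumer0} and Theorem~\ref{thm:brumer-stark0} with $p=2$ and $G \cong Q_{2^{n+2}}$. The group $Q_{2^{n+2}}$ is monomial: its four linear characters factor through the abelianization $G/[G,G] \cong (\mathbb{Z}/2)^2$, while each of the remaining $2^n-1$ irreducible characters is two-dimensional and induced from a non-self-conjugate one-dimensional character of the unique cyclic subgroup $C=\langle a\rangle$ of index $2$. Both reduction theorems thus apply, and the $2$-part of the weak non-abelian Brumer and Brumer-Stark conjectures for $K/k$ and $S$ is reduced to the $2$-part of the abelian Brumer and Brumer-Stark conjectures for the abelian subextensions in list~(\ref{eq:list}).

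Specializing list~(\ref{eq:list}) to $G\cong Q_{2^{n+2}}$, every abelian subextension that needs to be checked is a CM-extension of $2$-power degree: namely the biquadratic extension $K^{[G,G]}/k$, together with the cyclic $2$-power intermediate extensions $K^{\ker\psi}/K^C$ coming from those characters $\psi$ of $C$ that induce irreducibly. For each of these I would verify the $2$-part of the abelian Brumer-Stark conjecture (which implies the abelian Brumer conjecture). The quadratic pieces are dispatched unconditionally by Tate's theorem on relative quadratic CM-extensions \cite[\S3, case~(c)]{Ta}, and the biquadratic extension $K^{[G,G]}/k$ reduces to its three quadratic subfields after averaging over the Klein-four Galois group. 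Tate's case~(e) in \cite{Ta} covers cyclic CM-extensions of relative degree $4$, which already arise at $n=1$. For cyclic pieces of $2$-power degree $\geq 8$ that appear when $n \geq 2$, I would proceed by a descent argument within the cyclic $2$-tower over $K^C$, exploiting the functoriality of Stickelberger elements under Galois restriction together with the base cases just described.

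The main obstacle is exactly this last step: establishing the $2$-part of the abelian Brumer-Stark conjecture for cyclic CM-extensions of $2$-power relative degree $\geq 4$ not already covered by Tate. The prime $2$ is the ``exceptional'' prime in the frameworks of Nickel and of Greither-Popescu, so their general approaches cannot be applied as a black box, and an unconditional tower-descent argument must be supplied by hand. Once such an argument is in place, I would finally verify that the conclusion indeed holds without Nickel's hypothesis that $S$ contains all ramifying places; this is automatic, since the abelian Brumer and Brumer-Stark conjectures as invoked here already permit $S$ that omits ramifying primes, and this stronger form is preserved by the reduction of Theorems~\ref{thm:brumer0} and~\ref{thm:brumer-stark0}.
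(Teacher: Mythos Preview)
Your reduction to abelian subextensions via Theorems~\ref{thm:brumer0} and~\ref{thm:brumer-stark0} is correct, but the list of subextensions you propose to check, and your plan for the hard one, both miss the mark.

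First, the list. In $Q_{2^{n+2}}$ the complex conjugation is $j=x^{2^n}$, which generates the center. Every linear character, and every $2$-dimensional character that factors through $Q_{2^{n+2}}/\langle j\rangle\cong D_{2^{n+1}}$, is therefore \emph{even}. The only odd irreducible characters are the $2$-dimensional ones induced from \emph{faithful} characters $\phi^m$ of $C=\langle x\rangle$; for these $\ker\phi^m=1$, so the corresponding abelian subextension is always the single cyclic extension $K/K^{C}$ of degree $2^{n+1}$. The biquadratic piece $K^{[G,G]}/k$ and the lower-degree cyclic layers you list all come from even characters, so their contribution to $\theta_{K/k,S}$ is zero and there is nothing to verify there. (Your appeal to Tate's case~(e) is also misplaced: that result concerns a degree-$4$ extension sitting inside a non-abelian degree-$8$ extension over the \emph{same} base, which is not the configuration $K/K^{C}$.)

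Second, the real obstacle. What remains is a cyclic CM-extension $K/K^{C}$ of $2$-power degree $2^{n+1}$, and your proposed ``tower descent'' is not a proof. The paper does something quite different and does \emph{not} attempt the full abelian Brumer--Stark conjecture here. It observes (see the proof of Theorem~\ref{thm:brumer-stark}) that only the weaker annihilation statements~(\ref{eq:weakann3}) and~(\ref{eq:weakann4}) are needed, i.e.\ annihilation after multiplying by the conductor element $\sum_{m\in M^{+}} x_{\chi_m}\pr_{\chi_m}$. This weaker target is reached by three ingredients you do not mention: (i) the quaternion relation $yxy^{-1}=x^{-1}$ forces $\mu(K)=\{\pm1\}$ (Lemma~\ref{lem:roots}); (ii) Sands's formula expresses $|A_K|$ via $\omega_K\theta_{K/K^{+}}$ with an explicit power of $2$ depending on $[K^{+}:\mathbb{Q}]$; (iii) the bound $[K^{+}:\mathbb{Q}]\geq 2^{n+1}$, which holds precisely because $K/K^{C}$ sits inside a $Q_{2^{n+2}}$-extension of $k$, produces enough extra factors of $2$ to show that $\tfrac{1}{2}\overline{\theta}_{K/K^{C}}$ already annihilates the relevant $\mathbb{Z}_2[\zeta_{2^{n+1}}]$-module. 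This is the content of Proposition~\ref{thm:brumerofcyclic}, and it is the step your outline is missing.
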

\begin{thm}\label{thm:alt}
Let $K/k$ be a finite Galois CM-extension whose Galois group is isomorphic to $\mathbb{Z}/2\mathbb{Z} \times A_4$ and $S$ be a finite set of places of $k$ which contains all infinite places.\ Then 
\begin{description}
\item[(1)] the $2$-part and the 3-part of the weak non-abelian Brumer and Brumer-Stark conjectures are true for $K/k$ and $S$,\ 
\item[(2)] for each odd prime $l$ apart from $3$ which does not split in $\mathbb{Q}(\zeta_{3})$,\  the $l$-part of the non-abelian Brumer and Brumer-Stark conjectures are true for $K/k$ and $S$.\ 
\end{description}
\end{thm}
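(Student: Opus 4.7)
My plan is to derive Theorem \ref{thm:alt} from Theorems \ref{thm:brumer0} and \ref{thm:brumer-stark0} in the same spirit as the proofs of Theorems \ref{thm:di} and \ref{thm:qua}. Set $G=\mathbb{Z}/2\mathbb{Z}\times A_4$ and $C=\mathbb{Z}/2\mathbb{Z}$. Since $A_4$ is monomial (its unique non-linear irreducible character is induced from a linear character of the normal Klein four subgroup $V_4\triangleleft A_4$) and monomiality is preserved under direct products with abelian groups, $G$ is monomial; hence Theorems \ref{thm:brumer0} and \ref{thm:brumer-stark0} apply and reduce the $l$-part of the weak non-abelian Brumer (respectively Brumer-Stark) conjecture for $K/k$ to the $l$-part of the abelian Brumer (respectively Brumer-Stark) conjecture for each abelian subextension appearing in the explicit list \eqref{eq:list}.

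I would next unfold the list \eqref{eq:list} in our setting. The irreducible characters of $G$ consist of six linear characters factoring through the abelianisation $G^{\mathrm{ab}}=C\times\mathbb{Z}/3\mathbb{Z}$, together with two three-dimensional characters, each induced from a linear character $\psi$ of the normal index-$3$ subgroup $H=C\times V_4\cong(\mathbb{Z}/2\mathbb{Z})^3$ whose $G/H$-orbit has length three. Correspondingly \eqref{eq:list} comprises (i) subextensions of the cyclic degree-$6$ CM-extension $K^{\{1\}\times V_4}/k$ (produced by the linear characters of $G$) and (ii) the quadratic abelian extensions $K^{\ker\psi}/K^{H}$ (produced by the two three-dimensional characters).

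The final step is to verify the appropriate abelian conjectures prime by prime. At $l=2$, the $2$-primary component of the abelian Brumer-Stark conjecture for each member of \eqref{eq:list} is supported on a single quadratic CM-subextension and is therefore Tate's theorem \cite[\S 3, case (c)]{Ta}; the abelian Brumer conjecture at $l=2$ follows. At $l=3$, only the cyclic degree-$6$ CM-extension $K^{\{1\}\times V_4}/k$ has degree divisible by $3$, and its $3$-part of Brumer-Stark is covered by the Greither-Roblot-Tangedal theorem \cite{GRT04} applied with $p=3$. For an odd prime $l\neq 3$ with $l\not\equiv 1\bmod 3$, $l$ is coprime to $|G|=24$, so $\mathbb{Z}_l[G]$ is already a maximal $\mathbb{Z}_l$-order and the weak and full non-abelian conjectures at $l$ coincide; the non-splitting of $l$ in $\mathbb{Q}(\zeta_3)$ makes the $l$-adic character decomposition of each relevant abelian subextension behave uniformly, and one recovers the $l$-part of the abelian Brumer-Stark from classical character-by-character analytic inputs (or equivalently from Nickel's non-exceptional-prime result \cite{Ni10b}), whence Theorems \ref{thm:brumer0}-\ref{thm:brumer-stark0} give the full non-abelian conjecture at $l$ for $K/k$.

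The principal technical obstacle is the bookkeeping at $l=2$: the $2$-Sylow of $G$ is the non-cyclic group $(\mathbb{Z}/2\mathbb{Z})^3$ and several distinct quadratic abelian subextensions enter simultaneously from both families (i) and (ii). One must verify that Tate's theorem, applied separately to each, glues via the central conductor $\mathfrak{F}(\mathbb{Z}[G])$ into the claimed weak non-abelian annihilation for $K/k$; this gluing is precisely what Theorems \ref{thm:brumer0}-\ref{thm:brumer-stark0} deliver, so no $l$-adic cohomological input beyond Tate is required. A secondary point is that the hypothesis of \cite{GRT04} must be checked for the cyclic degree-$6$ subextension in question; in cases where it would fail, one can substitute the Greither-Popescu theorem \cite{GP} at the cost of enlarging $S$ to contain the primes above $3$, which is still consistent with the weak output asserted in Theorem \ref{thm:alt}(1).
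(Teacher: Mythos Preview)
Your overall architecture is right and matches the paper: $G=\mathbb{Z}/2\mathbb{Z}\times A_4$ is monomial, Theorems~\ref{thm:brumer0}--\ref{thm:brumer-stark0} reduce everything to abelian subextensions, and the list \eqref{eq:list} consists of quadratic extensions (handled by Tate) together with the single cyclic degree-$6$ CM-extension $K^{\{1\}\times V_4}/k$ coming from the pair of conjugate linear characters with values in $\mathbb{Q}(\zeta_3)$.

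The gap is in how you dispose of that degree-$6$ extension. You propose, at each prime, to invoke a \emph{full} abelian Brumer--Stark theorem: Tate at $l=2$ (via an unspecified reduction to a quadratic subextension), \cite{GRT04} at $l=3$, and \cite{Ni10b} or \cite{GP} at other $l$. None of these gives what the theorem claims. The result in \cite{GRT04} only covers \emph{certain} degree-$2p$ extensions (the ``classifications'' you would have to check can genuinely fail); \cite{GP} and \cite{Ni10b} both assume $\mu=0$, and \cite{GP} further forces $S$ to contain the primes above $l$, whereas Theorem~\ref{thm:alt} is asserted unconditionally and for every $S\supseteq S_\infty$. Your own closing caveat (``enlarge $S$ \dots still consistent with the weak output'') is therefore not correct: enlarging $S$ changes the statement being proved. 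Likewise the claim that at $l=2$ the degree-$6$ piece ``is supported on a single quadratic CM-subextension and is therefore Tate's theorem'' is not justified; the $2$-part of Brumer--Stark for $K^{\{1\}\times V_4}/k$ is a genuine statement about that degree-$6$ field.

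What the paper does instead---and this is the point you are missing---is exploit Remark~4.3(2): to feed Theorems~\ref{thm:brumer0}--\ref{thm:brumer-stark0} one does \emph{not} need the full abelian Brumer--Stark conjecture for the degree-$6$ subextension, only the weaker annihilation statements \eqref{eq:weakann3}--\eqref{eq:weakann4} for the specific conductor element $x_{\chi}\pr_{\chi}$ attached to the $\mathbb{Q}(\zeta_3)$-valued characters. The paper proves exactly this (Proposition~\ref{cl:6p2}, by the same argument as Proposition~\ref{cl:6p}) for every prime $l$ that does not split in $\mathbb{Q}(\zeta_3)$, using nothing beyond the analytic class number formula and the structure of $\mathbb{Z}_l[\zeta_3]$ as a DVR. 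This is unconditional, requires no hypothesis on $S$, and covers $l=2$, $l=3$, and all inert odd $l$ simultaneously. For part~(2) the upgrade from weak to full is then automatic because $l\nmid|G|$ makes $\mathbb{Z}_l[G]$ a maximal order (your observation here is correct).
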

\begin{rem}\ 
\begin{enumerate}
\item The above three theorems say that the set $S$ does not have to contain the ramifying places by contrast to the  Nickel's formulation.\ Hence,\ we give stronger results than Nickel's work in \cite{Nia}.\
\item  In $Q_{2^{n+2}}$ cases,\ $\theta_{K/k, S_{\infty} \cup S_{ram}}$ always coincides with $\theta_{K/k, S_{\infty}}$ because all the subgroups of $Q_{2^{n+2}}$ are normal and all the irreducible totally odd representations are faithful (hence the Euler factors corresponding to $S_{ram}$  are $1$).\ In $D_{4p}$ cases,\ we can find explicit examples in which $\theta_{K/k, S_{\infty} \cup S_{ram}}$ does not coincide with $\theta_{K/k,S_{\infty}}$.\ In \S \ref{sec:example},\ we give a concrete $D_{12}$ extension $K/\mathbb{Q}$ so that $\theta_{K/\mathbb{Q}, S_{\infty}}$ does not coincide with $\theta_{K/\mathbb{Q}, S_{\infty} \cup S_{ram}}$.\
\item Our results contain the $2$-part of conjectures which is excluded in  \cite{BJ} and \cite{Nia}.\ As far as the author knows,\ there are no results concerning  the $2$-part of conjectures.\ Hence our main theorems give the first evidence for the $2$-part of the conjectures.\
\item  We use only the analytic class number formula to prove the above three results  by contrast to known results which were proven via  the non-commutative Iwasawa main conjecture or the strong Stark conjecture.\
\end{enumerate}
\end{rem}
\par
I would like to express my sincere grateful to Masato Kurihara for his encouragement and helpful suggestions.\ His suggestion is the starting point of this work,\ and this work could not have been completed without his guidance.\ 
I would like to thank Andreas Nickel for his helpful comments on this paper.\ In particular,\  he indicated Lemma \ref{lem:relation3} in \S \ref{sec:conjectures} and pointed out some mistakes in a draft version of this paper.\
I would also like to thank David Burns for his helpful suggestions and comments,\ especially,\ his indication of the validity of Theorem \ref{thm:brumer3} and a generalization of this article (the author study only supersolvable extensions before his suggestion).\ 
Finally,\ I  am deeply grateful to the referee for his/her careful reading of an earlier version of this paper.\ He/She pointed out many mistakes of my English writing.\

 \section*{Notations}
  For any ring $A$,\ $\zeta(A)$ denotes the center of $A$ and $M_n(A)$ denotes the ring of matrices over $A$ for some $n \in \mathbb{N}$.\ For any number field $F$,\ $\mu(F)$ denotes the roots of unity and $Cl(F)$ denotes the ideal class group of $F$.\ 

\section{Preliminaries}
\subsection{Group rings and idempotents}\label{sec:groupalg}
Let $\mathbb{\overline{Q}}$ be an algebraic closure of $\mathbb{Q}$ and we fix an embedding $\overline{\mathbb{Q}} \hookrightarrow \mathbb{C}$.\ For any finite group $G$,\ $\Irr G$ denotes the set of all irreducible $\mathbb{C}$-valued characters of $G$.\ 
We put 
\[
e_\chi := \frac{\chi(1)}{|G|}\sum_{g \in G} \chi(g^{-1})g,\ \pr_{\chi} := \frac{|G|}{\chi(1)} e_\chi = \sum_{g \in G} \chi(g^{-1})g,\ \chi \in \Irr G.\ 
\]
Then the elements $e_\chi$ are orthogonal  central primitive idempotents of $\overline{\mathbb{Q}}[G]$ and we have the Wedderburn decomposition 
\[
 \mathbb{\overline{Q}}[G] = \bigoplus_{\chi \in \Irr G} \mathbb{\overline{Q}}[G] e_\chi \cong \bigoplus_{\chi \in \Irr G} M_{\chi(1)}(\mathbb{\overline{Q}}),\ \zeta(\mathbb{\overline{Q}}[G]) = \bigoplus_{\chi \in \Irr G} \mathbb{\overline{Q}} e_\chi \cong  \bigoplus_{\chi \in \Irr G} \mathbb{\overline{Q}}.\ 
\]
The above isomorphisms imply the isomorphism
\[
\zeta(\mathbb{Q}[G]) \cong  \bigoplus_{\chi \in \Irr G/\sim} \mathbb{Q}(\chi)
\]
where we put $\mathbb{Q}(\chi):=\mathbb{Q}(\chi(g):g \in G)$ and the direct sum runs over all irreducible characters modulo $\Gal(\mathbb{\overline{Q}}/\mathbb{Q})$-action.\ We note that any element $(\alpha_{\chi})_\chi$ in the right hand side corresponds to
\[
 \sum_{\chi \in \Irr G/\sim} \sum_{\sigma \in \Gal(\mathbb{Q}(\chi)/\mathbb{Q})} \alpha_{\chi}^{\sigma} e_{\chi^{\sigma}}
\]
in the left hand side where we put $\chi^{\sigma} := \sigma \circ \chi$.\  Let $\Lambda^{\prime}$ be a maximal $\mathbb{Z}$-order in $\mathbb{Q}[G]$ which contains $\mathbb{Z}[G]$.\ If we let $\mathfrak{o}_\chi$ denote the ring of integers of $\mathbb{Q}(\chi)$,\ $\zeta(\Lambda^{\prime})$ coincides with $\bigoplus_{\chi \in \Irr G/\sim} \mathfrak{o}_\chi$ which is the unique maximal order in $\zeta(\mathbb{Q}[G])$.\ 
For each $1$-dimensional character $\chi \in \Irr G$,\ let $\chi^{\prime}$ be the character of $G/\ker\chi$ whose inflation to $G$ is $\chi$.\ Then we easily see $e_\chi = e_{\chi^{\prime}}(1/|\ker \chi)\Norm_{\ker\chi}$ where $\Norm_{\ker\chi} := \sum_{h \in \ker\chi}h$.\ If $\chi$ is induced by a character of a subgroup of $G$,\ we can write down $e_\chi$ by the following lemma:
\begin{lem}\label{lem:idempotent}
Let $G$ be a finite group and let $H$ be a subgroup of $G$.\ If an irreducible character $\chi$ of $G$ is induced by an irreducible character of $H$, we have 
\[
e_{\chi} = \sum_{\substack{\phi \in \Irr H/\sim_{\chi},\ \\ \Ind\phi = \chi}} \sum_{h \in \Gal(\mathbb{Q}(\phi)/\mathbb{Q}(\chi))} e_{\phi^{h}}.\ 
\]
where $\Irr H/\sim_\chi$ means $\Irr H$ modulo $\Gal(\overline{\mathbb{Q}}/\mathbb{Q}(\chi))$-action.\ 
\end{lem}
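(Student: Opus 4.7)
The plan is to unwrap the double sum on the right-hand side into a single sum over $\Phi := \{\phi' \in \Irr H : \Ind_H^G \phi' = \chi\}$, and then to match this with $e_\chi$ via the explicit induced character formula together with Clifford theory.

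The first step is combinatorial. Since induction commutes with the Galois action on characters, any $\sigma \in \Gal(\overline{\mathbb{Q}}/\mathbb{Q}(\phi'))$ satisfies $\chi^\sigma = \Ind(\phi'^\sigma) = \chi$, hence $\mathbb{Q}(\chi) \subseteq \mathbb{Q}(\phi')$ for every $\phi' \in \Phi$, and $\Phi$ is stable under $\Gal(\overline{\mathbb{Q}}/\mathbb{Q}(\chi))$. The orbit of an orbit representative $\phi$ is parametrized by $\Gal(\mathbb{Q}(\phi)/\mathbb{Q}(\chi))$, so the iterated sum on the right-hand side collapses to $\sum_{\phi' \in \Phi} e_{\phi'}$, and it suffices to prove this simplified equality.

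The second step is the core analytic identity. Starting from $\pr_\chi = \sum_g \chi(g^{-1}) g$ and substituting the induced character formula
\[
\chi(g) = \frac{1}{|H|} \sum_{y \in G,\ y^{-1}gy \in H} \phi(y^{-1}gy),
\]
the change of variables $k = y^{-1}g^{-1}y \in H$ and the recognition of $\sum_{k \in H} \phi(k) k^{-1}$ as $\pr_\phi$ produce
\[
\pr_\chi = \frac{1}{|H|} \sum_{y \in G} y\, \pr_\phi\, y^{-1} = \sum_{x \in G/H} x\, \pr_\phi\, x^{-1},
\]
the last equality using that $\pr_\phi$ is central in $\overline{\mathbb{Q}}[H]$. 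Rescaling with $e_\chi = (\chi(1)/|G|)\pr_\chi$, $e_\phi = (\phi(1)/|H|)\pr_\phi$ and the dimension relation $\chi(1) = [G:H]\phi(1)$ converts this into the idempotent identity $e_\chi = \sum_{x \in G/H} x\, e_\phi\, x^{-1}$.

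The final step matches this $G$-conjugation description with the character-theoretic one. When $H$ is normal in $G$, conjugation preserves $H$, so each $x e_\phi x^{-1}$ equals $e_{\phi^x}$ with $\phi^x(h) := \phi(x^{-1}hx)$; Clifford theory together with irreducibility of $\chi = \Ind \phi$ forces the inertia $I_G(\phi)$ to equal $H$, so the $[G:H]$ translates $\phi^x$ are pairwise distinct and exhaust $\Phi$. Regrouping by $\Gal(\overline{\mathbb{Q}}/\mathbb{Q}(\chi))$-orbits then recovers the stated formula. I expect this matching step to be the main obstacle when $H$ is not normal: the conjugates $xHx^{-1}$ are then genuinely distinct subgroups of $G$ and $x e_\phi x^{-1}$ lies in a different group algebra for each coset, so rewriting the sum as one involving only $\phi' \in \Irr H$ is not formal. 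Overcoming this likely requires reducing to the normal case via the factorization $e_\phi = e_{\phi'}(1/|\ker\phi|)\Norm_{\ker\phi}$ recalled just before the lemma, passing to a convenient normal subgroup containing $\ker\phi$, or exploiting the monomial hypothesis used elsewhere in the paper so that the inducing subgroup may be chosen normal at the outset.
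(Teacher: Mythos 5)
Your argument is correct under the additional hypothesis that $H$ is normal in $G$, and it proceeds along a genuinely different route from the paper. The paper regards $\overline{\mathbb{Q}}[G]$ as a bimodule over $\overline{\mathbb{Q}}[H]$, splits it as $\bigoplus_{\phi \in \Irr H}\overline{\mathbb{Q}}[G]e_\phi$, and invokes Frobenius reciprocity to identify $\overline{\mathbb{Q}}[G]e_\chi$ with the sum of those $\overline{\mathbb{Q}}[G]e_\phi$ for which $\Ind^G_H\phi = \chi$, obtaining $e_\chi = \sum_{\Ind\phi=\chi}e_\phi$ before regrouping the Galois orbits exactly as in your first step. You instead unwind $\pr_\chi$ through the explicit induced character formula to reach the unconditional identity $e_\chi = \sum_{x\in G/H}x\,e_\phi\,x^{-1}$, and only then use normality of $H$ to rewrite each conjugate $x e_\phi x^{-1}$ as $e_{\phi^x}$ and Clifford theory to check that the $[G:H]$ conjugates of $\phi$ are distinct and exhaust $\{\phi'\in\Irr H : \Ind\phi'=\chi\}$. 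Your route has the virtue of making visible exactly where normality is used: it is what keeps $xe_\phi x^{-1}$ inside $\overline{\mathbb{Q}}[H]$.

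Your hesitation about non-normal $H$ is more than caution: the lemma as stated is false without normality, and the paper's own argument needs that hypothesis too. Take $G = S_4$ and $H = D_8 = \langle(1234),(13)\rangle$, a non-normal Sylow $2$-subgroup. The linear character $\phi$ with $\phi((1234))=1$, $\phi((13))=-1$ induces irreducibly to $\chi := \chi_{\mathrm{std}}\otimes\mathrm{sgn}$; here $\mathbb{Q}(\chi)=\mathbb{Q}(\phi)=\mathbb{Q}$ and $\{\phi'\in\Irr H : \Ind\phi'=\chi\}=\{\phi\}$, so the claimed right-hand side would be $e_\phi$, but $\dim_{\overline{\mathbb{Q}}}\overline{\mathbb{Q}}[G]e_\chi = \chi(1)^2 = 9$ while $\dim_{\overline{\mathbb{Q}}}\overline{\mathbb{Q}}[G]e_\phi = [G:H]\phi(1)^2 = 3$, so $e_\chi\ne e_\phi$. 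What fails in the paper's bimodule argument is the step passing from $\bigl(\bigoplus_{\phi\in\Irr H}\overline{\mathbb{Q}}[G]e_\phi\bigr)e_\chi$ to $\bigoplus_{\Ind\phi=\chi}\overline{\mathbb{Q}}[G]e_\phi$: this silently assumes $\langle\Res^G_H\chi,\phi'\rangle=0$ whenever $\Ind^G_H\phi'\ne\chi$, and in the example the two-dimensional irreducible of $D_8$ occurs in $\Res^G_H\chi$ but induces to a degree-six character. Normality of $H$ restores that assumption by Clifford theory, and every invocation of the lemma in this paper does have $H$ normal, so your proof, once the normality hypothesis is stated explicitly, establishes exactly what is actually used.
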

\begin{proof}
Since $\overline{\mathbb{Q}}[G]$ is a left and right $\overline{\mathbb{Q}}[H]$-algebra,\ $\overline{\mathbb{Q}}[G]$ decomposes into 
\[
\bigoplus_{\phi \in \Irr H} \mathbb{\overline{Q}}[G]e_\phi
\]
and the components $\mathbb{\overline{Q}}[G]e_\phi$ are left and right $\overline{\mathbb{Q}}[H]$-algebras.\ 
By the Frobenius reciprocity law,\ we have $\langle \chi,\ \Ind^{G}_{H}\phi \rangle_G = \langle \Res^{G}_{H}\chi,\ \phi\rangle_H$ where $\langle\  ,\ \rangle$ is the usual inner product of characters.\ This implies that the simple component $\mathbb{\overline{Q}}[G]e_\chi$ of $\mathbb{\overline{Q}}[G]$ decomposes into 
\[
 (\bigoplus_{\phi \in \Irr H} \mathbb{\overline{Q}}[G]e_\phi)e_\chi = \bigoplus_{\phi \in \Irr H,\ \Ind^{G}_{H}\phi = \chi} \mathbb{\overline{Q}}[G]e_\phi 
\]
as a left and  right $\mathbb{\overline{Q}}[H]$-algebra.\ This implies 
\begin{equation}\label{eq:idempotent1}
e_\chi = \sum_{\substack{\phi \in \Irr H,\ \\ \Ind^{G}_{H} \phi = \chi}} e_\psi.\ 
\end{equation}
Take a character $\phi \in \Irr H$ such that $\Ind^{G}_H \phi = \chi$.\ Then for each $g \in G$,\ we have
\[
 \chi(g) = \sum_{\substack{\tau \in G,\ \\ \tau^{-1}g\tau \in H}} \phi(\tau^{-1}g\tau).\ 
\]
Hence,\ we have $\Ind^{G}_H \phi^{\sigma} = \Ind^{G}_H \phi$ for all $\sigma \in \Gal(\mathbb{Q}(\phi)/\mathbb{Q}(\chi))$.\ Combining this with (\ref{eq:idempotent1}),\ we have
\[
 e_{\chi} = \sum_{\substack{\phi \in \Irr H/\sim_{\chi},\ \\ \Ind^{G}_H\phi = \chi}} \sum_{h \in \Gal(\mathbb{Q}(\phi)/\mathbb{Q}(\chi))} e_{\phi^{h}}.\ 
\]
This completes the proof.\ 
\end{proof}
Let $p$ be a prime.\ All the above arguments are valid if we replace $\mathbb{Q}$ by $\mathbb{Q}_p$ and $\mathbb{C}$ by $\mathbb{C}_p$,\ where $\mathbb{Q}_p$ denotes the $p$-adic completion of $\mathbb{Q}$ and $\mathbb{C}_p$ denotes the $p$-adic completion of a fixed algebraic closure $\overline{\mathbb{Q}}_p$ of $\mathbb{Q}_p$.\

\subsection{Reduced norms and conductors}\label{reducednorm}
Let $\mathfrak{o}$ be a Dedekind domain,\ $F$ the quotient field of $\mathfrak{o}$ and $A$ a separable semisimple algebra over  $F$.\ Then $A$ has a Wedderburn decomposition $A \cong \bigoplus_{i=1}^{t} A_i$ where $A_i$ is a finite dimensional central simple algebra over $F_i$ and $F_i$ is a finite separable extension over $F$.\ We set $s_i:=[F_i:F]$.\  Let $E$ be a splitting field for $A$ (we can choose $E$ so that $E$ is a finite Galois extension over $F$) and let $\nr_A$ denote the following composition map;
\[
\nr_A : A \rightarrow E \otimes_F A \cong \bigoplus_{i=1}^{t} M_{n_i}(E)^{\oplus s_i} \xrightarrow{{\oplus \det_E}^{\oplus s_i}} \zeta(E \otimes A).
\]
The image of $A$ actually lies in $\zeta(A)$ and does not depend on the choice of $E$.\ This map is called the reduced norm of $A$.\ We extend this map to any ring of matrices over $A$ by means of 
\[
\nr_A : M_m(A) \rightarrow M_m(E \otimes_F A) \cong \bigoplus_{i=1}^{t} M_{mn_i}(E)^{\oplus s_i} \xrightarrow{{\oplus \det_E}^{\oplus s_i}} \zeta(E \otimes A).
\]
 The image of $M_m(A)$ also lies in $\zeta(A)$ and does not depend on the choice of $E$ (for details of the reduced norm map,\ see \cite[\S9]{Re} and \cite[\S 7D]{CR}).\  Now we fix an $\mathfrak{o}$-order $\Lambda$ and a maximal $\mathfrak{o}$-order $\Lambda^{\prime}$ which contains $\Lambda$.\ Unfortunately the reduced norm of $A$ does not take $\Lambda$ to its center $\zeta(\Lambda)$ but to $\zeta(\Lambda^{\prime})$ in general.\ For this reason,\ we have to consider some conductors of $\Lambda^{\prime}$ over $\Lambda$.\ First we define 
\[
\mathfrak{F}(\Lambda) := \{ x \in \zeta(\Lambda^{\prime}) \mid x \Lambda^{\prime} \subset \Lambda\} (\subset \zeta(\Lambda)) .\ 
\]
This set is called the {\it central conductor} of $\Lambda^{\prime}$ over $\Lambda$.\ In the case $F = \mathbb{Q}$ or $\mathbb{Q}_p$,\ $A = F[G]$ and  $\Lambda = \mathfrak{o}[G]$ with finite group $G$,\ by Jacobinski's central conductor formula (\cite[Theorem 3]{Ja} also see \cite[\S27]{CR}),\  we see the explicit structure of $\mathfrak{F}(\Lambda)$ as 
\begin{equation} \label{eq:cond}
\mathfrak{F}(\Lambda) \cong \bigoplus_{\chi \in \Irr G /\sim} \frac{|G|}{\chi(1)}\mathfrak{D}^{-1}(F(\chi)/F)
\end{equation}
where $\mathfrak{D}^{-1}(F(\chi)/F)$ is the inverse different of $F(\chi) := F(\chi(g);g \in G)$ over $F$ and $\chi$ runs over all irreducible characters of $G$ modulo $\Gal(\overline{F}/F)$-action.\ We note that the element $((|G|/|\chi(1)|)\alpha_\chi)_\chi$ in the right hand side of (\ref{eq:cond}) corresponds to $\sum_{\chi \in \Irr G/\sim}\sum_{\sigma \in \Gal(F(\chi)/F)}\alpha_\chi^{\sigma} \pr_{\chi^{{\sigma}}}$ in the left hand side.\ In what follows,\ we only consider the case $A$ = $F[G]$  and $\Lambda = \mathfrak{o}[G]$.\ Next we define
\[
 \mathcal{ H}(\Lambda) := \{ x \in \zeta(\Lambda^{\prime}) \mid \text{$xH^{*} \in M_n(\Lambda)$ for any $H \in M_n(\Lambda)$ for all $n \in \mathbb{N}$}\}
\]
where $H^{*}$ is the  matrix over $\Lambda^{\prime}$ defined in \cite[\S 3.6]{JN} such that $HH^{*} = H^{*}H = \nr(H) \cdot 1_{n \times n}$.\ The matrix $H^{*}$ is a non-commutative analogue of the adjoint matrix and was first considered by Nickel in \cite{Ni10} (H. Johnston and Nickel\cite{JN}  introduce a slightly different definition of  $H^{*}$).\ Since $H^{*}$ lies in $M_n(\Lambda^{\prime})$,\ $\mathfrak{F}(\Lambda)$ is obviously contained in $\mathcal{ H}(\Lambda)$.\ $\mathcal{ H}(\Lambda)$ appears in a natural way in the context of the non-commutative Fitting invariants (cf. \cite[Theorem 4.2]{Ni10} and \cite[Theorem 1.2]{Ni11}).\ We set 
\[
\mathcal{ I}(\Lambda) := \langle \nr_A(H) \mid H \in M_n(\Lambda),\ n \in \mathbb{N}\rangle_{\zeta(\Lambda)}.\ 
\]
Then we get the following relation between $\mathfrak{F}(\Lambda)$ and $\mathcal{ H}(\Lambda)$: 
\begin{prop}[\cite{JN},\ Remark 6.5 and Corollary 6.20]\label{prop:conductor}
Let $p$ be a prime and $\Lambda$ be $\mathbb{Z}_p[G]$ with a finite group $G$.\ We assume $\mathcal{ I}(\Lambda) = \zeta(\Lambda^{\prime})$ and the degrees of all irreducible characters of $G$ are prime to $p$.\ Then we have 
\[
\mathcal{ H}(\Lambda) = \mathfrak{F}(\Lambda).\ 
\] 
\end{prop}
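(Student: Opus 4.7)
The inclusion $\mathfrak{F}(\Lambda) \subseteq \mathcal{H}(\Lambda)$ is immediate and already observed after the definition of $\mathcal{H}(\Lambda)$: since $H^{*} \in M_n(\Lambda^{\prime})$, any $x \in \mathfrak{F}(\Lambda)$ satisfies $xH^{*} \in \Lambda \cdot M_n(\Lambda^{\prime}) \subseteq M_n(\Lambda)$. The content of the proposition is therefore the reverse inclusion $\mathcal{H}(\Lambda) \subseteq \mathfrak{F}(\Lambda)$, and I would split the argument into three steps.

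First, I would exploit the identity $HH^{*} = \nr(H)\cdot I_n$ to translate membership in $\mathcal{H}(\Lambda)$ into a multiplicative statement about reduced norms. For $x \in \mathcal{H}(\Lambda)$ and $H \in M_n(\Lambda)$, the product $H \cdot (xH^{*}) = x\nr(H)\cdot I_n$ already lies in $M_n(\Lambda)$, because $xH^{*} \in M_n(\Lambda)$ by the definition of $\mathcal{H}(\Lambda)$ and $H \in M_n(\Lambda)$ by assumption; reading off a diagonal entry yields $x\nr(H) \in \Lambda$. Since both $x$ and $\nr(H)$ lie in $\zeta(\Lambda^{\prime})$, the product is central in $\Lambda^{\prime}$, so $x\nr(H) \in \zeta(\Lambda^{\prime}) \cap \Lambda = \zeta(\Lambda)$.

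Second, I would invoke the hypothesis $\mathcal{I}(\Lambda) = \zeta(\Lambda^{\prime})$: every element of $\zeta(\Lambda^{\prime})$ is a $\zeta(\Lambda)$-linear combination of such reduced norms, so the first step upgrades at once to $x \cdot \zeta(\Lambda^{\prime}) \subseteq \zeta(\Lambda)$. Equivalently, $x$ lies in the central conductor $\{y \in \zeta(\Lambda^{\prime}) : y\zeta(\Lambda^{\prime}) \subseteq \zeta(\Lambda)\}$.

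The hard part is the third step: promoting annihilation of $\zeta(\Lambda^{\prime})/\zeta(\Lambda)$ to annihilation of $\Lambda^{\prime}/\Lambda$. Here the character-degree hypothesis $p \nmid \chi(1)$ is essential. My plan is to use it to establish $\Lambda^{\prime} = \zeta(\Lambda^{\prime}) \cdot \Lambda$ block by block: on each Wedderburn component the coprimality makes the factor $\chi(1)^{-1}$ in the primitive idempotent $e_\chi = \chi(1)|G|^{-1}\sum_{g}\chi(g^{-1})g$ a $p$-adic unit, which combined with $\mathcal{I}(\Lambda) = \zeta(\Lambda^{\prime})$ forces the local Schur-index data to cooperate with the reduced-norm map so that the maximal order is generated as a two-sided $\Lambda$-module by its center. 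Granting this, $x\Lambda^{\prime} = x\zeta(\Lambda^{\prime})\Lambda \subseteq \zeta(\Lambda)\Lambda \subseteq \Lambda$, so $x \in \mathfrak{F}(\Lambda)$. As a complementary check one may verify the resulting inclusion character by character against Jacobinski's formula \eqref{eq:cond}, matching the factor $|G|/\chi(1)$ on each component. The decisive technical input — that $\mathcal{I}(\Lambda) = \zeta(\Lambda^{\prime})$ together with $p \nmid \chi(1)$ suffices to close the gap between central and non-central conductors — is exactly what is isolated in \cite[Remark 6.5, Corollary 6.20]{JN}.
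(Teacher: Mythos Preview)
The paper does not give its own proof of this proposition: it is stated with the attribution ``\cite{JN}, Remark 6.5 and Corollary 6.20'' and the text proceeds immediately to the next lemma. There is therefore no proof in the paper to compare your proposal against.

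On the merits of the proposal itself: your Steps 1 and 2 are correct and give the clean reduction $\mathcal{H}(\Lambda)\subseteq\{x\in\zeta(\Lambda'): x\,\zeta(\Lambda')\subseteq\zeta(\Lambda)\}$. Step 3, however, is not a proof. You assert $\Lambda'=\zeta(\Lambda')\cdot\Lambda$, offer only a heuristic (``forces the local Schur-index data to cooperate with the reduced-norm map so that the maximal order is generated as a two-sided $\Lambda$-module by its center''), explicitly write ``Granting this\ldots'', and then in the last sentence defer the ``decisive technical input'' back to \cite{JN} --- which is the very result under discussion. So what you have is an outline with the hard step missing. To make Step 3 rigorous you would need either a genuine argument that $\zeta(\Lambda')\cdot\Lambda$ is already a maximal order under the two hypotheses, or a direct componentwise verification that the conductor of $\zeta(\Lambda')$ into $\zeta(\Lambda)$ matches Jacobinski's expression $\bigoplus_\chi (|G|/\chi(1))\mathfrak{D}^{-1}(\mathbb{Q}_p(\chi)/\mathbb{Q}_p)$ when $p\nmid\chi(1)$; neither is the one-line matter your sketch suggests.
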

We conclude this section with the following lemma:
\begin{lem}\label{lem:conductor}Let $\chi$ be an irreducible character of $G$ which is induced by an irreducible character of a subgroup $H$ of $G$.\ Take an arbitrary element $x$ in $\mathfrak{F}(\Lambda)$ of the form
\[
 x = \sum_{\sigma \in \Gal(F(\chi)/F)} x_{\chi}^{\sigma} \pr_{\chi^{\sigma}}.\ 
\]
Then we have 
\[
 x = \sum_{\substack{\phi \in \Irr H/\sim,\ \\ \exists \sigma \in \Gal(F(\chi)/F),\ \Ind \phi = \chi^{\sigma}}} \sum_{g \in \Gal(F(\phi)/F)} x_{\chi}^{g} \pr_{\phi^g}.\ 
\]
In particular,\ $x$ also lies in $\mathfrak{F}(\mathfrak{o}[H])$.\ 
\end{lem}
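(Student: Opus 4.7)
The plan is to expand $x$ by substituting the decomposition of Lemma \ref{lem:idempotent} into each $\pr_{\chi^\sigma}$, then regroup the resulting sum by $\Gal(\overline{F}/F)$-orbits on $\Irr H$ to match the stated formula; the membership in $\mathfrak{F}(\mathfrak{o}[H])$ will then follow from Jacobinski's conductor formula (\ref{eq:cond}) applied to $H$.

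First, using Lemma \ref{lem:idempotent} combined with the identity $|G|/\chi^\sigma(1) = |H|/\phi(1)$ (valid whenever $\Ind \phi = \chi^\sigma$, since $\chi^\sigma(1) = [G:H]\phi(1)$), I would rewrite $\pr_{\chi^\sigma} = \sum_{\phi' \in \Irr H,\ \Ind \phi' = \chi^\sigma} \pr_{\phi'}$, where the right-hand side uses the $H$-analogue of $\pr$. Substituting into $x = \sum_\sigma x_\chi^\sigma \pr_{\chi^\sigma}$ produces
\[
x = \sum_{\substack{\phi' \in \Irr H \\ \Ind \phi' = \chi^{\sigma(\phi')}}} x_\chi^{\sigma(\phi')} \pr_{\phi'},
\]
where $\sigma(\phi') \in \Gal(F(\chi)/F)$ is the unique element with $\Ind \phi' = \chi^{\sigma(\phi')}$; in particular this already exhibits $x$ as an element of $\mathbb{Q}[H] \subset \mathbb{Q}[G]$.

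Next, I would regroup by $\Gal(\overline{F}/F)$-orbits, picking in each orbit a representative $\phi$ satisfying $\Ind \phi = \chi$ itself. Such a representative exists because, with $F(\chi) \subset F(\phi)$ both Galois over $F$, the restriction $\Gal(F(\phi)/F) \twoheadrightarrow \Gal(F(\chi)/F)$ is surjective: from any $\phi_0$ in the orbit with $\Ind \phi_0 = \chi^{\sigma_0}$, one selects $g \in \Gal(F(\phi_0)/F)$ with $g|_{F(\chi)} = \sigma_0^{-1}$ and takes $\phi = \phi_0^g$. With this normalization, every conjugate $\phi^g$ for $g \in \Gal(F(\phi)/F)$ has $\sigma(\phi^g) = g|_{F(\chi)}$, so the coefficient $x_\chi^{\sigma(\phi^g)}$ equals $x_\chi^g$ in the sense that $g$ acts on $x_\chi \in F(\chi) \subset F(\phi)$ through its restriction. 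Summing over orbit representatives and over $g \in \Gal(F(\phi)/F)$ therefore reproduces the expansion above, establishing the first equality.

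Finally, for the ``In particular'' assertion, Jacobinski's formula (\ref{eq:cond}) applied to $\mathfrak{o}[H]$ identifies $\mathfrak{F}(\mathfrak{o}[H])$ with $\bigoplus_\phi (|H|/\phi(1))\, \mathfrak{D}^{-1}(F(\phi)/F)$, and the formula just obtained shows that the $\phi$-component of $x$ is $(|H|/\phi(1))\, x_\chi$. It therefore suffices to verify $x_\chi \in \mathfrak{D}^{-1}(F(\phi)/F)$; this follows from $x_\chi \in \mathfrak{D}^{-1}(F(\chi)/F)$ (which encodes $x \in \mathfrak{F}(\Lambda)$ on the $\chi$-component) combined with the tower formula $\mathfrak{D}(F(\phi)/F) = \mathfrak{D}(F(\phi)/F(\chi)) \cdot \mathfrak{D}(F(\chi)/F)\mathfrak{o}_{F(\phi)}$, which yields the inclusion $\mathfrak{D}^{-1}(F(\chi)/F) \subset \mathfrak{D}^{-1}(F(\phi)/F)$. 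I expect the main technical obstacle to lie in the combinatorial bookkeeping: reconciling the $\Gal(\overline{F}/F(\chi))$-orbit parametrization implicit in Lemma \ref{lem:idempotent} with the $\Gal(\overline{F}/F)$-orbit parametrization of the target formula, and confirming that the prescribed normalization $\Ind \phi = \chi$ of representatives is precisely what is needed to make $x_\chi^g$ agree with $x_\chi^{\sigma(\phi^g)}$.
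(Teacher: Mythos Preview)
Your proof is correct and follows essentially the same approach as the paper's: both rest on Lemma~\ref{lem:idempotent} (in the form $\pr_{\chi^\sigma}=\sum_{\Ind\phi'=\chi^\sigma}\pr_{\phi'}$) together with the coset decomposition of $\Gal(F(\phi)/F)$ over $\Gal(F(\phi)/F(\chi))$, the only cosmetic difference being that the paper computes from the right-hand side toward $x$ while you expand $x$ and regroup. Your explicit normalization $\Ind\phi=\chi$ of the orbit representatives is in fact implicitly used in the paper's proof as well (the passage to the sum indexed by $\Irr H/\!\sim_{\chi^\sigma}$ with $\Ind\phi=\chi^\sigma$ needs it), and your justification of $x_\chi\in\mathfrak D^{-1}(F(\phi)/F)$ via the tower formula for the different is exactly what the paper invokes in one line.
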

\begin{proof}
For each $\sigma \in \Gal(F(\chi)/F)$,\ we fix an extension $\tilde{\sigma}$ to $\Gal(F(\phi)/F)$.\ Then we have
\begin{eqnarray*}
\lefteqn{\sum_{\substack{\phi \in \Irr H/\sim,\ \\ \exists \sigma \in \Gal(F(\chi)/F),\ \Ind \phi = \chi^{\sigma}}} \sum_{g \in \Gal(F(\phi)/F)} x_{\chi}^{g} \pr_{\phi^g}} \\
&=& \sum_{\substack{\phi \in \Irr H/\sim,\ \\ \exists \sigma \in \Gal(F(\chi)/F),\ \Ind \phi = \chi^{\sigma}}} \sum_{\sigma \in \Gal(F(\chi)/F)} \sum_{h \in \Gal(F(\phi)/F(\chi))} x_{\chi}^{\tilde{\sigma}h} \pr_{\phi^{\tilde{\sigma}h}} \\
&=& \sum_{\substack{\phi \in \Irr H/\sim,\ \\ \exists \sigma \in \Gal(F(\chi)/F),\ \Ind \phi = \chi^{\sigma}}} \sum_{\sigma \in \Gal(F(\chi)/F)} (\sum_{h \in \Gal(F(\phi)/F(\chi))} x_{\chi}^{h} \pr_{\phi^h})^{\tilde{\sigma}}\\
&=& \sum_{\substack{\phi \in \Irr H/\sim,\ \\ \exists \sigma \in \Gal(F(\chi)/F),\ \Ind \phi = \chi^{\sigma}}} \sum_{\sigma \in \Gal(F(\chi)/F)} (\sum_{h \in \Gal(F(\phi)/F(\chi))} x_{\chi}^{h} \pr_{\phi^h})^{\sigma} \\
&=& \sum_{\sigma \in \Gal(F(\chi)/F)} x_{\chi}^{\sigma} \sum_{\substack{\phi \in \Irr H/\sim_{\chi^{\sigma}},\ \\ \Ind \phi = \chi^{\sigma}}} (\sum_{h \in \Gal(F(\phi)/F(\chi))} \pr_{\phi^h})^{\sigma} \\
&=& \sum_{\sigma \in \Gal(F(\chi)/F)} x_{\chi}^{\sigma} (\sum_{\substack{\phi \in \Irr H/\sim_{\chi},\ \\ \Ind \phi = \chi}}\sum_{h \in \Gal(F(\phi)/F(\chi))}  \pr_{\phi^h})^{\sigma} \\
&=& \sum_{\sigma \in \Gal(F(\chi)/F)} x_{\chi}^{\sigma} \pr_{\chi^{\sigma}}.\ 
\end{eqnarray*}
The last equality follows from  Lemma \ref{lem:idempotent}.\ Since $x_{\chi}$ also lies in $D^{-1}(F(\phi)/F)$,\ $x$ lies in $\mathfrak{F}(\mathfrak{o}[H])$.\ 
\end{proof}

 \subsection{Stickelberger elements}\label{sec:stick}
 Let $K/k$ be a finite Galois extension of number fields with Galois group $G$.\ For any finite place  $\mathfrak{p}$ of $k$ we fix a finite place $\mathfrak{P}$ of $K$ above $\mathfrak{p}$ and $G_{\mathfrak{P}}$ (resp.\ $I_{\mathfrak{P}}$) denotes the decomposition subgroup (resp.\ inertia subgroup) of $G$ at $\mathfrak{P}$.\ Moreover,\ we fix a lift $\phi_{\mathfrak{P}}$ of the Frobenius automorphism  of $G_{\mathfrak{P}}/I_{\mathfrak{P}}$.\  
 \par
Let $S$ be a finite set of places of $k$ containing all infinite places of $k$ and let $T$ be another finite set of places of $k$ which are unramified in $K$ such that $S \cap T = \emptyset$. For any irreducible character $\chi$ of $G$,\ we put $\delta_T(\chi) := \prod_{\mathfrak{p} \in T} \det(1- \phi_{\mathfrak{P}}^{-1} N\mathfrak{p} \mid V_{\chi})$ and $L_{S}(s,\chi,K/k)$ denotes the $S$-imprimitive Artin $L$-function attached to $\chi$.\ Then we define 
\[
 \theta_{K/k,S}^{T} := \sum_{\chi \in \Irr G} \delta_T(\chi) L_S(0,\overline{\chi},K/k) e_{\chi} \in \zeta(\mathbb{C}[G])
\]
where $\overline{\chi}$ is the contragredient character of $\chi$.\ We call this element the $(S,T)$-modified Stickelberger element.\ When $S$ is the set of all ramifying places and all infinite places and $T$ is empty,\ we put $\theta_{K/k}:= \theta_{K/k,S}^{T}$ .\ Moreover,\ in the case $k$ = $\mathbb{Q}$ we will always omit the trivial character component of $\theta_{K/k,S}^{T}$.\ We can also express this element by 
\[
 \theta^{T}_{K/k,S} = \nr_{\mathbb{Q}[G]}(\prod_{\mathfrak{p} \in T}(1-\phi_{\mathfrak{P}}^{-1}N\mathfrak{p})) \theta_{K/k,S}.\ 
\]
The $(S,T)$-modified Stickelberger element is characterized by the formula
\begin{equation}\label{eq:eq1}
\chi(\theta_{K/k,S}^{T}) = \chi(1) \delta_T(\chi) L_S(0,\chi,K/k).\ 
\end{equation}
Now we assume $K/k$ is a CM-extension,\ that is,\ $k$ is a totally real field,\ $K$ is a CM-field and the complex conjugation induces a unique automorphism $j$ which lies in the center of $G$.\ We call a character $\chi$ odd if $\chi(j) = -\chi(1)$ and even otherwise.\ Then $L(0,\chi,K/k) = 0$ if $\chi$ is an even character.\ For an odd character $\chi$,\ we get $L(0,\chi,K/k)^{\sigma} = L(0,\chi^{\sigma},K/k)$ for all $\sigma \in \Aut(\mathbb{C})$ (proven by Siegel\cite{Si} if $G$ is abelian and the general result is given by Brauer induction\cite[Theorem 1.2]{Ta84}),\ which implies $\theta_{K/k,S}^{T}$ actually lies in $\zeta(\mathbb{Q}[G])$.\ Finally we put 
$\epsilon_{\chi,S} := \lim_{s \to 0} \prod_{\mathfrak{p} \in S \backslash S_{\infty}} \det(1- \phi_{\mathfrak{P}}N\mathfrak{p}^{-s} | V_{\chi}^{I_{\mathfrak{P}}})$ and define $\epsilon_S := \sum_{\chi \in \Irr G}\epsilon_{\chi,S} e_{\chi} $.\ Let $\Lambda^{\prime}$ be any maximal $\mathbb{Z}$-order in $\mathbb{Q}[G]$ which contains $\mathbb{Z}[G]$.\ Then we have 
\begin{lem}\label{lem:epsilon}
$\epsilon_S$ lies in $\zeta(\Lambda^{\prime})$.\ 
\end{lem}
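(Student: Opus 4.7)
The plan is to factor $\epsilon_S$ into local contributions and then to test membership in $\zeta(\Lambda^{\prime})$ against the concrete description $\zeta(\Lambda^{\prime})\cong \bigoplus_{\chi \in \Irr G/\sim}\mathfrak{o}_\chi$ recalled in \S\ref{sec:groupalg}. For each $\mathfrak{p}\in S\setminus S_{\infty}$, set
\[
\epsilon_{S,\mathfrak{p}} := \sum_{\chi\in \Irr G}\det\bigl(1-\phi_{\mathfrak{P}}\mid V_{\chi}^{I_{\mathfrak{P}}}\bigr)\, e_{\chi}.
\]
Since the $e_\chi$ are orthogonal central idempotents, one has $\epsilon_S=\prod_{\mathfrak{p}\in S\setminus S_{\infty}}\epsilon_{S,\mathfrak{p}}$. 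As $\zeta(\Lambda^{\prime})$ is a ring, it is enough to prove that each local factor $\epsilon_{S,\mathfrak{p}}$ lies in $\zeta(\Lambda^{\prime})$.

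For a single $\mathfrak{p}$, I would verify the two conditions that characterize elements of $\bigoplus_{\chi/\sim}\mathfrak{o}_\chi$ inside $\zeta(\overline{\mathbb{Q}}[G])$. First, Galois equivariance: fix a representation $\rho$ affording $\chi$; the Galois conjugate $\rho^{\sigma}$ affords $\chi^{\sigma}$, and because the subgroups $I_{\mathfrak{P}}\subset G_{\mathfrak{P}}\subset G$ and the lift $\phi_{\mathfrak{P}}$ are intrinsic to $G$ and independent of $\rho$, the matrix of $\phi_{\mathfrak{P}}$ on $V_{\chi^{\sigma}}^{I_{\mathfrak{P}}}$ is the $\sigma$-conjugate of the matrix on $V_{\chi}^{I_{\mathfrak{P}}}$. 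Evaluating the characteristic polynomial at $T=1$ then yields
\[
\det\bigl(1-\phi_{\mathfrak{P}}\mid V_{\chi^{\sigma}}^{I_{\mathfrak{P}}}\bigr)=\det\bigl(1-\phi_{\mathfrak{P}}\mid V_{\chi}^{I_{\mathfrak{P}}}\bigr)^{\sigma},
\]
and specializing to $\sigma\in \Gal(\overline{\mathbb{Q}}/\mathbb{Q}(\chi))$ shows each coefficient already lies in $\mathbb{Q}(\chi)$. Second, integrality: $\phi_{\mathfrak{P}}$ has finite order in $G$, so its eigenvalues on the finite-dimensional space $V_{\chi}^{I_{\mathfrak{P}}}$ are roots of unity, and $\det(1-\phi_{\mathfrak{P}}\mid V_{\chi}^{I_{\mathfrak{P}}})$ is a product of numbers $(1-\zeta)$ with $\zeta$ a root of unity, hence an algebraic integer.

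Combining the two conditions, each coefficient lies in $\mathfrak{o}_{\chi}$ and varies Galois-equivariantly with $\chi$, so under the identification of \S\ref{sec:groupalg} the element $\epsilon_{S,\mathfrak{p}}$ corresponds to a tuple in $\bigoplus_{\chi/\sim}\mathfrak{o}_\chi=\zeta(\Lambda^{\prime})$, and so does $\epsilon_S$. I do not foresee a genuine obstacle here; the argument is essentially a repackaging of the standard Galois-equivariance of Euler factors of Artin $L$-functions at $s=0$. The one point requiring care is making explicit that the decomposition and inertia subgroups depend only on the chosen prime $\mathfrak{P}$ and not on the representation, which is precisely what allows the $\sigma$-action on $V_\chi$ to intertwine cleanly with the passage to $I_{\mathfrak{P}}$-invariants.
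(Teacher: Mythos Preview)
Your argument is correct. Both your proof and the paper's establish the same two facts (Galois equivariance of the coefficients $\det(1-\phi_{\mathfrak{P}}\mid V_\chi^{I_{\mathfrak{P}}})$, and their integrality via the fact that the eigenvalues of $\phi_{\mathfrak{P}}$ are roots of unity), and the integrality halves are essentially identical. The difference lies in how the Galois equivariance (equivalently, $\epsilon_{S,\mathfrak{p}}\in\zeta(\mathbb{Q}[G])$) is obtained: you verify it coordinatewise by conjugating the representation, whereas the paper observes that after decomposing $V_\chi=V_\chi e_{I_{\mathfrak{P}}}\oplus V_\chi(1-e_{I_{\mathfrak{P}}})$ one has $\det(1-\phi_{\mathfrak{P}}\mid V_\chi^{I_{\mathfrak{P}}})=\det(1-\phi_{\mathfrak{P}}e_{I_{\mathfrak{P}}}\mid V_\chi)$, so that the local factor is literally $\nr_{\mathbb{Q}[G]}(1-\phi_{\mathfrak{P}}e_{I_{\mathfrak{P}}})$ and hence automatically lies in $\zeta(\mathbb{Q}[G])$. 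The reduced-norm identification is a tidy conceptual shortcut and ties in with the ambient language of the paper; your direct check is more elementary and self-contained. Either way the conclusion $\epsilon_S\in\bigoplus_{\chi/\sim}\mathfrak{o}_\chi=\zeta(\Lambda^{\prime})$ follows.
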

\begin{proof}First we set $e_{I_{\mathfrak{P}}}:= (1/|I_\mathfrak{P}|)\Norm_{I_{\mathfrak{P}}}$.\ Then we have $V_\chi^{I_{\mathfrak{P}}}=V_\chi e_{I_\mathfrak{P}}$ and $V_\chi = V_\chi e_{I_{\mathfrak{P}}} \oplus V_\chi(1-e_{I_{\mathfrak{P}}})$.\ Let $M_{\phi_{\mathfrak{P}}}$ be the matrix representing the action of $1-\phi_{\mathfrak{P}}N\mathfrak{p}$ on $V_\chi^{I_{\mathfrak{P}}}$.\ Then the diagonal matrix 
\[
\begin{pmatrix}
M_{\phi_{\mathfrak{P}}} & & & \\
                             &1& &\\
                            &  & \ddots & \\
                              &  & &1
\end{pmatrix}
\]
represents the action of $1-\phi_{\mathfrak{P}} N\mathfrak{p} e_{I_{\mathfrak{P}}}$ on $V_\chi = V_\chi e_{I_{\mathfrak{P}}} \oplus V_\chi(1-e_{I_{\mathfrak{P}}})$.\ 
This implies the  equality 
\[
\det(1-\phi_{\mathfrak{P}} N \mathfrak{p} \mid V_{\chi}^{I_{\mathfrak{P}}}) = \det(1-  \phi_{\mathfrak{P}}N\mathfrak{p} e_{I_{\mathfrak{P}}}\mid V_{\chi}).\ 
\]
So we see $\epsilon_S = \nr(1-\phi_{\mathfrak{P}} N\mathfrak{p} e_{I_{\mathfrak{P}}}) \in \zeta(\mathbb{Q}[G])$.\ Since $V_{\chi}^{I_{\mathfrak{P}}}$ is a representation of the abelian group $G_{\mathfrak{P}}/I_{\mathfrak{P}}$,\ $\det(1-\phi_{\mathfrak{P}}N\mathfrak{p} \mid V_{\chi}^{I_{\mathfrak{P}}})$ is an algebraic integer.\ So $\epsilon_S$ is contained in the unique maximal order $\zeta(\Lambda^{\prime})$ in $\zeta(\mathbb{Q}[G])$.\ 
\end{proof}

\section{Statements of the non-abelian Brumer and Brumer-Stark conjectures}\label{sec:conjectures}
In this section we review the formulation of the non-abelian Brumer and Brumer-Stark conjecture by Andreas Nickel,\ for the details see \cite{Ni10}.\ 
\par
Let $K/k$ be a finite Galois CM-extension of number fields with Galois group $G$ and let $S$ and $T$ be finite sets of places of $k$ and $E_S(K)$ denote the group of $S(K)$-units of $K$.\ We set $E_S^{T}(K) := \{ x \in E_S(K) \mid x \equiv 1 \mod \prod_{\mathfrak{P} \in T(K)} \mathfrak{P}\}$.\ We refer to the following condition as $Hyp(S,T)$ ;
\begin{itemize}
 \item $S$ contains all ramifying places and all infinite places of $k$,\ 
 \item $S \cap T$ = $\emptyset$,\ 
 \item $E_S^T(K)$ is torsion free.\ 
\end{itemize}
For any fixed set $S$ which contains all ramifying places and all infinite places,\ we define 
\[
\mathfrak{A}_S := \langle \delta_T \mid \text{$Hyp(S,T)$ is satisfied} \rangle_{\zeta(\mathbb{Z}[G])}.\ 
\]
By \cite[Lemma 1.1]{Ta84},\ $\mathfrak{A}_S$ coincides with the $\mathbb{Z}[G]$-annihilator of the roots of unity in $K$ if $G$ is abelian.\ Now Nickel's formulation of the non-abelian Brumer conjecture is
\begin{conj}[$B(K/k,S)$]\label{conj:brumer}
Let $S$ be a finite set of places of $k$ which contains all ramifying places and all infinite places of $k$.\ Then 
\begin{itemize}
 \item $\mathfrak{A}_S \theta_S \subset \mathcal{ I}(\mathbb{Z}[G])$
 \item For any $x \in \mathcal{ H}(\mathbb{Z}[G])$,\ $x\mathfrak{A}_S \theta_S \subset \Ann_{\mathbb{Z}[G]}(Cl(K))$.\ 
\end{itemize}
\end{conj}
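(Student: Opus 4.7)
The plan is to exploit the monomial hypothesis on $G$ to reduce Conjecture $B(K/k,S)$ to the (abelian) Brumer conjecture applied to a list of abelian subextensions of $K/k$. Since $G$ is monomial, every $\chi \in \Irr G$ is of the form $\chi = \Ind_{H_\chi}^G \phi$ for some subgroup $H_\chi$ of $G$ and some $1$-dimensional character $\phi$ of $H_\chi$. Via the Wedderburn decomposition
\[
\zeta(\mathbb{Q}[G]) \cong \bigoplus_{\chi \in \Irr G/\sim} \mathbb{Q}(\chi),
\]
together with Lemma 2.1 and Lemma 2.3, an element of $\zeta(\mathbb{Q}[G])$ (respectively of $\mathfrak{F}(\mathbb{Z}[G])$) is determined by its $\chi$-components, and those components coincide with the corresponding $\phi$-components of a canonically attached element of $\zeta(\mathbb{Q}[H_\chi])$. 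This reduces matters to checking the two containments \textbf{character by character} after passing to $H_\chi$.

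First I would associate to each odd $\chi = \Ind_{H_\chi}^G \phi$ the intermediate fields $k_\chi := K^{H_\chi}$ and $K_\chi := K^{\ker \phi'}$, where $\phi'$ is the character of $H_\chi/\ker\phi$ that inflates to $\phi$, so that $K_\chi/k_\chi$ is an \emph{abelian} CM-extension. Inductivity of Artin $L$-functions gives $L_S(s,\overline{\chi},K/k) = L_{S(k_\chi)}(s,\overline{\phi}',K_\chi/k_\chi)$, and a parallel identity holds for the Euler factors $\delta_T$ and for the elements generating $\mathfrak{A}_S$. Combined with formula (\ref{eq:eq1}) and Lemma 2.1, this translates the $\chi$-component of $\mathfrak{A}_S \theta_{K/k,S}$ into the $\phi'$-component of the analogous abelian object $\mathfrak{A}_{S(k_\chi)} \theta_{K_\chi/k_\chi,S(k_\chi)}$. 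The first bullet of the conjecture (integrality) should then fall out from this identification together with Proposition 2.5 and the abelian Stickelberger integrality of \cite{DR}, \cite{Ba}, \cite{Ca}.

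To obtain the annihilation bullet, I would invoke (the relevant part of) Brumer's conjecture for each abelian $K_\chi/k_\chi$ in the resulting list, which yields an element of $\zeta(\mathbb{Z}[H_\chi/\ker\phi'])$ annihilating $Cl(K_\chi)$. The task is then to transport this annihilation up to $Cl(K)$ via the natural inclusion $Cl(K_\chi) \hookrightarrow Cl(K)$ (equivalently, via the restriction map on ideals), and to check that for $x \in \mathfrak{F}(\mathbb{Z}[G])$ the projector $\pr_\chi$ composed with multiplication by $x$ actually factors through this inclusion; the central conductor $\mathfrak{F}$ is present precisely to absorb the denominators arising from $e_\chi = \chi(1)|G|^{-1}\sum \chi(g^{-1})g$, and Lemma 2.3 is what makes this absorption possible uniformly across the characters.

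The main obstacle, as I see it, is not the formal character bookkeeping but the bookkeeping of the place sets: the hypothesis $Hyp(S,T)$ on $K/k$ does not immediately descend to a hypothesis $Hyp(S(k_\chi),T(k_\chi))$ on $K_\chi/k_\chi$, and the new flexibility advertised in the introduction (that $S$ need not contain all ramified primes) has to be extracted by a careful comparison of the Euler factors $\epsilon_S$ at the ramified primes on both sides, using Lemma 2.7. A secondary difficulty is that the natural map $Cl(K_\chi) \to Cl(K)$ is neither injective nor surjective in general, so the abelian annihilation has to be supplemented by a capitulation/cokernel argument before it can be read as annihilation of the $e_\chi$-component of $Cl(K)$; this is where the list of abelian subextensions appearing in Theorems \ref{thm:brumer0}--\ref{thm:brumer-stark0} will need to be chosen more generously than just $\{K_\chi/k_\chi\}_\chi$.
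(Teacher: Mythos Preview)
The statement you are attempting to prove is a \emph{conjecture}; the paper does not prove $B(K/k,S)$ at all. What the paper actually proves (Theorem~4.1) is the \emph{weak} variant $B_w(K/k,S)$ of Conjecture~3.3, conditional on the abelian Brumer conjecture for the list~(\ref{eq:list}). Your outlined strategy, even if executed flawlessly, lands in $B_w$ rather than $B$: the character-by-character integrality argument (your first bullet) only places $\mathfrak{A}_S\theta_S$ in the maximal order $\zeta(\Lambda')=\bigoplus_\chi \mathfrak{o}_\chi$, not in the genuinely smaller module $\mathcal{I}(\mathbb{Z}[G])$; and your annihilation argument explicitly restricts to $x\in\mathfrak{F}(\mathbb{Z}[G])$, whereas $B(K/k,S)$ requires $x\in\mathcal{H}(\mathbb{Z}[G])$. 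The gap between $\mathfrak{F}$ and $\mathcal{H}$, and between $\zeta(\Lambda')$ and $\mathcal{I}$, is exactly why the paper states only the weak conclusion; nothing in Lemma~2.1 or Lemma~2.3 bridges it (Proposition~2.2 does, but only under extra hypotheses on $p$ and $\mathcal{I}(\mathbb{Z}_p[G])$, cf.\ Lemma~3.7).

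Two further points where your plan diverges from what actually works. First, your worry about the map $Cl(K_\chi)\to Cl(K)$ and a ``capitulation/cokernel argument'' is unnecessary. The paper does not use this map; it uses the identity $\pr_{\phi}=\pr_{\phi'}\cdot\Norm_{K/K_\chi}$, so that $x\theta_{K/k}^T$ acting on an ideal $\mathfrak{A}$ of $K$ becomes an element of $\mathbb{Z}[\Gal(K_\chi/k_\chi)]$ acting on $\Norm_{K/K_\chi}(\mathfrak{A})$. Abelian Brumer then says this is principal \emph{in $K_\chi$}, hence a fortiori principal in $K$. No injectivity of $Cl(K_\chi)\to Cl(K)$ is needed, and the list~(\ref{eq:list}) is not enlarged for this reason. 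Second, the set $T$ does not descend na\"ively: one must replace $\delta_T$ by the product $\delta_T'=\prod_{\mathfrak p\in T}\prod_m(1-\phi_{\mathfrak P}^{-f_{\mathfrak p_m'}}N\mathfrak p^{f_{\mathfrak p_m'}})$ over the primes of $k_\chi$ above $T$, and check that $\delta_T'|_{K_\chi}$ still annihilates $\mu(K_\chi)$; this is the content of the proof of Lemma~4.4, not a side issue.
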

\begin{rem}
If $G$ is abelian,\ we have $\mathcal{ I}(\mathbb{Z}[G]) = \mathcal{ H}(\mathbb{Z}[G])=\mathbb{Z}[G]$ and can take $x=1$.\ So we can recover usual  Brumer's conjecture from the conjecture \ref{conj:brumer} if $G$ is abelian.\ 
\end{rem}
In this paper we actually study the following weaker version of the above conjecture.\ 
\begin{conj}[$B_w(K/k,S)$]\label{conj:wbrumer}
Let $S$ be a finite set of places of $k$ which contains all ramifying places and all infinite places of $k$ and let $\Lambda^{\prime}$ be a maximal $\mathbb{Z}$-order in $\mathbb{Q}[G]$ which contains $\mathbb{Z}[G]$.\ Then 
\begin{itemize}
 \item $\mathfrak{A}_S \theta_S \subset \zeta(\Lambda^{\prime})$
 \item For any $x \in \mathfrak{F}(\mathbb{Z}[G])$,\ $x\mathfrak{A}_S \theta_S \subset \Ann_{\mathbb{Z}[G]}(Cl(K))$.\ 
\end{itemize}
\end{conj}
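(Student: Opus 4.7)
The plan is to reduce this non-abelian conjecture to the abelian Brumer conjecture on a carefully chosen family of subextensions of $K/k$, exploiting the monomial hypothesis that will be in force in the main theorem. The key structural input is that every irreducible character $\chi$ of $G$ factors as $\chi = \Ind_H^G \phi$ for some subgroup $H \leq G$ and some one-dimensional character $\phi$ of $H$; since $\phi$ is one-dimensional it descends to a faithful character of the abelian quotient $H/\ker\phi$, and the associated fixed field $F_\phi := K^{\ker\phi}$ is an abelian CM-extension of $K^H$ to which classical Brumer can be applied.

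First, for the integrality statement $\mathfrak{A}_S\theta_S \subset \zeta(\Lambda')$, I would decompose $\theta_{K/k,S}$ character by character. Using the inductivity of Artin $L$-series, $L_S(0,\bar\chi,K/k) = L_{S_H}(0,\bar\phi,F_\phi/K^H)$ where $S_H$ is the pullback of $S$ to $K^H$, and a parallel identification of the Euler factors $\delta_T$. Combining this with the expression for $e_\chi$ given in Lemma 2.2 and the identification $\zeta(\Lambda') \cong \bigoplus \mathfrak{o}_\chi$, the desired integrality on the $\chi$-component is precisely the integrality statement of the abelian Brumer-type integrality for $F_\phi/K^H$, which is the classical result of \cite{DR},\cite{Ba},\cite{Ca}.

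Second, for the annihilation statement, fix $x\in\mathfrak{F}(\mathbb{Z}[G])$. By Lemma 2.4, the same $x$ also lies in $\mathfrak{F}(\mathbb{Z}[H])$ for each subgroup $H$ used above, and its expression in terms of the $\pr_{\phi^h}$ is explicit. Writing $x\mathfrak{A}_S\theta_S$ as a sum of $\chi$-components and using Lemma 2.2 to re-expand each $e_\chi$ as a sum of $e_{\phi^h}$, one sees that $x\mathfrak{A}_S\theta_S$ is identified, on the abelian quotient $H/\ker\phi$, with an element of the shape $y\,\mathfrak{A}_{S_H}\theta_{F_\phi/K^H,S_H}$ for some $y$ in the group ring of $\Gal(F_\phi/K^H)$. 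The abelian Brumer conjecture for $F_\phi/K^H$ (assumed for the specified list of subextensions) then annihilates $Cl(F_\phi)$, and this annihilation is transported to $Cl(K)$ via the norm $Cl(K) \to Cl(F_\phi)$ together with the compatibility of Galois action under induction.

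The principal obstacle I expect is the control of the set $S$. The conjecture as stated requires $S$ to contain all ramified places, but the sharper Theorem 1.1 allows ramified primes of $K/k$ to be omitted; this forces precise bookkeeping of the Euler factors $\epsilon_S$ of Lemma 2.5 at primes which are unramified in $F_\phi/K^H$ but ramified in $K/F_\phi$. Such factors must be shown to be absorbed by the central conductor $\mathfrak{F}(\mathbb{Z}[G])$ via the Jacobinski formula \eqref{eq:cond}, which is where the explicit list of abelian subextensions promised in \S\ref{sec:main} will play its role. A secondary subtlety is to verify that the norm transport of annihilation from $Cl(F_\phi)$ to $Cl(K)$ is compatible with left-multiplication by the element $x\in\mathfrak{F}(\mathbb{Z}[G])$; this will amount to matching the Galois equivariance of the norm with the equivariance built into $\mathfrak{F}$, and I expect Lemma 2.4 to be exactly what makes this matching go through.
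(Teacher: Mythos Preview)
Note first that the statement you are addressing is a \emph{conjecture}; the paper does not prove it unconditionally but only under the hypotheses of Theorem~\ref{thm:brumer} (that $G$ is monomial and that abelian Brumer holds for the subextensions in the list~\eqref{eq:list}). Your proposal is in fact a sketch of the paper's proof of Theorem~\ref{thm:brumer}, and the strategy you describe---decompose $x\theta_S^T$ character by character via Lemmas~\ref{lem:idempotent} and~\ref{lem:conductor}, invoke abelian Brumer on each $K_{i,j}/k_i$, and transport the annihilation of $Cl(K_{i,j})$ back to $Cl(K)$ through the identity $\pr_{\phi_{i,j}^{g}} = \pr_{\phi_{i,j}^{\prime g}}\cdot\Norm_{K/K_{i,j}}$---is exactly what the paper does, including the use of inductivity and inflation-invariance of Artin $L$-values for the integrality part (Lemma~\ref{lem:stic}).

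One point where you overcomplicate matters: the ``principal obstacle'' you anticipate, namely absorbing the Euler factors at ramified primes into $\mathfrak{F}(\mathbb{Z}[G])$ via the Jacobinski formula, is not needed. The paper disposes of the ramified Euler factors in one line: since $\epsilon_S \in \zeta(\Lambda')$ by Lemma~\ref{lem:epsilon} and $\mathfrak{F}(\mathbb{Z}[G])$ is an ideal of $\zeta(\Lambda')$, one has $x\,\theta_{K/k,S}^T = (x\epsilon_{S\setminus S_\infty})\,\theta_{K/k,S_\infty}^T$ with $x\epsilon_{S\setminus S_\infty}$ still in $\mathfrak{F}(\mathbb{Z}[G])$, so it suffices to treat $S = S_\infty$. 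No separate bookkeeping of primes ramified in $K/F_\phi$ but unramified in $F_\phi/K^H$ is required.
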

\begin{rem}\label{rem:brumer}
Even if $G$ is a nontrivial abelian group,\ we always have $\Lambda^{\prime} \supsetneq \mathbb{Z}[G]$.\ Moreover,\ $\mathfrak{F}(G)$ does not contain the element $1$.\ Hence we can not recover the usual Brumer's conjecture from the conjecture \ref{conj:wbrumer} even in the case $G$ is abelian.\ Roughly speaking,\ Conjecture \ref{conj:wbrumer} says $|G|\theta^{T}_{K/k,S}$ annihilates $Cl(K)$ if $G$ is abelian.\ 
\end{rem}
Replacing $\mathbb{Z}$,\ $\mathbb{Q}$ and $Cl(K)$ with $\mathbb{Z}_p$,\ $\mathbb{Q}_p$ and $Cl(K) \otimes \mathbb{Z}_p$ respectively,\ we can decompose $B(S,K/k)$ resp. $B_w(S,K/k)$ into local conjectures $B(S,K/k,p)$ resp.\ $B_w(S,K/k,p)$.\ 
\par 
We call $\alpha \in K^{*}$ an anti-unit if $\alpha^{1 + j} = 1$ and set $w_K = \nr(|\mu(K)|)$.\ We remark that $w_k$ is no longer a rational integer but an element in $\zeta(\Lambda^{\prime})$ of the form $\sum_{\chi \in \Irr G}|\mu(K)|^{\chi(1)}e_\chi$.\ We define 
\[
 S_\alpha := \{ \mathfrak{p}\mid \text{$\mathfrak{p}$ is a prime of $k$}\  \text{and}\  \mathfrak{p}\ \text{divides}\ N_{K/k}\alpha \}
\] 
where $N_{K/k}$ is the usual  norm of $K$ over $k$.\ Then the non-abelian Brumer-Stark conjecture asserts 
\begin{conj}[$BS(K/k,S)$]\label{conj:brumer-stark}
Let $S$ be a finite set of places which contains all ramifying places and all infinite places of $k$.\ Then 
\begin{itemize}
 \item $w_K \theta_{K/k,S} \in \mathcal{ I}(\mathbb{Z}[G]) $
 \item For any fractional ideal $\mathfrak{A}$ of $K$ and for each $x \in \mathcal{ H}(\mathbb{Z}[G])$,\ there exists an anti-unit $\alpha = \alpha(\mathfrak{A},S,x)$ such that $\mathfrak{A}^{xw_K \theta_{K/k,S}} = (\alpha)$.\ 
\end{itemize}
Moreover,\ for any finite set T of places of $k$ which satisfies $Hyp(S \cup S_{\alpha},T)$,\ there exists $\alpha_T \in E_{S_\alpha}^{T}(K)$ such that 
\begin{equation}\label{eq:starkunit}
 \alpha^{z \delta_T} = \alpha_T^{zw_K}
\end{equation}
for each $z \in \mathcal{ H}(\mathbb{Z}[G])$.\ 
\end{conj}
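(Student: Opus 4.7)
The plan is to reduce this non-abelian conjecture to the abelian Brumer-Stark conjecture (Conjecture \ref{conj:conj01}) for a suitable family of abelian subextensions, exploiting that the monomial hypothesis on $G$ lets us express every irreducible character as induced from a linear character of some subgroup. First I would decompose $\theta_{K/k,S}$ along central idempotents: by Lemma \ref{lem:idempotent}, for each irreducible $\chi = \Ind_H^G \phi$ with $\phi$ linear, the primitive central idempotent $e_\chi$ is a sum of $e_{\phi^h}$ over an appropriate Galois orbit, and the induction invariance $L_S(s,\chi,K/k) = L_S(s,\phi,K/K^H)$ together with the characterization (\ref{eq:eq1}) identifies the $\chi$-component of $\theta_{K/k,S}$ with a Stickelberger element for the abelian CM-extension cut out by $\ker\phi$ inside $K^H$. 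This handles the first bullet: integrality $w_K\,\theta_{K/k,S}\in\zeta(\Lambda^{\prime})$ follows from the known integrality of Stickelberger elements for abelian extensions, using the decomposition $\zeta(\Lambda^{\prime})=\bigoplus_{\chi}\mathfrak{o}_{\chi}$ of \S\ref{sec:groupalg}.

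Next, to produce the anti-unit $\alpha=\alpha(\mathfrak{A},S,x)$ with $\mathfrak{A}^{xw_K\theta_{K/k,S}} = (\alpha)$, I would argue component by component. Fix $x\in\mathcal{H}(\mathbb{Z}[G])$ written as $\sum_{\sigma}x_{\chi}^{\sigma}\pr_{\chi^{\sigma}}$ with $x_{\chi}\in\mathfrak{o}_{\chi}$. By Lemma \ref{lem:conductor}, the same element also lies in $\mathfrak{F}(\mathbb{Z}[H])$, so the abelian Brumer--Stark conjecture applied inside $K^H$ supplies an anti-unit $\beta_{\phi}$ realizing the corresponding abelian principal-ideal relation. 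The gluing step is to assemble $\{\beta_{\phi}\}_{\phi}$ into a single $\alpha\in K^{*}$ whose divisor is the one prescribed: the idempotent formula of Lemma \ref{lem:idempotent} together with Galois descent from $\mathbb{Q}(\phi)$ to $\mathbb{Q}(\chi)$ forces the existence of $\alpha$ with the correct character components, the anti-unit condition descends because $j$ is central in $G$ and each $\beta_{\phi}$ is already an anti-unit, and the Stark-unit relation (\ref{eq:starkunit}) reduces to the abelian analogue in each piece after multiplying by the $\delta_T$-factor, noting that $\delta_T$ restricts correctly to the subgroup $H$.

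The main obstacle will be making the character-by-character construction genuinely produce an element of $K^{*}$ (rather than merely of $K^{*}\otimes\mathbb{Q}$) generating the prescribed fractional ideal of $K$, and checking Galois compatibility of the $\beta_{\phi}$ simultaneously under the $\Gal(\mathbb{Q}(\phi)/\mathbb{Q}(\chi))$-action and the $G$-conjugation action on $H$. This is precisely where the conductor factor $x\in\mathcal{H}(\mathbb{Z}[G])$ (or $\mathfrak{F}(\mathbb{Z}[G])$ in the weak version of the conjecture) must absorb the denominators that arise from the mismatch between $\zeta(\Lambda^{\prime})$ and $\zeta(\mathbb{Z}[G])$, via Jacobinski's central conductor formula (\ref{eq:cond}) and Proposition \ref{prop:conductor}. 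I expect that for general monomial $G$ one can only obtain the weak form of the conjecture along these lines, since Proposition \ref{prop:conductor} requires hypotheses on the character degrees that fail in general; passage to the full conjecture with $\mathcal{H}(\mathbb{Z}[G])$ should require a further input such as the Iwasawa main conjecture, which is why the paper's main theorems target the weak conjectures and restrict to specific monomial families ($D_{4p}$, $Q_{2^{n+2}}$, $\mathbb{Z}/2\mathbb{Z}\times A_4$) together with primes $l$ behaving well in the relevant cyclotomic field.
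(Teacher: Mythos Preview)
The statement you are attempting to prove is a \emph{conjecture}; the paper states it as Conjecture~\ref{conj:brumer-stark} and provides no proof of it. What the paper does prove is the weak version $BS_w(K/k,S)$ (Conjecture~\ref{conj:wbrumer-stark}, with $\mathcal{H}$ and $\mathcal{I}$ replaced by $\mathfrak{F}$ and $\zeta(\Lambda')$), conditionally on the abelian Brumer--Stark conjecture for the subextensions in the list~(\ref{eq:list}); this is Theorem~\ref{thm:brumer-stark}. Your outline is essentially a sketch of that proof: decompose along central idempotents via Lemma~\ref{lem:idempotent}, use induction/inflation invariance of $L$-functions to identify each $\chi$-component with an abelian Stickelberger element (Lemma~\ref{lem:stic}), apply the abelian Brumer--Stark conjecture in each subextension $K_{i,j}/k_i$, and glue the resulting anti-units using Lemma~\ref{lem:conductor}.

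Your final paragraph already identifies why this cannot yield the full conjecture, and the gap is concrete. The step ``by Lemma~\ref{lem:conductor}, the same element also lies in $\mathfrak{F}(\mathbb{Z}[H])$'' is exactly where the argument breaks for $x\in\mathcal{H}(\mathbb{Z}[G])$: Lemma~\ref{lem:conductor} is stated and proved only for elements of $\mathfrak{F}(\mathbb{Z}[G])$, and no analogue is available for $\mathcal{H}(\mathbb{Z}[G])$, so you cannot push a general $x\in\mathcal{H}(\mathbb{Z}[G])$ down to the abelian subextension. Likewise, the first bullet $w_K\theta_{K/k,S}\in\mathcal{I}(\mathbb{Z}[G])$ is strictly stronger than landing in $\zeta(\Lambda')$, and the abelian input only gives the latter (cf.\ Lemma~\ref{lem:stic} and the example in \S\ref{sec:example}, where $\theta_{K/\mathbb{Q},S_\infty}\notin\mathcal{I}(\mathbb{Z}_2[G])$). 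So what you have written is not a proof of $BS(K/k,S)$; it is a correct sketch of the paper's proof of Theorem~\ref{thm:brumer-stark}, together with an accurate diagnosis of why the method stops at the weak conjecture.
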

\begin{rem}
If $G$ is abelian,\ we can take $x=z=1$.\  By \cite[Proposition 1.2]{Ta84},\ the statement (\ref{eq:starkunit}) on $\alpha$ is equivalent to the assertion that $K(\alpha^{1/w_K})/k$ is abelian.\ Hence we can regard Conjecture \ref{conj:brumer-stark} as a non-abelian generalization of the usual Brumer-Stark conjecture.\  
\end{rem}
As well as the non-abelian Brumer conjecture,\ we treat the following weaker version of the non-abelian Brumer-Stark conjecture.\ 
\begin{conj}[$BS_w(K/k,S)$]\label{conj:wbrumer-stark}
Let $S$ be a finite set of places which contains all ramifying places and all infinite places of $k$ and let $\Lambda^{\prime}$ be a maximal $\mathbb{Z}$-order in $\mathbb{Q}[G]$ which contains $\mathbb{Z}[G]$.\ Then 
\begin{itemize}
 \item $w_K \theta_{K/k,S} \in \zeta(\Lambda^{\prime})$
 \item For any fractional ideal $\mathfrak{A}$ of $K$ and for each $x \in \mathfrak{F}(\mathbb{Z}[G])$,\ there exists an anti-unit $\alpha = \alpha(\mathfrak{A},S,x)$ such that $\mathfrak{A}^{xw_K \theta_{K/k,S}} = (\alpha)$.\
\end{itemize}
Moreover,\ for any finite set T of places of $k$ which satisfies $Hyp(S \cup S_{\alpha},T)$,\ there exists $\alpha_T \in E_{S_\alpha}^{T}(K)$ such that 
\begin{equation}\label{eq:starkunit2}
 \alpha^{z \delta_T} = \alpha_T^{zw_K}
\end{equation}
for each $z \in \mathfrak{F}(\mathbb{Z}[G])$.\ 
\end{conj}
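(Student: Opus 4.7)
The plan is to exploit monomiality in exactly the same way that Lemma~\ref{lem:idempotent} and Lemma~\ref{lem:conductor} already set up in the preliminaries: decompose everything character by character, and on each simple component reduce the non-abelian statement to an assertion about an abelian subextension $K/K^{H}$ where $H$ is a subgroup of $G$ such that an irreducible $\chi$ of $G$ is induced from a $1$-dimensional character $\phi$ of $H$.

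More precisely, I would proceed in four steps. First, using the isomorphism $\zeta(\mathbb{Q}[G]) \cong \bigoplus_{\chi/\sim} \mathbb{Q}(\chi)$ from \S\ref{sec:groupalg} together with inductivity of Artin $L$-functions, observe that the $\chi$-component $\chi(\theta_{K/k,S})$ of the Stickelberger element equals $L_{S_H}(0,\check\phi,K/K^H)$ up to the factor $\chi(1)\delta_T(\chi)$ appearing in (\ref{eq:eq1}), where $S_H$ is the set of places of $K^H$ above those of $S$. Thus the non-abelian Stickelberger element for $K/k$ is essentially packaged out of the abelian Stickelberger elements of the various extensions $K^{\ker\phi}/K^H$ (note $\phi$ is $1$-dimensional so it factors through the abelianization of $H$). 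Second, by Lemma~\ref{lem:conductor}, an arbitrary $x \in \mathfrak{F}(\mathbb{Z}[G])$ restricts in each $\chi$-slot to an element that also lies in $\mathfrak{F}(\mathbb{Z}[H])$, so the product $xw_K\theta_{K/k,S}$ lives, component by component, inside the image of the abelian conductor for each $(H,\phi)$.

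Third, for each Galois orbit of induced pairs $(H,\phi)$, apply the classical Brumer-Stark conjecture to the abelian CM-extension $K^{\ker\phi}/K^H$ (or $K/K^H$) together with the fractional ideal obtained by pushing $\mathfrak{A}$ down via the norm or suitable idempotent projection: this produces an anti-unit $\alpha_{\chi,H,\phi} \in K^{\ker\phi,*}$ whose principal ideal equals the expected power of the projected $\mathfrak{A}$. To assemble the single anti-unit $\alpha \in K^*$ required by Conjecture~\ref{conj:wbrumer-stark}, I would take the product of the (suitably lifted) $\alpha_{\chi,H,\phi}$ in $K$, where the expression of $e_\chi$ in Lemma~\ref{lem:idempotent} tells one exactly which products to form so that the ideal-factorizations glue correctly; the anti-unit property is preserved under products and the Galois action sums in the formula. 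The Stark unit condition (\ref{eq:starkunit2}) for $T$-modifications should transfer from the corresponding Stark unit condition on each abelian $K^{\ker\phi}/K^H$, since the $\delta_T$-factor also behaves multiplicatively under induction of characters.

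Fourth, one has to address the subtle point on $S$: even if $S$ fails to contain all ramifying primes of $K/k$, the intermediate abelian extensions may be unramified at the missing primes, so the Euler factor $\epsilon_S$ (Lemma~\ref{lem:epsilon}) accounts for the discrepancy and stays integral at $\Lambda'$; this explains how one can drop the ``$S \supseteq S_{\mathrm{ram}}$'' hypothesis. The main obstacle I anticipate is step three, the gluing: the abelian Brumer-Stark conjecture only determines $\alpha_{\chi,H,\phi}$ up to a root of unity in $K^{\ker\phi}$, and reconciling all these indeterminacies so that a single global anti-unit of $K^*$ materialises — in a way that is compatible with every $x \in \mathfrak{F}(\mathbb{Z}[G])$ simultaneously and with the $T$-refinement — is exactly the place where the ``weak'' formulation (with $\mathfrak{F}(\mathbb{Z}[G])$ and $w_K$) becomes essential, and where the hypothesis that $G$ is monomial (rather than merely supersolvable or arbitrary) appears to be used in full force.
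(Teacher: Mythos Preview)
The statement you were asked to prove is Conjecture~\ref{conj:wbrumer-stark}, i.e.\ $BS_w(K/k,S)$ itself, and the paper contains \emph{no} proof of it: it is stated as an open conjecture. What the paper does prove is the conditional Theorem~\ref{thm:brumer-stark}, namely that \emph{if} $G$ is monomial \emph{and} the abelian Brumer--Stark conjecture holds for each extension in the explicit list~(\ref{eq:list}), \emph{then} $BS_w(K/k,S)$ holds. Your write-up silently imports both of these hypotheses (monomiality of $G$ and the truth of the abelian Brumer--Stark conjecture for the relevant subextensions), so what you have actually sketched is a proof of Theorem~\ref{thm:brumer-stark}, not of Conjecture~\ref{conj:wbrumer-stark}. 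As an unconditional proof of the conjecture, the proposal therefore has a genuine gap: nowhere do you establish the abelian input.

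Read instead as a sketch of Theorem~\ref{thm:brumer-stark}, your outline is essentially the paper's own argument. The paper proceeds exactly by (i) rewriting $\theta_{K/k,S}$ character by character via inductivity and Lemma~\ref{lem:stic}, (ii) using Lemma~\ref{lem:conductor} to land each $\chi$-piece of $x\in\mathfrak{F}(\mathbb{Z}[G])$ inside $\mathfrak{F}(\mathbb{Z}[H_i])$, (iii) applying abelian Brumer--Stark to $\Norm_{K/K_{i,j}}(\mathfrak{A})$ over $K_{i,j}/k_i$ to get anti-units $\alpha_{\phi_{i,j}}$, scaling by $c_{\phi_{i,j}}=|\mu(K)|^{\chi_i(1)}/|\mu(K_{i,j})|$ to pass from $|\mu(K_{i,j})|$ to the non-abelian $w_K$, and then taking the product $\alpha=\prod_{\chi}\alpha_\chi$; the $T$-refinement is handled in parallel. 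Your step four on dropping $S\supseteq S_{\mathrm{ram}}$ via $\epsilon_S\in\zeta(\Lambda')$ is also the paper's observation. The one concrete detail your sketch omits, and which the paper makes explicit, is the scaling factor $c_{\phi_{i,j}}$: without it the abelian anti-units satisfy $\Norm_{K/K_{i,j}}(\mathfrak{A})^{|\mu(K_{i,j})|\theta}=(\alpha_{\phi_{i,j}})$, whereas the non-abelian statement requires the exponent $|\mu(K)|^{\chi_i(1)}$ on the $\chi_i$-component; this is exactly what makes the ``gluing'' you worry about go through, and it does not rely on resolving any root-of-unity ambiguity.
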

\begin{rem}
For the same reason as remark \ref{rem:brumer},\ we can not recover the usual Brumer-Stark conjecture from the conjecture \ref{conj:wbrumer-stark} in the case $G$ is abelian.\ 
\end{rem}
Replacing $\mathbb{Z}$,\ $\mathbb{Q}$ and $\mathfrak{A}$ with $\mathbb{Z}_p$,\ $\mathbb{Q}_p$ and $\mathfrak{A}$ whose class in $Cl(K)$ is of $p$-power order respectively and in the equation (\ref{eq:starkunit}),\ (\ref{eq:starkunit2}) replacing $\omega_K$ with $\omega_{K,p} := \nr(|\mu_K \otimes \mathbb{Z}_p|)$,\ we can decompose $BS(S,K/k)$ resp. $BS_w(S,K/k)$ into local conjectures $BS(S,K/k,p)$ resp.\ $BS_w(S,K/k,p)$.\ 
\par
In the abelian case,\ the Brumer-Stark conjecture implies Brumer's conjecture,\ and the same claim holds in non-abelian cases as follows:
\begin{lem}[\cite{Ni11},\ Lemma 2.9]\label{lem:relation} \ 
\begin{itemize}
 \item $BS(K/k,S)$ (resp. $BS(K/k,S,p)$) implies $B(K/k,S)$ (resp.\ $B(K/k,S,p)$)
 \item $BS_w(K/k,S)$ (resp.\ $BS_w(K/k,S,p)$) implies $B_w(K/k,S,)$ (resp.\ $B_w(K/k,S,p)$).\ 
\end{itemize}
\end{lem}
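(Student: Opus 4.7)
The plan is to mirror the classical abelian argument: use the Stark-unit part of the Brumer--Stark conjecture to produce a generator of $\mathfrak{A}^{x\delta_T\theta_{K/k,S}}$ modulo the auxiliary central element $w_K$, and then cancel $w_K$ using the torsion-freeness of the group of fractional ideals.

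For the annihilation part of $B(K/k,S)$, fix a fractional ideal $\mathfrak{A}$ of $K$, an element $x \in \mathcal{H}(\mathbb{Z}[G])$, and $\delta_T \in \mathfrak{A}_S$ corresponding to a set $T$ with $Hyp(S,T)$ satisfied. By Brumer--Stark, obtain an anti-unit $\alpha\in K^\times$ with $\mathfrak{A}^{xw_K\theta_{K/k,S}}=(\alpha)$. By enlarging $T$ so that $Hyp(S\cup S_\alpha,T)$ also holds (this only modifies $\delta_T$ by another generator of $\mathfrak{A}_S$), the Stark-unit condition supplies $\alpha_T \in E_{S_\alpha}^T(K)$ with $\alpha^{z\delta_T}=\alpha_T^{zw_K}$ for every $z \in \mathcal{H}(\mathbb{Z}[G])$. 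Passing to principal fractional ideals and substituting for $(\alpha)$ yields
\[
\mathfrak{A}^{xzw_K\delta_T\theta_{K/k,S}}=(\alpha_T)^{zw_K}.
\]

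The crucial step is to cancel the factor $w_K$. The central element $w_K=\sum_\chi |\mu(K)|^{\chi(1)}e_\chi$ is a unit of $\zeta(\mathbb{Q}[G])$, so it acts invertibly on the group of fractional ideals after tensoring with $\mathbb{Q}$; because the group of fractional ideals is torsion-free (free abelian on primes), working isotypically on each $\chi$-component and using the elementary fact that $I^n=J^n$ forces $I=J$ for the positive integer $n=|\mu(K)|^{\chi(1)}$ yields
\[
\mathfrak{A}^{xz\delta_T\theta_{K/k,S}}=(\alpha_T^z)
\]
inside the group of fractional ideals of $K$. Hence $xz\delta_T\theta_{K/k,S}$ annihilates $[\mathfrak{A}]$ in $Cl(K)$. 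Since $\mathcal{H}(\mathbb{Z}[G])$ is a $\zeta(\Lambda')$-ideal, letting $x,z$ and $\delta_T$ vary recovers the inclusion $\mathcal{H}(\mathbb{Z}[G])\cdot\mathfrak{A}_S\theta_{K/k,S}\subset\Ann_{\mathbb{Z}[G]}(Cl(K))$ required by $B(K/k,S)$. For the first bullet $\mathfrak{A}_S\theta_{K/k,S}\subset\mathcal{I}(\mathbb{Z}[G])$, note that $\delta_T=\prod_{\mathfrak{p}\in T}\nr(1-\phi_\mathfrak{P}^{-1}N\mathfrak{p})\in\mathcal{I}(\mathbb{Z}[G])$ by multiplicativity of the reduced norm; combining this with the Brumer--Stark containment $w_K\theta_{K/k,S}\in\mathcal{I}(\mathbb{Z}[G])$ and the same cancellation of $w_K$ at the level of central conductors gives $\delta_T\theta_{K/k,S}\in\mathcal{I}(\mathbb{Z}[G])$.

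The implication $BS_w\Rightarrow B_w$ runs identically after replacing $\mathcal{H}$ with $\mathfrak{F}$ and $\mathcal{I}$ with $\zeta(\Lambda')$ throughout (invoking Lemma \ref{lem:conductor} where needed to stay inside $\mathfrak{F}(\mathbb{Z}[G])$), and the $p$-local statements are obtained by tensoring everything with $\mathbb{Z}_p$. The main obstacle is the $w_K$-cancellation: in the abelian case this is the elementary torsion-freeness of the group of fractional ideals with $w_K=|\mu(K)|\in\mathbb{Z}$, but in the non-abelian case $w_K$ is no longer a single integer and one must ensure the identities survive inside the integral group of fractional ideals rather than merely after tensoring with $\mathbb{Q}$. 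This requires a careful isotypic decomposition and the injectivity of each $|\mu(K)|^{\chi(1)}$-th power map componentwise.
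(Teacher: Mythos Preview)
The paper does not give its own proof of this lemma; it is simply quoted from \cite[Lemma~2.9]{Ni11}. So there is nothing to compare against in the present paper, and your task reduces to reproducing Nickel's argument correctly.

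Your overall strategy is the right one, but the execution has a genuine gap. After cancelling $w_K$ you obtain
\[
\mathfrak{A}^{xz\,\delta_T\,\theta_{K/k,S}}=(\alpha_T^{\,z}),
\]
and you conclude that the elements $xz\delta_T\theta_{K/k,S}$ annihilate $Cl(K)$. Letting $x,z\in\mathcal H(\mathbb Z[G])$ vary only shows
\[
\mathcal H(\mathbb Z[G])^{2}\cdot\mathfrak A_S\cdot\theta_{K/k,S}\subset \Ann_{\mathbb Z[G]}(Cl(K)),
\]
which is strictly weaker than $B(K/k,S)$ unless $\mathcal H^2=\mathcal H$, and that is not true in general. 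Your appeal to ``$\mathcal H$ is a $\zeta(\Lambda')$-ideal'' does not remove the extra factor of $z$. The correct way to eliminate $z$ is to use it \emph{before} passing to ideals: since $\alpha_T$ does not depend on $z$, the Stark relation says $(\alpha^{\delta_T}\alpha_T^{-w_K})^{z}=1$ for every $z\in\mathcal H(\mathbb Z[G])$. As $|G|\in\mathfrak F(\mathbb Z[G])\subset\mathcal H(\mathbb Z[G])$, this forces $\alpha^{\delta_T}\alpha_T^{-w_K}\in\mu(K)$, hence $(\alpha)^{\delta_T}=(\alpha_T)^{w_K}$ as fractional ideals. Substituting $(\alpha)=\mathfrak A^{xw_K\theta}$ and cancelling $w_K$ once (by torsion-freeness of the ideal group) gives $\mathfrak A^{x\delta_T\theta}=(\alpha_T)$, which is exactly what $B(K/k,S)$ requires.

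Two smaller points. First, ``enlarging $T$'' is not the right move: you must prove the annihilation for the \emph{given} $T$, and enlarging $T$ changes $\delta_T$. Instead, replace $\mathfrak A$ within its ideal class by an ideal coprime to $T$; then $(\alpha)=\mathfrak A^{xw_K\theta}$ is also coprime to $T$, so $S_\alpha\cap T=\emptyset$ and $Hyp(S\cup S_\alpha,T)$ is automatic. Second, your argument for the containment $\mathfrak A_S\theta\subset\mathcal I(\mathbb Z[G])$ is too vague: ``cancellation of $w_K$ at the level of central conductors'' is not a well-defined operation, and one should instead invoke the integrality statements for $\theta^T_{K/k,S}$ directly (cf.\ the discussion in \cite{Ni11}).
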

For the local conjectures,\ we can state the relation between usual conjectures and weaker conjectures as follows:
\begin{lem}\label{lem:relation2}
If $\mathcal{ I}(\mathbb{Z}_p[G]) = \zeta(\Lambda^{\prime})$ and the degrees of all irreducible characters of $G$ are prime to $p$,\ 
\begin{itemize}
 \item $B(K/k,S,p)$ holds if and only if $B_w(K/k,S,p)$ holds,\ 
 \item $BS(K/k,S,p)$ holds if and only if $BS_w(K/k,S,p)$ holds.\ 
\end{itemize}

\end{lem}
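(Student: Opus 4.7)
The plan is to derive both equivalences directly from Proposition \ref{prop:conductor}. The key observation is that the two hypotheses of the lemma are exactly the hypotheses of that proposition, and they govern simultaneously the containers appearing in the first bullets of the conjectures (namely $\mathcal{I}(\mathbb{Z}_p[G])$ versus $\zeta(\Lambda^{\prime})$) and the ranges of quantification appearing in the second bullets and in the Stark-unit relations (namely $\mathcal{H}(\mathbb{Z}_p[G])$ versus $\mathfrak{F}(\mathbb{Z}_p[G])$).

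First I would handle the first bullets. By construction of the reduced norm one always has $\mathcal{I}(\mathbb{Z}_p[G]) \subset \zeta(\Lambda^{\prime})$, so the $p$-local conditions $\mathfrak{A}_S \theta_S \in \mathcal{I}(\mathbb{Z}_p[G])$ and $w_K \theta_{K/k,S} \in \mathcal{I}(\mathbb{Z}_p[G])$ are a priori stronger than their counterparts with ambient ring $\zeta(\Lambda^{\prime})$. The first hypothesis $\mathcal{I}(\mathbb{Z}_p[G]) = \zeta(\Lambda^{\prime})$ collapses this inclusion to equality, so the first bullets of $B(K/k,S,p)$ and $B_w(K/k,S,p)$, respectively $BS(K/k,S,p)$ and $BS_w(K/k,S,p)$, become literally the same assertion.

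Next I would treat the second bullets together with the Stark-unit relations. As noted just before Proposition \ref{prop:conductor}, one has the unconditional inclusion $\mathfrak{F}(\mathbb{Z}_p[G]) \subset \mathcal{H}(\mathbb{Z}_p[G])$, so quantifying $x$ (respectively $z$) over the smaller set $\mathfrak{F}(\mathbb{Z}_p[G])$ always produces a weaker statement than quantifying over $\mathcal{H}(\mathbb{Z}_p[G])$; this gives one direction of the equivalence for free. For the converse, I would invoke Proposition \ref{prop:conductor} applied to $\Lambda = \mathbb{Z}_p[G]$: under the two hypotheses of the present lemma it upgrades the inclusion to an equality $\mathcal{H}(\mathbb{Z}_p[G]) = \mathfrak{F}(\mathbb{Z}_p[G])$, so the second bullets, as well as the relations (\ref{eq:starkunit}) and (\ref{eq:starkunit2}), of the weak and full conjectures coincide term by term.

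The main point here is not so much an obstacle as a matter of bookkeeping: one only has to check that all the relevant data ($\mathfrak{A}_S$, $\theta_S$, $w_K$, the anti-units $\alpha$ and the elements $\alpha_T$) behave compatibly under localization at $p$, which is immediate from the fact that $\Lambda^{\prime} \otimes_{\mathbb{Z}} \mathbb{Z}_p$ is a maximal $\mathbb{Z}_p$-order in $\mathbb{Q}_p[G]$ containing $\mathbb{Z}_p[G]$ and that the constructions of Sections \ref{sec:groupalg}--\ref{sec:stick} commute with base change to $\mathbb{Z}_p$. With this in place the lemma is simply a translation of Proposition \ref{prop:conductor} into the language of the four conjectures.
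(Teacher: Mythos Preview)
Your argument is correct and is essentially the one the paper gives: both rest on Proposition \ref{prop:conductor} to identify $\mathcal{H}(\mathbb{Z}_p[G])$ with $\mathfrak{F}(\mathbb{Z}_p[G])$, together with the hypothesis $\mathcal{I}(\mathbb{Z}_p[G])=\zeta(\Lambda^{\prime})$ to match the first bullets. The only cosmetic difference is that the paper singles out the case $p\nmid |G|$ and handles it by citing \cite[Lemma 2.5 and Lemma 2.8]{Ni11}, whereas you treat all primes uniformly; since Proposition \ref{prop:conductor} carries no assumption that $p$ divides $|G|$, your uniform treatment is legitimate and arguably cleaner.
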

\begin{proof}
If $p$ does not divide the order of $G$ (in this case,\ the degrees of irreducible characters are automatically prime to $p$,\ since they have to divide the order of $G$), by \cite[Lemma 2.5 and Lemma 2.8]{Ni11},\ the equivalences hold.\ If $p$ divides the order of $G$,\ by Proposition \ref{prop:conductor},\ we have ${\mathcal H}(\mathbb{Z}_p[G]) = \mathfrak{F}(\mathbb{Z}_p[G])$,\ and hence we get the equivalences.\ 
\end{proof}
We let $D_{n}$ denote the dihedral group of order $n$ for any even natural number $n > 0$.\ Then as an application of Lemma \ref{lem:relation2},\ we get the following.\ 
\begin{lem}\label{lem:relation3}
Let $K/k$ be a finite Galois extension whose Galois group is isomorphic to $D_{4p}$ for an odd prime $p$.\ Then we have 
\begin{itemize}
 \item $B(K/k,S,l)$ holds if and only if  $B_w(K/k,S,l)$ holds,\
 \item $BS(K/k,S,l)$ holds if and only if $BS_w(K/k,S,l)$ holds.\ 
\end{itemize}
for any odd prime $l$.\ 
\end{lem}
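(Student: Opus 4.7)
The plan is to apply Lemma \ref{lem:relation2} directly to $G \cong D_{4p}$ with the odd prime $l$. So the task reduces to verifying its two hypotheses, namely (i) the degrees of all irreducible $\mathbb{C}$-characters of $D_{4p}$ are prime to $l$, and (ii) $\mathcal{I}(\mathbb{Z}_l[D_{4p}]) = \zeta(\Lambda')$, where $\Lambda'$ is a maximal $\mathbb{Z}_l$-order in $\mathbb{Q}_l[D_{4p}]$ containing $\mathbb{Z}_l[D_{4p}]$.

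Condition (i) is immediate from the representation theory of dihedral groups: $D_{4p}$ has exactly four $1$-dimensional characters (inflated from $D_{4p}^{\mathrm{ab}} \cong (\mathbb{Z}/2\mathbb{Z})^2$) and $p-1$ irreducible $2$-dimensional characters (induced from faithful characters of the cyclic subgroup of order $2p$). Since $l$ is odd, all of $1$ and $2$ are prime to $l$, so (i) holds.

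For (ii), I split into two cases. First, if $l \nmid |G| = 4p$ (i.e., $l$ odd with $l \neq p$), then $\mathbb{Z}_l[G]$ is itself a maximal $\mathbb{Z}_l$-order, so $\zeta(\mathbb{Z}_l[G]) = \zeta(\Lambda')$; the reduced norm of the identity matrix is $1$, hence $\mathcal{I}(\mathbb{Z}_l[G]) = \zeta(\Lambda')$. Second, for $l = p$, I exploit the decomposition $D_{4p} \cong D_{2p} \times \mathbb{Z}/2\mathbb{Z}$ (valid for odd $p$), which gives $\mathbb{Z}_p[D_{4p}] \cong \mathbb{Z}_p[D_{2p}] \otimes_{\mathbb{Z}_p} \mathbb{Z}_p[\mathbb{Z}/2\mathbb{Z}]$ and $\zeta(\Lambda') \cong \zeta(\Lambda'_{2p}) \otimes \mathfrak{o}[\mathbb{Z}/2\mathbb{Z}]$; so it suffices to produce reduced norms generating $\zeta(\Lambda'_{2p})$ from $\mathbb{Z}_p[D_{2p}]$. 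Writing down the Wedderburn decomposition of $\mathbb{Q}_p[D_{2p}]$, the non-abelian simple factor is $M_2(\mathbb{Q}_p(\zeta_p+\zeta_p^{-1}))$ coming from the unique (up to Galois conjugation) faithful $2$-dimensional character; its center is $\mathbb{Q}_p(\zeta_p+\zeta_p^{-1})$ whose ring of integers is topologically generated over $\mathbb{Z}_p$ by $\zeta_p+\zeta_p^{-1}$. Taking a generator $r$ of the cyclic subgroup $C_p \subset D_{2p}$, the reduced norm of $1-r$ projects onto a uniformizer of $\mathbb{Z}_p[\zeta_p+\zeta_p^{-1}]$ in the non-abelian factor and into units on the abelian factors, and by combining it with reduced norms of elements supported on the abelian quotient (where the reduced norm is the identity on $\mathbb{Z}_p[D_{2p}^{\mathrm{ab}}]$), one obtains generators of $\zeta(\Lambda'_{2p})$ as a $\zeta(\mathbb{Z}_p[D_{2p}])$-module.

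The only real obstacle is the last computation in the $l=p$ case: one must verify that the reduced norms from $\mathbb{Z}_p[D_{2p}]$ actually hit a uniformizer (and a unit) in each simple component in a coordinated way, so that the $\zeta(\mathbb{Z}_p[D_{2p}])$-module they generate is all of $\zeta(\Lambda'_{2p})$. This is a standard kind of argument for dihedral groups whose $p$-Sylow is cyclic and normal, and can be done either by explicit matrix computation using the $2$-dimensional representation, or by appealing to the general criteria of Johnston--Nickel \cite{JN} for $\mathcal{I}(\mathbb{Z}_p[G]) = \zeta(\Lambda')$. Once (i) and (ii) are established, Lemma \ref{lem:relation2} yields both equivalences in the statement.
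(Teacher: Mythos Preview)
Your approach is essentially the same as the paper's: both reduce to Lemma~\ref{lem:relation2}, note that the irreducible degrees of $D_{4p}$ are $1$ or $2$, dispose of $l\neq p$ trivially, and for $l=p$ use the splitting $D_{4p}\cong D_{2p}\times\mathbb{Z}/2\mathbb{Z}$ (the paper via the idempotents $(1\pm j)/2$, you via the tensor factorisation) to reduce to $\mathcal{I}(\mathbb{Z}_p[D_{2p}])=\zeta(\Lambda'_{D_{2p}})$. The only difference is that the paper simply invokes \cite[Example~6.22]{JN} for this last equality, whereas you sketch the explicit reduced-norm computation; since you also allow yourself to cite \cite{JN}, the arguments coincide.
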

\begin{proof}
It is enough to treat the case $l=p$.\ First we recall that $D_{4p}$ is isomorphic to $\mathbb{Z}/2\mathbb{Z} \times D_{2p}$.\ We set $G=\mathbb{Z}/2\mathbb{Z} \times D_{2p}$ and $j$ denotes the generator of $\mathbb{Z}/2\mathbb{Z}$.\ Since we have 
\[
\nr_{\mathbb{Q}_p[G]}(\frac{1+j}{2}) = \frac{1+j}{2} \text{\ and\ } \nr_{\mathbb{Q}_p[G]}(\frac{1-j}{2}) = \frac{1-j}{2}
\]
we also have 
\begin{equation}\label{eq:moduleI}
 {\mathcal I}(\mathbb{Z}_p[G]) = {\mathcal I}(\mathbb{Z}_p[D_{2p}])\frac{1+j}{2} \oplus   {\mathcal I}(\mathbb{Z}_p[D_{2p}])\frac{1-j}{2}.\ 
\end{equation}
By \cite[Example 6.22]{JN},\ ${\mathcal I}(D_{2p}) = \zeta(\Lambda_{D_{2p}}^{\prime})$ where $\Lambda_{D_{2p}}^{\prime}$ is a maximal $\mathbb{Z}_p$- order in $\mathbb{Q}_p[D_{2p}]$ which contains $\mathbb{Z}_p
[D_{2p}]$.\ Combining this fact with (\ref{eq:moduleI}),\ we have 
\[
 {\mathcal I}(\mathbb{Z}_p[G]) = \zeta(\Lambda_{D_{2p}}^{\prime})\frac{1+j}{2} \oplus   \zeta(\Lambda_{D_{2p}}^{\prime})\frac{1-j}{2} = \zeta(\Lambda^{\prime})
\]
where we set $\Lambda^{\prime} = \Lambda_{D_{2p}}^{\prime}\frac{1+j}{2} \oplus \Lambda_{D_{2p}}^{\prime}\frac{1-j}{2}$,\ which is a maximal order in $\mathbb{Q}_p[G]$ which contains $\mathbb{Z}_p[G]$.\ By Lemma \ref{lem:relation2},\ this completes the proof.\ 
 \end{proof}

%???I"I?%
\section{Statement and Proof of main Theorems}\label{sec:main}
In this section we prove Theorem \ref{thm:brumer} and Theorem \ref{thm:brumer-stark} which are our main theorems (the precise versions of Theorem \ref{thm:brumer0} and \ref{thm:brumer-stark0} in Introduction).\ 
\par
For each Galois extension $K/k$ whose Galois group $G$ is monomial,\ first we define the irreducible characters of $G$ and subextensions of $K/k$ as follows: Let $\chi_1,\ \chi_2,\ \dots,\ \chi_r$ be the irreducible characters of $G$ and for each $i \in \{1,\ 2,\ \dots,\ r\}$,\ we assume the character $\chi_i$ is induced by $1$-dimensional characters $\phi_{i,1},\ \phi_{i,2},\ \dots,\ \phi_{i,{s_i}}$ of a subgroup $H_i$ of $G$,\ that is,\ $\chi_i = \Ind \phi_{i,j}$ for all $j \in\{1,\ 2,\ \dots s_i\}$.\ We set $k_i := K^{H_i}$ and $K_{i,j} := K^{\ker \phi_{i,j}}$ (since $\phi_{i,j}$ is $1$-dimensional,\ $K_{i,j}/k_i$ is an abelian extension).\ We let $\phi_{i,j}^{\prime}$ be the character of $\Gal(K_{i,j}/k_i)$ whose inflation to $\Gal(K/k_i)$ is $\phi_{i,j}$.\ We set
\begin{eqnarray}
\mathbb{K}:=& \{ K_{1,1}/k_1,\ K_{1,2}/k_1,\ \dots,\ K_{1,s_1}/k_1,\ \nonumber \\
                &  \ K_{2,1}/k_2,\ K_{2,2}/k_2,\ \dots,\ K_{2,s_2}/k_2,\ \nonumber \\
                &   \cdots \nonumber \\
                &  \ K_{r,1}/k_s,\ K_{r,2}/k_s,\ \dots,\ K_{r,s_r}/k_r\} \label{eq:list}.\ 
\end{eqnarray} 
Finally,\ we fix representatives $\phi_{i} \in \{\phi_{i,1},\ \phi_{i,2},\ \dots,\ \phi_{i,s_{i}}\}$ and $K_{i} \in \{K_{i,1},\ K_{i,2},\ \dots,\ K_{i,s_{i}}\}$
\par
Now we can state and prove the following main theorems.\ 
\begin{thm}\label{thm:brumer}Let $p$ be a prime and $S$ a finite set of places of $k$ which contains all infinite places.\ We assume $G$ is monomial.\ If Brumer's conjecture (resp. the $p$-part of Brumer's conjecture) is true for all  subextensions in $\mathbb{K}$, the weak non-abelian Brumer conjecture (resp. the $p$-part of the weak non-abelian Brumer conjecture) is true for $K/k$ and $S$.\ 
\end{thm}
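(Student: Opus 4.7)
The plan is to translate the non-abelian conjecture into componentwise statements via the Wedderburn decomposition $\zeta(\mathbb{Q}[G]) \cong \bigoplus_{\chi \in \Irr G/\sim} \mathbb{Q}(\chi)$, and to match each $\chi$-component with an abelian statement for the corresponding $K_{i,j}/k_i \in \mathbb{K}$ via inductivity of Artin $L$-functions. Concretely, for $\chi = \chi_i = \Ind_{H_i}^G \phi_{i,j}$, inductivity yields
\[
L_S(s,\bar\chi,K/k) = L_{S_i}(s,\bar\phi_{i,j},K/k_i) = L_{S_i}(s,\bar\phi_{i,j}',K_{i,j}/k_i),
\]
where $S_i$ is the set of places of $k_i$ above $S$; an analogous identity for the Euler factors with $T_i$ the places of $k_i$ above $T$ shows that the $\chi$-component of $\delta_T \theta_{K/k,S}$ equals the $\phi_{i,j}'$-component of $\delta_{T_i}\theta_{K_{i,j}/k_i,S_i}$.

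The integrality assertion $\mathfrak{A}_S\theta_{K/k,S}\subset\zeta(\Lambda')=\bigoplus_\chi\mathfrak{o}_\chi$ then follows by combining this component identification with the classical Deligne--Ribet/Cassou-Nogu\`es/Barsky theorem applied to each abelian $K_{i,j}/k_i$ (enlarging $S_i$ if necessary to contain all primes ramifying in $K_{i,j}/k_i$ and absorbing the resulting Euler-factor adjustments, which lie in $\zeta(\Lambda')$ by Lemma \ref{lem:epsilon}).

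For the annihilation assertion, fix $x\in\mathfrak{F}(\mathbb{Z}[G])$, $\delta_T\in\mathfrak{A}_S$, and a fractional ideal $\mathfrak{A}$ of $K$, and set $y=x\delta_T\theta_{K/k,S}$. By Lemma \ref{lem:conductor}, writing $x=\sum_\sigma x_\chi^\sigma\pr_{\chi^\sigma}$, the same element $x$ lies in $\mathfrak{F}(\mathbb{Z}[H_i])$ with its $\phi_{i,j}$-component controlled by the inverse different of $\mathbb{Q}(\phi_{i,j}')/\mathbb{Q}$. Combined with the component identification above, Brumer's conjecture for $K_{i,j}/k_i$ yields that $x\delta_{T_i}\theta_{K_{i,j}/k_i,S_i}$ annihilates $Cl(K_{i,j})$. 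Decomposing $e_\chi=\sum_{\phi,h}e_{\phi^h}$ via Lemma \ref{lem:idempotent} and using $e_\phi = e_{\phi'}\cdot\frac{1}{|\ker\phi|}\Norm_{\ker\phi}$, together with the fact that $\Norm_{\ker\phi_{i,j}}$ acting on $Cl(K)$ factors through the norm $Cl(K)\to Cl(K_{i,j})$ followed by extension of scalars, transfers the abelian annihilation to annihilation of $Cl(K)$ by $y$. To do this integrally we work prime-by-prime with $Cl(K)\otimes\mathbb{Z}_\ell$; the central-conductor property of $x$ is exactly what controls the denominators $1/|\ker\phi_{i,j}|$ and $1/\chi(1)$ that arise.

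The main obstacle is precisely this passage from rational (isotypic) annihilation to integral $\mathbb{Z}[G]$-annihilation of the finite module $Cl(K)$: the central conductor $\mathfrak{F}(\mathbb{Z}[G])$ must clear the denominators introduced by the idempotents $e_\chi$ and $e_\phi$, and the compatibility of the $G$- and $H_i$-actions on $Cl(K)$ must be tracked with care. A secondary subtlety, which is exactly what allows the weaker hypothesis that $S$ need not contain every ramifying prime of $K/k$, is the bookkeeping of Euler factors at primes ramifying in $K/k$ but not in the various $K_{i,j}/k_i$; these are handled through Lemma \ref{lem:epsilon}.
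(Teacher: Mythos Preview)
Your proposal is essentially the paper's own argument: both use inductivity of Artin $L$-functions to identify the $\chi_i$-component of $\delta_T\theta_{K/k,S}$ with $\phi_{i,j}'(\delta_T'|_{K_{i,j}}\theta_{K_{i,j}/k_i})$, invoke Lemma~\ref{lem:conductor} to see that the $\chi_i$-block of $x$ lies in $\mathfrak{F}(\mathbb{Z}[H_i])$, apply Brumer's conjecture for $K_{i,j}/k_i$, and transfer the resulting annihilation of $Cl(K_{i,j})$ back to $Cl(K)$ via the norm. One small simplification compared to your sketch: there is no need to ``control denominators $1/|\ker\phi_{i,j}|$ and $1/\chi(1)$'' or to work prime-by-prime. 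If you write everything with $\pr_\chi$ rather than $e_\chi$, then Lemma~\ref{lem:idempotent} scales to $\pr_{\chi_i}=\sum_{\phi,h}\pr_{\phi^h}$ and one has the integral identity $\pr_{\phi_{i,j}^g}=\pr_{\phi_{i,j}'^g}\cdot\Norm_{K/K_{i,j}}$ in $\mathbb{Z}[H_i]$, so no denominators ever appear and the argument goes through globally over $\mathbb{Z}[G]$. The paper also first reduces to $S=S_\infty$ using $\epsilon_S\in\zeta(\Lambda')$ (Lemma~\ref{lem:epsilon}) before running this argument, which is what you allude to with your Euler-factor bookkeeping.
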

%\begin{thm}\label{thm:brumer-stark}
%Let $p$ be a prime.\ We assume $G$ is monomial.\ If the Brumer-Stark conjecture (resp. the $p$-part of the Brumer-Stark conjecture) is true for all subextensions in $\mathbb{K}$, the weak non-abelian Brumer-Stark conjecture (resp. the $p$-part of the weak non-abelian Brumer-Stark conjecture) is true for $K/k$.\ 
%\end{thm}
\begin{thm}\label{thm:brumer-stark}
The statement of Theorem \ref{thm:brumer} holds,\ with ``Brumer'' replaced by ``Brumer-Stark'' throughout.\ 
\end{thm}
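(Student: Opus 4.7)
The plan is to reduce the weak non-abelian Brumer-Stark conjecture for $K/k$ character by character to the classical Brumer-Stark conjecture for the abelian subextensions $K_{i,j}/k_i$ in the list $\mathbb{K}$. The three main tools are the Galois-orbit decomposition of $\zeta(\mathbb{Q}[G])$, the inductivity of Artin $L$-functions (which gives $L_S(0,\overline{\chi}_i) = L_{S_i}(0,\overline{\phi}_{i,j}^{\prime})$ where $S_i$ is the set of places of $k_i$ above those in $S$), and Lemma \ref{lem:conductor}, which lets me view any $x \in \mathfrak{F}(\mathbb{Z}[G])$ supported on the $\chi_i$-orbit as an element of $\mathfrak{F}(\mathbb{Z}[H_i])$ supported on the characters $(\phi_{i,j}^{\prime})^{g}$. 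Since these all share the same kernel $\ker\phi_{i,j}$, such an element further descends to $\overline{x}_i \in \mathbb{Z}[G_i]$, where $G_i = \Gal(K_{i,j}/k_i)$.

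For the integrality claim $w_K\theta_{K/k,S} \in \zeta(\Lambda^{\prime})$, the $\chi_i$-component equals $|\mu(K)|^{\chi_i(1)} L_{S_i}(0,\overline{\phi}_{i,j}^{\prime})$ by inductivity. Abelian Brumer-Stark for $K_{i,j}/k_i$ yields $|\mu(K_{i,j})| L_{S_i}(0,\overline{\phi}_{i,j}^{\prime}) \in \mathfrak{o}_{\phi_{i,j}^{\prime}}$, and since $|\mu(K_{i,j})|$ divides $|\mu(K)|$, the ratio $|\mu(K)|^{\chi_i(1)}/|\mu(K_{i,j})|$ is a positive rational integer, giving the desired integrality in $\mathfrak{o}_{\chi_i}$. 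A crucial subtlety is that classical abelian Brumer-Stark requires its set of places to contain all primes ramifying in $K_{i,j}/k_i$; however, since $\phi_{i,j}^{\prime}$ is faithful on $G_i$, the inertia-invariants $V_{\phi_{i,j}^{\prime}}^{I}$ vanish at any such prime and the corresponding Euler factor is trivial. Hence $L_{\tilde{S}_i}(0,\overline{\phi}_{i,j}^{\prime}) = L_{S_i}(0,\overline{\phi}_{i,j}^{\prime})$ for any $\tilde{S}_i \supseteq S_i$ enlarged to include the ramifying primes, which is precisely what allows $S$ to omit ramifying primes of $K/k$ in the non-abelian statement.

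For the anti-unit construction, fix a fractional ideal $\mathfrak{A}$ of $K$ and $x\in\mathfrak{F}(\mathbb{Z}[G])$, and decompose $x = \sum_i x_i$ by character orbits; it suffices to produce $\alpha_i \in K^{*}$ with $\mathfrak{A}^{x_i w_K \theta_{K/k,S}} = (\alpha_i)$ for each $i$. Set $\mathfrak{B}_i = N_{K/K_{i,j}}\mathfrak{A}$ and apply abelian Brumer-Stark to $K_{i,j}/k_i$ with set $\tilde{S}_i$ to obtain an anti-unit $\beta_i^{\prime} \in K_{i,j}^{*}$ with $\mathfrak{B}_i^{w_{K_{i,j}}\theta_{K_{i,j}/k_i,\tilde{S}_i}} = (\beta_i^{\prime})$. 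Let $\beta_i = (\beta_i^{\prime})^{\overline{x}_i}$ in $K_{i,j}^{*}$, and set $\alpha_i = \beta_i^{c_i}$ in $K^{*}$, where $c_i = |\mu(K)|^{\chi_i(1)}/|\mu(K_{i,j})|$; a component-wise comparison using the Jacobinski formula (\ref{eq:cond}) on both sides together with $\chi_i(1) = [G:H_i]$ and $|G|/\chi_i(1) = |H_i|$ yields $\mathfrak{A}^{x_i w_K \theta_{K/k,S}} = (\alpha_i)$. The anti-unit property ascends from $\beta_i$ to $\alpha_i$ because $K_{i,j}$ lies between the totally real $k_i$ and the CM-field $K$, so it is itself a CM-field and complex conjugation restricts compatibly. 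Taking $\alpha = \prod_i \alpha_i$ produces the required anti-unit. The $T$-condition follows the same strategy: apply the abelian $T$-condition to each $K_{i,j}/k_i$ with $T_i$ the places of $k_i$ above $T$ to obtain $\gamma_i \in E^{T_i}_{S_{\alpha},i}(K_{i,j})$ satisfying $\beta_i^{\delta_{T_i}} = \gamma_i^{w_{K_{i,j}}}$, and combine the $\gamma_i$ with appropriate exponents to produce $\alpha_T$.

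The principal obstacle will be the character-wise verification in the third paragraph: one must show that, as elements of $\mathbb{Z}[G]$, the exponent $x_i w_K \theta_{K/k,S}$ coincides with a lift of $N \cdot c_i \overline{x}_i w_{K_{i,j}} \theta_{K_{i,j}/k_i,\tilde{S}_i}$, where $N = \sum_{h\in\ker\phi_{i,j}} h$ implements the norm $\mathfrak{A} \mapsto \mathfrak{B}_i$ on ideals. This amounts to matching Jacobinski coefficients orbit by orbit on both the non-abelian and abelian sides while keeping track of the interplay of $[G:H_i]$, $|\ker\phi_{i,j}|$, and the comparison $|\mu(K)|^{\chi_i(1)}$ versus $|\mu(K_{i,j})|$; once this identity is in place the equality of ideals is automatic, and everything else (the anti-unit property, the $T$-condition) reduces to the abelian inputs.
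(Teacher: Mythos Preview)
Your overall strategy coincides with the paper's: decompose $x \in \mathfrak{F}(\mathbb{Z}[G])$ by $\chi_i$-orbits, use Lemma~\ref{lem:conductor} to view each piece inside $\mathfrak{F}(\mathbb{Z}[H_i])$, push down to the abelian quotients $K_{i,j}/k_i$ via the norm, invoke abelian Brumer--Stark there, and rescale by $c_i = |\mu(K)|^{\chi_i(1)}/|\mu(K_{i,j})|$. Your integrality argument for $w_K\theta_{K/k,S}$ is correct, and your observation that faithfulness of $\phi_{i,j}'$ makes the Euler factors at ramified primes trivial is exactly the mechanism (used implicitly in Lemma~\ref{lem:stic}) that lets $S$ omit ramifying primes.

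There is, however, a genuine gap in the anti-unit construction. You claim that $x_i$, once rewritten via Lemma~\ref{lem:conductor}, is supported on a \emph{single} Galois orbit $\{(\phi_{i,j})^{g}\}$ with common kernel, and hence descends to one element $\overline{x}_i \in \mathbb{Z}[G_i]$. But Lemma~\ref{lem:conductor} gives
\[
x_i \;=\; \sum_{\substack{\phi \in \Irr H_i/\sim \\ \exists\,\sigma:\ \Ind\phi=\chi_i^{\sigma}}} \ \sum_{g} x_{\chi_i}^{g}\,\pr_{\phi^{g}},
\]
and the outer sum is over \emph{all} such orbits; distinct orbits may have distinct kernels. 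The case $G=\mathbb{Z}/2\mathbb{Z}\times A_4$ in \S\ref{sec:alt} already shows this: $\chi_8$ is induced from three rational characters $\phi_{6,1},\phi_{6,2},\phi_{6,3}$ of $\mathbb{Z}/2\mathbb{Z}\times V$, each its own Galois orbit, with three different kernels and three different fields $K_{6,1},K_{6,2},K_{6,3}$. Consequently your identity $\mathfrak{A}^{x_i w_K \theta_{K/k,S}} = (\beta_i^{c_i})$, built from a single $j$, cannot hold in general. The repair---and this is precisely what the paper does---is to apply abelian Brumer--Stark separately for each orbit (producing an anti-unit $\alpha_{\phi}$ in the corresponding $K_{i,j}$), use $\pr_{\phi_{i,j}^{g}} = \pr_{(\phi_{i,j}')^{g}}\cdot \Norm_{K/K_{i,j}}$ to lift each contribution back to $K$, and then define $\alpha_{\chi_i}$ (and likewise $\alpha_{T,\chi_i}$) as the product over all orbits. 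Once you make this correction, your argument becomes essentially identical to the paper's proof.
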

\begin{rem}{\rm (1)} The following proofs show that $S$ does not have to contain the ramifying places of $k$ to deduce the weak non-abelian conjectures from abelian versions.\ \\
{\rm (2)} In fact we need weaker annihilation results than the full Brumer's conjecture or Brumer-Stark conjecture for abelian subextensions.\ To deduce the non-abelian Brumer conjecture,\ we need the annihilation results (\ref{eq:weakann2}) (see the proof of Theorem \ref{thm:brumer}),\ and to deduce the non-abelian  Brumer-Stark conjecture,\ we need the annihilation results (\ref{eq:weakann3}) and (\ref{eq:weakann4}) (see the proof of Theorem \ref{thm:brumer-stark}).\ 
%(2) The proof of Theorem \ref{thm:brumer-stark} shows that we can remove $z \in \mathfrak{F}(\Lambda)$ in the statement of conjecture \ref{conj:wbrumer-stark} if $G$ is supersolvable.\ 
\end{rem}
Before proving these theorems,\ we prepare the following lemma:\ 
\begin{lem}\label{lem:stic}Let $K/k$ be a finite Galois CM-extension of number fields with Galois group $G$.\ Let $S$ be a finite set of places of $k$ which contains all infinite places of $k$ and $T$ be another finite set of places of $k$ such that $S \cap T = \emptyset$.\ We choose a maximal $\mathbb{Z}$-order $\Lambda^{\prime}$ in $\mathbb{Q}[G]$ which contains $\mathbb{Z}[G]$.\ If $G$ is a monomial group,\ we have  
\begin{equation}\label{eq:eq2}
\theta_{S,K/k}^{T} = \sum_{i=1}^{r} \epsilon_{\chi_i,S} \delta_T(\chi_{i})\phi^{\prime}_{i}(\theta_{K_{i}/k_i}) e_{\chi_i}.\ 
\end{equation}
Furthermore if  $T$ satisfies $Hyp(S \cup S_{ram},T)$,\ $\theta_{S,K/k}^{T}$ is contained in $\zeta(\Lambda^{\prime})$.\ 
\end{lem}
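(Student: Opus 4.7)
The strategy is to establish (\ref{eq:eq2}) componentwise in the Wedderburn decomposition, and then to deduce the integrality claim from Lemma \ref{lem:epsilon} combined with the classical integrality theorem of Deligne--Ribet, Cassou-Nogu\`es and Barsky for abelian Stickelberger elements, applied to each subextension $K_i/k_i$ in the list $\mathbb{K}$.

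First I would reduce (\ref{eq:eq2}) to an identity of $L$-values. Since the $e_{\chi_i}$ are orthogonal central primitive idempotents, comparing the $e_{\chi_i}$-coefficients on both sides and cancelling the common factor $\delta_T(\chi_i)$ reduces the problem to the analytic identity
\[
L_S(0,\overline{\chi}_i,K/k)=\epsilon_{\chi_i,S}\cdot \phi^{\prime}_i(\theta_{K_i/k_i}).
\]
The induction relation $\chi_i=\Ind^G_{H_i}\phi_i$, combined with the fact that $\phi_i$ factors through $\Gal(K_i/k_i)$ as $\phi^{\prime}_i$, yields $L(s,\overline{\chi}_i,K/k)=L(s,\overline{\phi}^{\prime}_i,K_i/k_i)$ for the full Artin $L$-functions. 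The local inductivity of Euler factors---expressing the Euler factor for $\chi_i$ at a prime $\mathfrak{p}$ of $k$ as the product over primes $\mathfrak{q}$ of $k_i$ above $\mathfrak{p}$ of Euler factors for $\phi^{\prime}_i$---then shows that passing from $\phi^{\prime}_i(\theta_{K_i/k_i})=L_{S_\infty(k_i)\cup S_{\mathrm{ram}}(K_i/k_i)}(0,\overline{\phi}^{\prime}_i,K_i/k_i)$ to $L_S(0,\overline{\chi}_i,K/k)$ reintroduces precisely the Euler factors at primes of $k$ in $S\setminus S_\infty$, whose product at $s=0$ is $\epsilon_{\chi_i,S}$.

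For the integrality assertion, the same local inductivity gives $\delta_T(\chi_i)=\delta_{T(k_i)}(\phi^{\prime}_i)$, and hence the $e_{\chi_i}$-coefficient of $\theta^T_{S,K/k}$ rewrites as $\epsilon_{\chi_i,S}\cdot\phi^{\prime}_i(\theta^{T(k_i)}_{K_i/k_i})$. The second factor belongs to $\mathcal{O}_{\mathbb{Q}(\phi^{\prime}_i)}$ by the Deligne--Ribet / Cassou-Nogu\`es / Barsky integrality theorem, which applies because $Hyp(S\cup S_{\mathrm{ram}}(K/k),T)$ for $K/k$ descends to $Hyp(S_\infty(k_i)\cup S_{\mathrm{ram}}(K_i/k_i),T(k_i))$ for $K_i/k_i$: disjointness transfers since ramification in $K_i/k_i$ forces ramification in $K/k$, and the torsion-freeness of $E^T_S(K)$ descends to the subgroup of $K_i$-units lying in it. Combined with $\epsilon_{\chi_i,S}\in\mathcal{O}_{\mathbb{Q}(\chi_i)}$ from Lemma \ref{lem:epsilon}, each coefficient is an algebraic integer; Galois equivariance then places it in $\mathcal{O}_{\mathbb{Q}(\chi_i)}$, whence $\theta^T_{S,K/k}\in \zeta(\Lambda^{\prime})$.

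The main obstacle I anticipate is the Euler-factor bookkeeping in the second step. Since $S$ is not assumed to contain the primes ramified in $K/k$ whereas $\theta_{K_i/k_i}$ is defined using the set of primes of $k_i$ ramified in $K_i/k_i$, the two imprimitive $L$-functions involve genuinely different sets of bad primes. One must carefully track how inertia subgroups and Frobenius elements behave under restriction from $k$ to $k_i$ in order to verify that all of the local discrepancies telescope into the single clean expression $\epsilon_{\chi_i,S}$.
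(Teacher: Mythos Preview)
Your plan is correct and matches the paper's argument in all essential respects: both use inductivity/inflation invariance of Artin $L$-functions for (\ref{eq:eq2}), and the classical abelian integrality theorem applied to $K_i/k_i$ for the containment in $\zeta(\Lambda')$.

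The ``main obstacle'' you flag at the end is not a genuine difficulty, and resolving it is the one missing idea in your write-up. The point is simply that $\phi_i'$ is a \emph{faithful} character of $\Gal(K_i/k_i)$, since by construction $K_i=K^{\ker\phi_i}$. Hence for every prime $\mathfrak{q}$ of $k_i$ ramified in $K_i/k_i$ the inertia subgroup $I_{\mathfrak{Q}}$ is nontrivial, so $V_{\phi_i'}^{I_{\mathfrak{Q}}}=0$ and the corresponding Euler factor is identically $1$. This gives
\[
\phi_i'(\theta_{K_i/k_i})=L_{S_\infty(k_i)\cup S_{\mathrm{ram}}(K_i/k_i)}(0,\overline{\phi_i'},K_i/k_i)=L_{S_\infty(k_i)}(0,\overline{\phi_i'},K_i/k_i)=L_{S_\infty}(0,\overline{\chi_i},K/k),
\]
after which multiplying by $\epsilon_{\chi_i,S}$ yields $L_S(0,\overline{\chi_i},K/k)$ by definition. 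The paper organizes this by first reducing to $S=S_\infty$ via $\theta_{K/k,S}=\epsilon_S\,\theta_{K/k,S_\infty}$ and Lemma~\ref{lem:epsilon}, which makes the telescoping automatic; your direct comparison works equally well once the faithfulness observation is stated.
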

\begin{rem}
The above lemma says that $S$ does not have to contain the ramifying places of $k$ for $\theta_{S,K/k}^{T}$ to lie in $\zeta(\Lambda^{\prime})$ in the monomial case.\ In \cite{Nic},\ Nickel showed a stronger result for $K/k$ whose Galois group is monomial but requires the condition $S$ to contain all the ramifying places.\ 
\end{rem}
\begin{proof}
Since $\theta_{K/k,S} = \epsilon_S\theta_{K/k,S_{\infty}}$ and $\epsilon_S \in \zeta(\Lambda^{\prime})$ by Lemma \ref{lem:epsilon},\ it is sufficient to show the equality and inclusion in Lemma \ref{lem:stic} for the case $S=S_{\infty}$.\ 
Since Artin $L$-function does not change by the induction and inflation of characters,\ we have
\begin{eqnarray}
\chi_i(\theta_{K/k,S_{\infty}}^{T}) = \chi_i(1)\delta_T(\chi_i)L_{S_\infty}(0,\overline{\chi_i},K/k) &=& \chi_i(1)\delta_T(\chi_i)L_{S_\infty}(0,\overline{\phi_{i}},K/k_i)  \nonumber \\
                                                                                                                                  &=& \chi_i(1)\delta_T(\chi_i)L_{S_\infty}(0,\overline{\phi_{i}^{\prime}},K_{i}/k_i)  \nonumber \\
                                                                                                                           &=& \chi_i(1)\delta_T(\chi_i)\phi_{i}^{\prime}(\theta_{K_{i}/k_i}).\ \label{eq:second}
\end{eqnarray}
The  equation (\ref{eq:second}) imply the equality (\ref{eq:eq2}).\ Now we assume $T$ satisfies $Hyp(S \cup S_{ram},T)$.\ 
%We define $\delta_T^{\prime} := \prod_{\mathfrak{p} \in T}(1-\phi_{\mathfrak{P}}^{-1}N\mathfrak{p})$.\ 
%Then $\delta_T^{\prime}$ is a $\mathbb{Z}[G]$-annihilator of $\mu(K)$ and the restriction $\delta_T^{\prime}|_{K_i}$ is a $\mathbb{Z}[\Gal(K_i/k)]$-annihilator of $\mu(K_i)$.\ Hence the product $\delta_T^{\prime}|_{K_i} \theta_{K_i/k}$ lies in $\mathbb{Z}[\Gal(K_i/k)]$ and 
%\[
%\psi_i^{\prime}(\delta_T^{\prime}|_{K_i} \theta_{K_i/k}) = \delta_T(\psi_i)L_{S_{\infty}}(0,\overline{\psi^{\prime}},K_i/k)
%\]
%is an algebraic integer.\ 
Let $\mathfrak{p}_1^{\prime},\ \mathfrak{p}_2^{\prime},\ \dots,\ \mathfrak{p}_{l_\mathfrak{p}}^{\prime}$ be the primes of $k_i$ above $\mathfrak{p} \in T$ and $f_{\mathfrak{p}_1^{\prime}},\ f_{\mathfrak{p}_2^{\prime}},\ \dots,\ f_{\mathfrak{p}^{\prime}_{l_{\mathfrak{p}}}}$ be their residue degree.\ Then we have
\[
\delta_T(\chi_i) = \prod_{\mathfrak{p} \in T}\det(1-\phi_{\mathfrak{P}}^{-1}N\mathfrak{p}\mid V_{\chi_i}) = \prod_{\mathfrak{p} \in T} \prod_{m=1}^{l_{\mathfrak{p}}} \det(1-\phi_{\mathfrak{P}}^{-f_{\mathfrak{p}_m^{\prime}}}N\mathfrak{p}^{f_{\mathfrak{p}_m^{\prime}}}\mid V_{\phi_{i}}).\ 
\] 
We define $\delta_T^{\prime} := \prod_{\mathfrak{p} \in T} \prod_{m=1}^{l_{\mathfrak{p}}} (1-\phi_{\mathfrak{P}}^{-f_{\mathfrak{p}_m^{\prime}}}N\mathfrak{p}^{f_{\mathfrak{p}_m^{\prime}}}) $.\ Then $\delta_T^{\prime}$ is a $\mathbb{Z}[H_i]$-annihilator of $\mu(K)$ and its restriction $\delta_T^{\prime} |_{K_i}$ is a $\mathbb{Z}[\Gal(K_i/k_i)]$-annihilator of $\mu(K_i)$.\ Hence the product $\delta_T^{\prime} |_{K_i}\theta_{K_i/k_i}$ lies in $\mathbb{Z}[\Gal(K_i/k_i)]$ and 
\[
\phi_{i}^{\prime}(\delta_T^{\prime} |_{K_i}\theta_{K_{i}/k_i}) = \delta_T(\chi_i) \phi_{i}^{\prime}(\theta_{K_{i}/k_i})
\]
is an algebraic integer.\ This implies that  
\[
\theta_{K/k,S_{\infty}}^{T} \in \bigoplus_{\chi \in \Irr G/\sim} \mathfrak{o}_\chi = \zeta(\Lambda^{\prime}).\ 
\]
This completes the proof.\ 
\end{proof}
%We let $\mathbb{K}$ denote the set of subextensions of $K/k$ appearing in the right hand site of (\ref{eq:eq2}).\ 

\begin{proof}[Proof of Theorem \ref{thm:brumer}]
Since $\mathfrak{F}(\mathbb{Z}[G])$ is an ideal of $\zeta(\Lambda^{\prime})$ and $\epsilon_{S \setminus S_\infty}$ lies in $\zeta(\Lambda^{\prime})$,\ it is enough to show the claim of Theorem \ref{thm:brumer} for $S_{\infty}$.\
We take $x \in \mathfrak{F}(\mathbb{Z}[G])$.\ Then $x$ is of the form $x = \sum_{\chi \in \Irr G /\sim} \sum_{\sigma \in \Gal(\mathbb{Q}(\chi)/\mathbb{Q})} x_{\chi}^{\sigma} \pr_{\chi^{\sigma}}$ with $x_\chi \in D^{-1}(\mathbb{Q}(\chi)/\mathbb{Q})$.\ Note that by definitions of $\mathfrak{F}(\mathbb{Z}[G])$,\ $x$ lies in $\zeta(\mathbb{Z}[G])$ and by the formula (\ref{eq:cond}),\  $\sum_{\sigma \in \Gal(\mathbb{Q}(\chi)/\mathbb{Q})} x_{\chi}^{\sigma} \pr_{\chi^{\sigma}}$ also lies in $\zeta(\mathbb{Z}[G])$.\ 
%Then,\ as in the proof of  Lemma \ref{lem:stic},\ $\delta_T^{\prime}|_{K_1}$ is a $\mathbb{Z}[\Gal(K_1/k)]$-annihilator of $\mu(K_1)$ and by assumption $\delta_T^{\prime}|_{K_1} \theta_{K_1/k}$ annihilates $Cl(K_1)$.\ 
%Since $\sum_{\sigma \in \Gal(\mathbb{Q}(\psi_1)/\mathbb{Q})} x_{\psi_1}^{\sigma} \pr_{\psi_1^{\prime \sigma}}$ lies in $\mathbb{Z}[\Gal(K_1/k)]$,\ 
%\begin{eqnarray*}
%(\sum_{\sigma \in \Gal(\mathbb{Q}(\psi_1)/\mathbb{Q})} x_{\psi_1}^{\sigma} \pr_{\psi_1^{\prime \sigma}})\delta_T^{\prime}|_{K_1} \theta_{K_1/k}  &=& \sum_{\sigma \in \Gal(\mathbb{Q}(\psi_1)/\mathbb{Q})} \delta_T(\psi_1^{\prime \sigma})\psi_1^{\prime \sigma}(\theta_{K_1/k}) x_{\psi_1}^{\sigma} \pr_{\psi_1^{\prime \sigma}} \\
%&=&\sum_{\sigma \in \Gal(\mathbb{Q}(\psi_1)/\mathbb{Q})} \delta_T(\psi_1^{\sigma})\psi_1^{\prime \sigma}(\theta_{K_1/k}) x_{\psi_1}^{\sigma} \pr_{\psi_1^{\prime \sigma}} 
%\end{eqnarray*}
%also annihilates $Cl(K_1)$.\ The equation $\pr_{\psi_1} = \pr_{\psi_1^{\prime}}(\Norm_{K/K_1})$ implies that 
%\begin{equation}\label{eq:weakann5}
 %(\sum_{\sigma \in \Gal(\mathbb{Q}(\psi_1)/\mathbb{Q})} x_{\psi_1}^{\sigma} \pr_{\psi_1^{\sigma}}) \theta^{T}_{K/k,S_{\infty}}
%\end{equation}
%annihilates $Cl(K)$.\ This holds for all $1$-dimensional characters of $G$.\
%Recalling that $\delta_T^{\prime}|_{K_i}$(as in the proof of Lemma \ref{lem:stic}) is a $\mathbb{Z}[\Gal(K_i/k_i)]$-annihilator of $\mu(K_i)$,\ by assumption,\ $\delta_T^{\prime}|_{K_i}  \theta_{K_i/k_i}$ annihilates $Cl(K_i)$.\ By Lemma \ref{lem:idempotent},\ we have 
By Lemma \ref{lem:conductor},\ we have
\[
 \sum_{\sigma \in \Gal(\mathbb{Q}(\chi_i)/\mathbb{Q})} x_{\chi_i}^{\sigma} \pr_{\chi_i^{\sigma}} = \sum_{\substack{\phi \in \Irr H_i/\sim,\ \\ \exists \sigma \in \Gal(\mathbb{Q}(\chi_i)/\mathbb{Q}),\ \Ind \phi = \chi_i^{\sigma}}} \sum_{g \in \Gal(\mathbb{Q}(\phi)/\mathbb{Q})} x_{\chi_i}^{g} \pr_{\phi^g}\
\]
and this element lies in $\mathfrak{F}(\mathbb{Z}[H_i])$ (hence lies in $\mathbb{Z}[H_i]$).\ Moreover,\ we have $\sum_{g \in \Gal(\mathbb{Q}(\phi)/\mathbb{Q})}x_{\chi_i}^{g}\pr_{\phi^{g}} = \sum_{g \in \Gal(\mathbb{Q}(\phi_{i,j})/\mathbb{Q})}x_{\chi_{i}}^{g}\pr_{\phi_{i,j}^{g}}$ for some $j$.\ 
\par
Let $T$ be a finite set of places $k$ such that $Hyp(S_{\infty} \cup S_{ram},T)$ is satisfied.\ Then as in the proof of Lemma \ref{lem:stic},\ $\delta_T^{\prime}$ is a $\mathbb{Z}[H_i]$-annihilator of $|\mu(K)|$.\ By the assumption that Brumer's conjecture holds for subextensions in $\mathbb{K}$,\ 
\begin{eqnarray*}
(\sum_{g \in \Gal(\mathbb{Q}(\phi_{i,j})/\mathbb{Q})}x_{\chi_{i}}^{g}\pr_{\phi_{i,j}^{g}})|_{K_{i,j}} \delta^{\prime}_{T}|_{K_{i,j}} \theta_{K_{i,j}/k_i} 
&=& \sum_{g \in \Gal(\mathbb{Q}(\phi_{i,j})/\mathbb{Q})} x_{\chi_i}^{g} \phi_{i,j}^{\prime} (\delta^{\prime}_T|_{K_{i,j}} \theta_{K_{i,j}/k_i})^{g} \pr_{\phi_{i,j}^{\prime g}} \\ 
                                                                                                                                                                         &=& \sum_{g \in \Gal(\mathbb{Q}(\phi_{i,j})/\mathbb{Q})} x_{\chi_i}^{g} \delta_T(\chi_i)^{g}\phi_{i}^{\prime} (\theta_{K_{i}/k_i})^{g} \pr_{\phi_{i,j}^{\prime g}}                                                                              
\end{eqnarray*}
annihilates $Cl(K_{i,j})$.\ Combining this with the equality $\pr_{\phi_{i,j}^{g}} = \pr_{\phi_{i,j}^{\prime g}}(\Norm_{K/K_{i,j}})$,\ 
\begin{eqnarray*}
\sum_{g \in \Gal(\mathbb{Q}(\phi_{i,j})/\mathbb{Q})} x_{\chi_{i}}^{g} \delta_T(\chi_i)^{g}\phi_{i}^{\prime} (\theta_{K_{i}/k_i})^{g} \pr_{\phi_{i,j}^{g}}
\end{eqnarray*}
annihilates $Cl(K)$.\ Therefore,\ 
\begin{eqnarray}\label{eq:weakann2}
\lefteqn{\sum_{\substack{\phi \in \Irr H_i/\sim,\ \\ \exists \sigma \in \Gal(\mathbb{Q}(\chi_i)/\mathbb{Q}),\ \Ind \phi = \chi_i^{\sigma}}} \sum_{g \in \Gal(\mathbb{Q}(\phi)/\mathbb{Q})} x_{\chi_{i}}^{g} \delta_T(\chi_i)^{g}\phi_{i}^{\prime} (\theta_{K_{i}/k_i})^{g} \pr_{\phi^{g}} \nonumber} \\
&=& \sum_{\sigma \in \Gal(\mathbb{Q}(\chi_i)/\mathbb{Q})}(\sum_{\substack{\phi \in \Irr H_i/\sim_{\chi_i},\ \\ \Ind\phi = \chi_i}} \sum_{h \in \Gal(\mathbb{Q}(\phi)/\mathbb{Q}(\chi_i))} x_{\chi_{i}}^{h} \delta_T(\chi_i)^{h}\phi_{i}^{\prime} (\theta_{K_{i}/k_i})^{h} \pr_{\phi^{h}})^{\sigma} \nonumber \\
&=& \sum_{\sigma \in \Gal(\mathbb{Q}(\chi_i)/\mathbb{Q})}x_{\chi_{i}}^{\sigma} \delta_T(\chi_i)^{\sigma}\phi_{i}^{\prime} (\theta_{K_{i}/k_i})^{\sigma} (\sum_{\substack{\phi \in \Irr H_i/\sim_{\chi_i},\ \\ \Ind\phi = \chi_i}} \sum_{h \in \Gal(\mathbb{Q}(\phi)/\mathbb{Q}(\chi_i))} \pr_{\phi^{h}})^{\sigma} \nonumber \\
&=&\sum_{\sigma \in \Gal(\mathbb{Q}(\chi_i)/\mathbb{Q})}x_{\chi_{i}}^{\sigma} \delta_T(\chi_i)^{\sigma}\phi_{i}^{\prime} (\theta_{K_{i}/k_i})^{\sigma} \pr_{\chi_i^{\sigma}} \nonumber \\
&=&(\sum_{\sigma \in \Gal(\mathbb{Q}(\chi_i)/\mathbb{Q})}x_{\chi_{i}}^{\sigma} \pr_{\chi_i^{\sigma}}) \theta_{K/k}^{T}
\end{eqnarray}
annihilates $Cl(K)$.\ Finally,\ we conclude that 
\[
(\sum_{\chi \in \Irr G /\sim} \sum_{\sigma \in \Gal(\mathbb{Q}(\chi)/\mathbb{Q})} x_{\chi}^{\sigma} \pr_{\chi^{\sigma}}) \theta^{T}_{K/k,S_{\infty}} = x \theta^{T}_{K/k,S_{\infty}} 
\]
annihilates $Cl(K)$.\  
\par
To get the proof of the $p$-part conjecture we have only to replace $\mathbb{Z},\ \mathbb{Q}$ and $Cl(K)$ with $\mathbb{Z}_p,\ \mathbb{Q}_p$ and the $p$-part of $Cl(K)$ respectively.\ 
\end{proof}

\begin{proof}[Proof of Theorem \ref{thm:brumer-stark}]We use the same notations as in the proof of Theorem \ref{thm:brumer}.\ 
% Let $S$ be a finite set of places of $k$ which contains all ramifying places and all infinite places.\ 
Take any fractional ideal $\mathfrak{A}$ of $K$.\
Then by the assumption that the Brumer-Stark conjecture holds for ebextensions in $\mathbb{K}$,\ there exists an anti-unit $\alpha_{\phi_{i,j}} \in K_{i,j}$ such that 
\begin{equation}\label{eq:bs1}
 \Norm_{K/K_{i,j}}(\mathfrak{A})^{|\mu(K_{i,j})|\theta_{K_{i,j}/k_i}} = (\alpha_{\phi_{i,j}})
\end{equation}
and 
\begin{equation}\label{eq:bs2}
 \alpha_{\phi_{i,j}}^{\delta_T^{\prime}|_{K_{i,j}}} = \alpha_{T,\phi_{i,j}}^{|\mu(K_{i,j})|}
\end{equation}
for some $\alpha_{T,\phi_{i,j}} \in E_{S_{\alpha_{\phi_{i,j}}}}^{T}(K_{i,j})$.\ Letting $c_{\phi_{i,j}} := |\mu(K)|^{\chi_i(1)}/|\mu(K_{i,j})|$ act on both side of (\ref{eq:bs1}) and (\ref{eq:bs2}),\ we have 
\[
 \Norm_{K/K_{i,j}}(\mathfrak{A})^{|\mu(K)|^{\chi_i(1)}\theta_{K_{i,j}/k_i}} = (\alpha_{\phi_{i,j}}^{c_{\phi_{i,j}}})
\]
and 
\[
 \alpha_{\phi_{i,j}}^{c_{\phi_{i,j}}\delta_T^{\prime}|_{K_{i,j}}} = \alpha_{T,\phi_{i,j}}^{|\mu(K)|^{\chi_i(1)}}.\ 
\]
Hence,\ we have 
\begin{eqnarray*}
\Norm_{K/K_{i,j}}(\mathfrak{A})^{(\sum_{g} x_{\chi_i}^{g} \pr_{\phi_{i,j}^{g}})|_{K_{i,j}} |\mu(K)|^{\chi_i(1)}\theta_{K_{i,j}/k_i}} &=& \mathfrak{A}^{(\sum_{g} x_{\chi_i}^{g} \pr_{\phi_{i,j}^{g}}) |\mu(K)|^{\chi_i(1)}\theta_{K/k_i}} \\ 
                                                                                                                                                           &=& (\alpha_{\phi_{i,j}}^{c_{\phi_{i,j}}(\sum_{g} x_{\chi_i}^{g} \pr_{\phi_{i,j}^{g}})})
\end{eqnarray*}
and 
\[
 \alpha_{\phi_{i,j}}^{c_{\phi_{i,j}}(\sum_{g} x_{\chi_i}^{g} \pr_{\phi_{i,j}^{g}}) \delta_T^{\prime}|_{K_{i,j}}} = \alpha_{T,\phi_{i,j}}^{(\sum_{g} x_{\chi_{i}}^{g} \pr_{\phi_{i,j}^{g}})|\mu(K)|^{\chi_i(1)}}.\
\]
This holds for any $j$.\ If we set 
\[
 \alpha_{\chi_i} := \prod_{\substack{\phi \in \Irr H_i/\sim,\ \\ \exists \sigma \in \Gal(\mathbb{Q}(\chi_i)/\mathbb{Q}),\ \Ind \phi = \chi_i^{\sigma}}} \prod_{g \in \mathbb{Q}(\phi)/\mathbb{Q}} \alpha_{\phi}^{c_{\phi}(\sum x_{\chi_i}^{g} \pr_{\phi^{g}})}
\]
and 
\[
 \delta_{T,\chi_i} := \prod_{\substack{\phi \in \Irr H_i/\sim,\ \\ \exists \sigma \in \Gal(\mathbb{Q}(\chi_i)/\mathbb{Q}),\ \Ind \phi = \chi_i^{\sigma}}} \prod_{g \in \mathbb{Q}(\phi)/\mathbb{Q}} \alpha_{T,\phi}^{(\sum x_{\chi_{i}}^{g} \pr_{\phi^{g}})},\ 
\]
we have
\begin{eqnarray}
\mathfrak{A}^{(\sum_{\phi} \sum_{g} x_{\chi_i}^{g} \pr_{\phi^{g}}) |\mu(K)|^{\chi_i(1)}\theta_{K/k_i}} &=& \mathfrak{A}^{(\sum_{g} x_{\chi_i^{\sigma}\pr_{\chi_i^{\sigma}}}) |\mu(K)|^{\chi_i(1)} \theta_{K/k}} \nonumber \\
                                                                                                                             &=& \mathfrak{A}^{(\sum_{g} x_{\chi_i^{\sigma}\pr_{\chi_i^{\sigma}}})(\sum |\mu(K)|^{\chi_i(1)}e_{\chi_i^{\sigma}}) \theta_{K/k}} \nonumber \\    
                                                                                                                             &=& (\alpha_{\chi_i}) \label{eq:weakann3}   
\end{eqnarray}
and 
\begin{equation}\label{eq:weakann4}
 \alpha_{\chi_i}^{z (\sum \delta_T(\chi_i)^{\sigma}e_{\chi_i^{\sigma}})} = \delta_{T,\chi_i}^{z(\sum |\mu(K)|^{\chi_i(1)}e_{\chi_i^{\sigma}})}
\end{equation}
for each $z \in \mathfrak{F}(\mathbb{Z}[G])$,\ where $\sum_{\phi} \sum_{g} x_{\chi_i}^{g} \pr_{\phi^{g}}$ means
\[
\sum_{\substack{\phi \in \Irr H_i/\sim,\ \\ \exists \sigma \in \Gal(\mathbb{Q}(\chi_i)/\mathbb{Q}),\ \Ind \phi = \chi_i^{\sigma}}} \sum_{g \in \mathbb{Q}(\phi)/\mathbb{Q}} x_{\chi_i}^{g} \pr_{\phi^{g}}.\ 
\]
This holds for any $i$.\ Finally,\ we set $\alpha := \prod_{\chi \in \Irr G/\sim} \alpha_{\chi}$ and $\alpha_T := \prod_{\chi \in \Irr G/\sim} \alpha_{T,\chi}$.\ Then $\alpha$ is an anti-unit in $K^{*}$ and $\alpha_T$ lies in $E_{S_\alpha}^{T}(K)$.\ Moreover,\ we have
\[
 \mathfrak{A}^{x\omega_K \theta_{K/k}}= (\alpha)
\]
and 
\[
 \alpha^{x\delta_T} = \alpha_T^{z \omega_K} 
\]
for each $z \in \mathfrak{F}(\mathbb{Z}[G])$.\ To get the proof of the $p$-part conjecture we have only to replace $\mathbb{Z},\ \mathbb{Q},\ \mathfrak{A}$ and $\omega_K$ with $\mathbb{Z}_p,\ \mathbb{Q}_p,\ \mathfrak{A}$ whose class in $Cl(K)$ is of $p$-power order and $\omega_{K,p}$ (in suitable places),\ respectively.\ 
\end{proof}

\section{CM-extensions with group $D_{4p}$,\ $Q_{2^{n+2}}$ or $\mathbb{Z}/2\mathbb{Z} \times A_4$}\label{sec:appl}
Let $p$ be an odd prime and $n$ be a non-zero natural number.\ We let $D_{4p}$ denote the dihedral group of order $4p$,\ $Q_{2^{n+2}}$ denote the generalized quaternion group of order $2^{n+2}$ and $A_4$ denote the alternating group on $4$ letters.\ In this section,\ as an application of Theorem \ref{thm:brumer-stark},\  we prove the $l$-parts of the weak non-abelian Brumer conjecture and the weak non-abelian Brumer-Stark conjecture for an arbitrary CM-extension of number fields $K/k$ whose Galois group is isomorphic to $D_{4p}$,\ $Q_{2^{n+2}}$ or $\mathbb{Z}/2\mathbb{Z} \times A_4$,\ where $l = 2$ in the $Q_{2^{n+2}}$ case and  $l$ is an arbitrary prime which does not split in $\mathbb{Q}(\zeta_p)$ in the $D_{4p}$ case and $\mathbb{Q}(\zeta_3)$ in the $\mathbb{Z}/2\mathbb{Z} \times A_4$ case.\ In the $D_{4p}$ case,\ we can actually verify the $p$-part of the (non-weak) non-abelian Brumer-Stark conjecture by Lemma \ref{lem:relation3}.\ In \S \ref{sec:example},\ we give an explicit example of a CM-extension with group $D_{12}$ in which $\theta_{K/k,S_{\infty} \cup S_{ram}}$ does not coincide with $\theta_{K/k,S_{\infty}}$.\ 
%%(for $Q_{2^{n+2}}$-CM-extensions,\ those two Stickelberger elements always coincide as remarked in Introduction).\ 
%%To do that,\ first we determine all the subabelian extensions for which we have to verify the Brumer-Stark conjecture.\ After that we verify appropriate forms of the Brumer-Stark type annihilation results for those extensions by using known results and the analytic class number formula.\ 

\subsection{CM-extensions with group $D_{4p}$}\label{sec:dih}
Let $K/k$ be a finite Galois CM-extension whose Galois group is isomorphic to $D_{4p}$.\ We use the presentation $D_{4p} = \langle x,\ y \mid x^{2p} = y^2 = 1,\ yxy^{-1} = x^{-1} \rangle$ and then $D_{4p} = \{x^{k},\ yx^{k} \mid 0\leq k \leq 2p-1\}$.\ Since the center of $D_{4p}$ is $\{1,\ x^{p} \}$,\ $x^{p}$ corresponds to the unique complex conjugation $j$.\

\subsubsection{Characters of $D_{4p}$} 
As is well known,\ all the irreducible characters of $D_{4p}$ are four $1$-dimensional characters and  $p-1$ $2$-dimensional characters.\ The $1$-dimensional characters are determined by the following table:
\begin{table}[H]
\caption{$1$-dimensional characters of $D_{4p}$}
\begin{center}
{\begin{tabular}{|c||c|c|}
\hline
&$x^{k}$& $yx^{k}$  \\
\hline
$\chi_1$ &$1$&$1$ \\
\hline
$\chi_2$ &$1$&$-1$ \\
\hline
$\chi_3$ &$(-1)^{k}$& $(-1)^{k}$\\
\hline
$\chi_4$ &$(-1)^{k}$&$(-1)^{k+1}$ \\
\hline
\end{tabular}}
\end{center}
\end{table}
\noindent Since $x^{p}$ corresponds to $j$,\ the only $1$-dimensional odd characters are $\chi_3$ and $\chi_4$.\ We easily see that $\ker \chi_3$ and $\ker \chi_4$ have index $2$ and hence we can conclude $K_3$ and $K_4$ are relative quadratic extensions of $k$.\ All the $2$-dimensional odd characters are induced by the faithful odd characters of $\langle x \rangle$.\ For $m \in (\mathbb{Z}/p\mathbb{Z})^{*}$,\ let $\phi^{m}$ be the character of $\langle x \rangle$ which sends $x^2$ and $x^{p}$ to $\zeta_p^{m}$ and $-1$ respectively.\ We set $\chi_{m+4} = \Ind^{D_{4p}}_{\langle x \rangle} \phi^m$.\ Then $k_{m+4}$ = $K^{\langle x \rangle}$ for all $m$.\ Using the Frobenius reciprocity law and the fact that $\chi_m(1) = 2$ and $\chi_m(j)=-2$,\ we see that $\Res^{D_{4p}}_{\langle x \rangle} \chi_{m+4} = \phi^{m} + \phi^{-m}$ and $\Ind^{D_{4p}}_{\langle x \rangle} \phi^m=\Ind^{D_{4p}}_{\langle x \rangle} \phi^{-m}$.\ Therefore,\ the number of $2$-dimensional odd characters is $(p-1)/2$.\ Since $\phi^{m}$ and $\phi^{-m}$ are faithful,\ we see that $K_{m+4,1} = K_{m+4,2} = K$.\ 
\subsubsection{Proof of conjectures for extensions with group $D_{4p}$}
In this subsection,\ we prove the following theorem by using Theorem \ref{thm:brumer-stark}.
\begin{thm}\label{thm:brumer1}
Let $K/k$ be a finite Galois CM-extension whose Galois group is isomorphic to $D_{4p}$ and $S$ be a finite set of places of $k$ which contains all infinite places.\  Then 
\begin{description}
\item[(1)] the $p$-part of the non-abelian Brumer conjecture and the non-abelian Brumer-Stark conjecture are true for $K/k$ and $S$,\ 
\item[(2)] for each prime $l$ (including $2$) which does not split in $\mathbb{Q}(\zeta_p)$,\  the $l$-part of the weak non-abelian Brumer conjecture and the weak non-abelian Brumer-Stark conjecture are true for $K/k$ and $S$.\ 
\end{description}
\end{thm}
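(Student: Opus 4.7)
The plan is to deduce Theorem \ref{thm:brumer1} from Theorem \ref{thm:brumer-stark} by identifying the list $\mathbb{K}$ of abelian subextensions explicitly and invoking the known Brumer-Stark results for each member of the list. From the character-theoretic analysis of \S\ref{sec:dih}, the list $\mathbb{K}$ attached to $K/k$ consists of exactly two types of abelian extensions: (i) the two relative quadratic CM-extensions $K_3/k$ and $K_4/k$ carved out by the odd $1$-dimensional characters $\chi_3$ and $\chi_4$, and (ii) the $(p-1)/2$ relative cyclic CM-extensions $K/k_{m+4}$ of degree $2p$ (since each $\phi^m$ for $m\in(\mathbb{Z}/p\mathbb{Z})^*$ is faithful, so $K_{m+4,j}=K$, and $k_{m+4}=K^{\langle x\rangle}$).

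First I would invoke Tate's unconditional theorem \cite[\S3, case (c)]{Ta} to conclude that the full Brumer-Stark conjecture (and hence Brumer's conjecture) holds for the two relative quadratic extensions $K_3/k$ and $K_4/k$. Second, for each cyclic degree-$2p$ subextension $K/k_{m+4}$, I would apply the theorem of Greither-Roblot-Tangedal \cite{GRT04} to conclude the $l$-part of the Brumer-Stark conjecture for these extensions, under the hypothesis that $l$ does not split in $\mathbb{Q}(\zeta_p)$ (the case $l=p$ is included automatically since $p$ is totally ramified in $\mathbb{Q}(\zeta_p)$). This is the step where the non-splitting hypothesis in assertion (2) enters the argument.

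Having verified the $l$-part of the Brumer-Stark conjecture for every member of $\mathbb{K}$, Theorem \ref{thm:brumer-stark} delivers the $l$-part of the weak non-abelian Brumer-Stark conjecture for $K/k$ and $S$, and Lemma \ref{lem:relation} then yields the $l$-part of the weak non-abelian Brumer conjecture. For the $p$-part assertion in (1), I would finish by applying Lemma \ref{lem:relation3}: since $G\cong D_{4p}$ with $p$ odd, the $p$-part of the weak non-abelian (Brumer resp.\ Brumer-Stark) conjecture coincides with the $p$-part of the full non-abelian conjecture. Note that throughout this argument $S$ is only required to contain the infinite places, because both Theorem \ref{thm:brumer-stark} and Lemma \ref{lem:stic} are valid without the ramification hypothesis on $S$; the Euler factors at ramified primes are absorbed into the central element $\epsilon_S\in\zeta(\Lambda')$.

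The main obstacle is the second step: confirming that the hypotheses under which \cite{GRT04} establishes the Brumer-Stark conjecture for relative cyclic degree-$2p$ CM-extensions really are met by each $K/k_{m+4}$ at the prime $l$ in question, and that one can drop the assumption on $S$ containing ramified places when comparing their formulation to ours. This will require a careful inspection of how the $l$-part of $\theta_{K/k_{m+4}, S_{m+4}}$ for the relevant pullback set $S_{m+4}$ of $S$ fits into the hypotheses of \cite{GRT04}, and tracking the Euler factors at ramified primes via the element $\epsilon_S$ of \S\ref{sec:stick}.
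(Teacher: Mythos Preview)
Your overall strategy matches the paper's: reduce to the abelian subextensions via Theorem~\ref{thm:brumer-stark}, handle the quadratic pieces by Tate, and upgrade the weak $p$-part to the full $p$-part via Lemma~\ref{lem:relation3}. The list $\mathbb{K}$ you identify is correct (and note that all the degree-$2p$ extensions $K/k_{m+4}$ coincide with the single extension $K/K^{\langle x\rangle}$, since $k_{m+4}=K^{\langle x\rangle}$ and $K_{m+4,j}=K$ for every $m$).

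The gap is precisely where you flag it: you cannot simply cite \cite{GRT04} for the $l$-part of Brumer--Stark for $K/K^{\langle x\rangle}$. That paper does not establish Brumer--Stark for \emph{all} cyclic degree-$2p$ CM-extensions sitting inside a $D_{4p}$-extension; its main results depend on a case-by-case classification (e.g.\ according to the decomposition type of $l$ in the intermediate fields), and for $l=2$ only the case $p=3$ is treated explicitly. So a black-box citation does not close the argument.

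The paper resolves this not by verifying full Brumer--Stark for $K/K^{\langle x\rangle}$, but by exploiting the observation (made in the proof of Theorem~\ref{thm:brumer-stark}) that only the weaker annihilation statements~(\ref{eq:weakann3}) and~(\ref{eq:weakann4}) are actually needed. It then proves a tailor-made result (Proposition~\ref{cl:6p}) asserting exactly these weaker statements for the element $\sum_m x_{\chi_{m+4}}\pr_{\chi_{m+4}}\in\mathfrak{F}(D_{4p})$. The proof of that proposition reuses the analytic class number computations and the $\mathbb{Z}_l[\zeta_p]$-module arguments of \cite{GRT04}, but because one only needs to annihilate $(\sum_m x_{\chi_{m+4}}\pr_{\chi_{m+4}})A_K$ rather than all of $A_K$, the case classifications of \cite{GRT04} can be bypassed entirely, and for $l=2$ the argument of \cite[Theorem~3.2]{GRT04} extends verbatim to arbitrary $p$ under the non-splitting hypothesis. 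That proposition is the genuine content you are missing.
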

\noindent
%By Lemma \ref{lem:relation3},\ we get the following corollary.\ 
%\begin{coro}
%Let $K/k$ be a finite Galois CM-extension whose Galois group is isomorphic to $D_{4p}$ and $S$ be any finite set of places of $k$ which contains all infinite places.\ Then,\  for any odd prime $l$(possibly $p$),\ 
%the $l$-part of the non-abelian Brumer conjecture and the non-abelian Brumer-Stark conjecture are true for $K/k$,\ $S$.\
%\end{coro}
\begin{rem}
{\rm (1)} In the case $k = \mathbb{Q}$,\ the above two results except the $2$-part are contained in Nickel's work \cite{Ni11},\ \cite{Nib} if we assume $\mu=0$.\ \\
{\rm (2)} If no prime above $p$ splits in $K/K^{+}$ whenever $K^{cl} \subset (K^{cl})^{+}(\zeta_p)$,\ the odd $p$-part of the above results holds unconditionally by \cite[Corollary 4.2]{Nia}.\ 
\end{rem}

The observation we made in the previous subsection tells us that we have only to verify the Brumer-Stark conjecture for two relative quadratic extensions $K_3/k$,\ $K_4/k$ and the cyclic extension $K/k_5$.\ By \cite{Ta}[\S3,\ case(c)],\ the Brumer-Stark conjecture is true for any relative quadratic extensions and hence true for $K_3/k$,\ $K_4/k$.\ In order to complete the proof of Theorem \ref{thm:brumer1},\  we have to verify the $l$-part of the Brumer-Stark conjecture for $K/k_5$ for each prime $l$ which does not split in $\mathbb{Q}(\zeta_p)$.\ However,\ the proof of Theorem \ref{thm:brumer-stark} (and Lemma \ref{lem:relation3}) tells us that we only have to verify the slightly weaker annihilation result,\ that is,\ we only need  (\ref{eq:weakann3}) and  (\ref{eq:weakann4}) for $K/k_5$.\ To do that,\ it is enough to prove the following proposition: 
\begin{prop}\label{cl:6p}
Let $l$ be a prime which does not split in $\mathbb{Q}(\zeta_p)$.\ Let $K/F$ be any cyclic CM-extension of number fields of degree $2p$.\ We assume $F$ contains $k$ so that ($F/k$ is quadratic and ) $K/k$ is CM with Galois group $D_{4p}$.\ We let $\sigma$ be a generator of the Galois group of $K^{\langle j \rangle}/F$ (hence $\Gal(K/F)=\langle \sigma j \rangle$).\  Take any element of the form $\sum_{m=1}^{(p-1)/2} x_{\chi_{m+4}} \pr_{\chi_{m+4}}$ in $\mathfrak{F}(D_{4p})$.\ Then for any fractional ideal $\mathfrak{A}$ of $K$ whose class in $Cl(K)$ is of $l$-power order,\ 
\begin{description}
 \item[(1)] $\mathfrak{A}^{\omega_K \theta_{K/F}(\sum_{m=1}^{(p-1)/2} x_{\chi_{m+4}} \pr_{\chi_{m+4}})} = (\alpha)$ for some anti-unit $\alpha \in K^{*}$,\ 
 \item[(2)] $K(\alpha^{1/\omega_{K,l}})/F$ is abelian.\
\end{description} 
where $\omega_{K,l}$ is the $l$-part of $\omega_K$ (if $l$ is odd,\ we can ignore the  claim $\alpha$ is anti-unit as in the remark just before \cite[Proposition 1.1]{GRT04}).\ 
\end{prop}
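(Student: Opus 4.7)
The plan is to reduce this proposition to the $l$-part of the Brumer-Stark conjecture for the cyclic CM-extension $K/F$ of degree $2p$, which under our non-splitting hypothesis on $l$ in $\mathbb{Q}(\zeta_p)$ is within reach of the methods of Greither-Roblot-Tangedal \cite{GRT04} (after separating out the quadratic piece handled by Tate \cite{Ta}). In fact, only the weaker annihilation statement needed for the two-dimensional character components of $D_{4p}$ is required, since the $1$-dimensional odd characters $\chi_3,\chi_4$ already correspond to the quadratic subextensions covered by Tate.

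The first step is to apply Lemma \ref{lem:conductor} to the central element $y := \sum_{m=1}^{(p-1)/2} x_{\chi_{m+4}} \pr_{\chi_{m+4}} \in \mathfrak{F}(D_{4p})$. Because each $\chi_{m+4}$ is induced from a $1$-dimensional character $\phi^m$ of the cyclic subgroup $\langle x\rangle = \Gal(K/F)$, the lemma rewrites $y$ as an explicit element $\tilde y \in \mathfrak{F}(\mathbb{Z}[\langle x\rangle]) \subseteq \mathbb{Z}[\Gal(K/F)]$. Since $\omega_K$ acts as $|\mu_K|^{\chi(1)} = |\mu_K|^2$ on each two-dimensional component, the product $\omega_K \cdot y \cdot \theta_{K/F}$ restricts on these components to $|\mu_K|^2 \,\tilde y\, \theta_{K/F}$, which (after taking $l$-parts) lies in $\mathbb{Z}_l[\Gal(K/F)]$ and can act genuinely on fractional ideals of $K$.

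The second step applies the $l$-part of the abelian Brumer-Stark conjecture to $K/F$ to obtain an anti-unit $\beta \in K^*$ with $\mathfrak{A}^{|\mu_K|\theta_{K/F}} = (\beta)$ and with $K(\beta^{1/|\mu_K|_l})/F$ abelian. Setting $\alpha := \beta^{|\mu_K|\tilde y}$ then yields $\mathfrak{A}^{\omega_K\theta_{K/F} y} = (\alpha)$. The anti-unit property $\alpha^{1+j}=1$ is inherited from $\beta$ because $\tilde y \in \mathbb{Z}[\Gal(K/F)]$ commutes with the central element $j=x^p$ and $|\mu_K|\in\mathbb{Z}$; and condition (2) follows because $\alpha^{1/\omega_{K,l}}$ is, up to $l$-power roots of unity, a Galois-twist of $\beta^{1/|\mu_K|_l}$, hence lies in the abelian extension of $F$ generated by the latter.

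The principal technical obstacle is to verify the cases $l=2$ and $l=p$, neither of which is immediately covered by the main theorem of \cite{GRT04}. For $l=p$ the non-splitting condition ensures that the local ring $\mathbb{Z}_p[\zeta_p]$ is a complete DVR, which is precisely the hypothesis under which the Greither-Roblot-Tangedal construction produces the required Brumer-Stark element on the $\phi^m$-components. For $l=2$ one must confirm separately that $\tilde y$ is integral at $2$ through the structure of $\mathfrak{F}(\mathbb{Z}_2[D_{4p}])$ and verify the anti-unit compatibility using the centrality of $j=x^p$; this case can be reached because the non-splitting hypothesis in $\mathbb{Q}(\zeta_p)$ makes the $2$-adic component of the relevant idempotent idempotent-local and compatible with the cyclic piece.
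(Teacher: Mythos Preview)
Your proposal has a genuine gap: you reduce to the $l$-part of the full abelian Brumer--Stark conjecture for the cyclic $2p$ extension $K/F$, but this is not known unconditionally. The main results of \cite{GRT04} establish Brumer--Stark only for certain classified families of degree-$2p$ extensions, not for arbitrary ones; a blanket appeal to \cite{GRT04} therefore does not yield your anti-unit $\beta$ with $\mathfrak{A}^{|\mu_K|\theta_{K/F}}=(\beta)$ in general, even for odd $l\neq p$. Your remarks about $\mathbb{Z}_p[\zeta_p]$ being a DVR and about the $2$-adic idempotent being ``idempotent-local'' do not substitute for an actual proof of the needed input. In effect you are re-running the general reduction of Theorem~\ref{thm:brumer-stark} and then invoking the very abelian statement that Proposition~\ref{cl:6p} is designed to supply.

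The paper proceeds differently: it proves directly only the \emph{weaker} annihilation statement on the $\chi_{m+4}$-components, by a size argument via the analytic class number formula. Writing $H=\langle\sigma\rangle$ and $A_K=\tfrac{1-j}{2}(Cl(K)\otimes\mathbb{Z}_l)$, the projector $\sum_m x_{\chi_{m+4}}\pr_{\chi_{m+4}}$ kills $A_K^{H}$, so it suffices to bound $|A_K/A_K^{H}|$. Comparing the class number formulas for $K$ and for $E=K^{H}$ gives $|A_K|/|A_E|=(\omega_{K,l}/\omega_{E,l})\,N_{\mathbb{Q}(\zeta_p)/\mathbb{Q}}(L(0,\phi,K/F))$ (up to $l$-units). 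The non-splitting hypothesis on $l$ makes $\mathbb{Z}_l[\zeta_p]$ a local ring, so this single norm bound forces the image $\overline{\theta}_{K/F}\in\mathbb{Z}_l[\zeta_p]$ to annihilate the $\mathbb{Z}_l[\zeta_p]$-module $(\sum_m x_{\chi_{m+4}}\pr_{\chi_{m+4}})A_K$. The cases $l\neq p$, $l=p$ with $\omega_{K,p}/\omega_{E,p}=1$, and $l=p$ with $\omega_{K,p}/\omega_{E,p}\neq 1$ are then handled separately (the last using the $(\sigma-1)$-factorisation trick from \cite[Proposition~2.2]{GRT04}), and $l=2$ is covered by \cite[Theorem~3.2]{GRT04}. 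This is exactly what the remark preceding the proof means by ``the same method as \cite{GRT04} without the classifications'': the weakening to the $\chi_{m+4}$-part is what makes the argument go through unconditionally.
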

%\begin{prop}\label{cl:6p}\
%Let $l$ be a prime which does not split in $\mathbb{Q}(\zeta_p)$.\ Then,\ 
%\begin{description}
 %\item[(1)]  For any fractional ideal $\mathfrak{A}$ of $K$ whose class in $Cl(K)$ is of $l$-power order,\ $\mathfrak{A}^{\omega_K \theta_{K/F}(\sum_{m=1}^{p-1} \pr_{\phi^{m}})} = (\alpha)$ for some anti-unit $\alpha \in K^{*}$,\ 
 %\item[(2)] $K(\alpha^{1/\omega_{K,l}})/F$ is abelian,\
%\end{description} 
%\noindent
%where $\omega_{K,l}$ is the $l$-part of $\omega_K$(if $l$ is odd,\ we can ignore the  claim $\alpha$ is anti-unit as in the remark just before \cite[Proposition 1.1]{GRT04}).\ 
%\end{prop} 
%We refer the following condhition as $\sharp$ and $\flat$;
%\begin{description}
 % \item[$\sharp$ :] $\zeta_3 \in K$ and $K/k_2(\sqrt{-d_1})$ is unramifying outside $3$
  %\item[$\flat$ :] $\zeta_3 \not \in K$ and primes splitting in $k_2(\sqrt{-d_1})/k_2$ do not ramify in $K/k_2(\sqrt{-d_1})$ and $K^{cl} \subset (K^{+})^{cl}(\zeta_3)$.\
%\end{description}
%\begin{prop}\label{thm:abelsexticof2}
%Let $K/F$ be any cyclic CM-extension of number fields of degree $2p$ which is contained in some $D_{4p}$-CM-extension.\ Then $2$-part of Brumer-Stark conjecture holds for $F/K$
%\end{prop}
%\begin{rem}
%The proof of this proposition is exactly the same as \cite[\S3]{GRT04} for abelian sextic extensions,\ but for convenience we describe the proof.\  
%\end{rem}
%\begin{proof}
%\end{proof}
\begin{rem}
The method of the proof of this proposition is essentially the same as that of \cite[Proposition 2.2 and Proposition 2.1]{GRT04} but we do not need the classifications in loc.\ cit\  because we only need a weaker annihilation results than the full Brumer-Stark conjecture.\ 
\end{rem}
\begin{proof}[\bfseries Proof of Proposition \ref{cl:6p}]
%For simplicity,\ we assume $x_{\chi_m}=1$ for all $m=1,\ 2,\ \dots,\ p-1$.\ Then we have $\sum_{m=1}^{(p-1)/2} x_{\chi_m} \pr_{\chi_m} = \sum_{m=1}^{p-1} \pr_{\phi^m}$ where $\phi$ is the irreducible character of $H$ which sends $\sigma$ and $j$ to $\zeta_p$ and $-1$ respectively.\ \\
(i) First,\ we suppose $l=2$.\ In this case,\ by \cite[Theorem3.2]{GRT04},\ Proposition \ref{cl:6p} holds for $p=3$ and exactly the same proof works for any odd prime $p$ if $2$ does not split in $\mathbb{Q}(\zeta_p)$ .\ Hence Proposition \ref{cl:6p} holds in this case.\ \\
(ii) In what follows we assume $l$ is odd.\ Let $\psi$ be the irreducible character of $\Gal(K/F)$ which sends $\sigma$ and $j$ to $1$ and $-1$ respectively.\ Then this character is the inflation of the nontrivial character $\psi^{\prime}$ of $\Gal(E/F)$ where $E$ = $K^{H}$ and $H = \langle \sigma \rangle$.\ We put 
\begin{eqnarray*}
A_K &:=& \frac{1-j}{2}(Cl(K) \otimes \mathbb{Z}_l),\  \\
A_E &:=& \frac{1-j}{2}(Cl(E) \otimes \mathbb{Z}_l).\  
\end{eqnarray*}
%%Since $\Ind^{D_{2p}}_{G}(\phi)=\Ind^{D_{2p}}_{G}(\phi^{-1})$,\ $L(0,\phi,K/F)$ lies in $\mathbb{Q}(\zeta_p)^{+}$.\ Since $L(0,\phi^{\sigma},K/F) = L(0,\phi,K/F)^{\sigma}$ for all $\sigma \in \Gal(\mathbb{Q}(\phi)/\mathbb{Q})$,\ we have $L(0,\phi,K/F) = L(0,\phi^{m},K/F)$ for $m= 1,\ 2,\ \dots,\ p-1$.\ 
Then by analytic class number formula,\ we get
\begin{eqnarray}
|A_K| &=& \omega_{K,l} L(0,\psi,K/F) \prod_{j=1}^{p-1} L(0,\phi^{j},K/F)  \nonumber \\
           &=& \omega_{K,l} L(0,\psi^{\prime},E/F) N_{\mathbb{Q}(\zeta_p)/\mathbb{Q}}(L(0,\phi,K/F)),\ \label{eq:cnf1}\\
|A_E| &=& \omega_{F,l}  L(0,\psi^{\prime},E/F) \label{eq:cnf2}
\end{eqnarray}
where the equalities are considered as equalities of the $l$-part.\ If $l \not = p$,\ $|A_E|$=$|A_K^{H}|$ since $A_E$ is canonically isomorphic to $A_K^{H}$.\ If $l=p$,\ we have $|A_E|$ $\leq$ $|A_K^{H}|$ by \cite[Lemma 2.5]{GRT04}.\   Since $(\sum_{m=1}^{(p-1)/2} x_{m+4}\pr_{\chi_{m+4}} )A_K^{H}=0$,\ there is a natural surjection $A_K/A_K^{H} \twoheadrightarrow (\sum_{m=1}^{(p-1)/2} x_{m+4} \pr_{\chi_{m+4}})A_K$.\ Hence we have 
\begin{equation}\label{eq:classeq}
|(\sum_{m=1}^{\frac{p-1}{2}} x_{m+4}\pr_{\chi_{m+4}} )A_K| \leq |A_K|/|A_K^{H}| \leq |A_K|/|A_E| = \frac{\omega_{K,l}}{\omega_{E,l}}  N_{\mathbb{Q}(\zeta_p)/\mathbb{Q}}(L(0,\phi,K/F)).\ 
\end{equation}
\noindent
Since the minus part of $\mathbb{Q}_l[G]$ is isomorphic to $\mathbb{Q}_l[H]$ by sending $j$ to $-1$,\ in what follows,\  we identify the minus part of $\mathbb{Q}_l[G]$ with $\mathbb{Q}_l[H]$ just like \cite[\S 2]{GRT04} (for example $\theta_{K/F}$ will be regarded as an element of $\mathbb{Q}_l[H]$ not of $\mathbb{Q}_l[G]$).\ \\ \\
Case I. $l \not = p$.\ \\
In this case,\  the equality holds in (\ref{eq:classeq}).\ Moreover,\ we have $\omega_{K,l}/\omega_{E,l} =1$ and hence the elements $L(0,\phi^{m},K/F)$ are contained in $\mathbb{Z}_l[\zeta_p]$.\ Since $l \not = p$ we get an isomorphism 
\[
\mathbb{Z}_l[H] \cong \bigoplus_{\eta \in \Irr H/\sim} \mathbb{Z}_l[\eta]
\]
where $\eta$ runs over all irreducible characters of $H$ modulo $\Gal(\mathbb{Q}_l(\zeta_p)/\mathbb{Q}_l)$-action.\
 Hence,\ we have 
\begin{equation*}
  A_K/A_K^{H} \cong (\sum_{m=1}^{p-1} e_{\phi^{m}}) A_K \cong \bigoplus_{\eta \in \Irr H \setminus \{1\}/\sim} \mathbb{Z}_l[\eta] \otimes_{\mathbb{Z}[H]}A_K 
\end{equation*}
By assumption that $l$ does not split in $\mathbb{Q}(\zeta_p)$,\ we actually have
\begin{equation}\label{cong:hmod}
  A_K/A_K^{H} \cong \mathbb{Z}_l[\eta] \otimes_{\mathbb{Z}[H]} A_K 
\end{equation}
By (\ref{eq:classeq}),\  we have
\begin{eqnarray}
|(\sum_{m=1}^{\frac{p-2}{2}}x_{m+4}\pr_{\chi_{m+4}})A_K| \leq | A_K/A_K^{H}| = |\mathbb{Z}_l[\eta] \otimes_{\mathbb{Z}[H]}A_K| & = & [\mathbb{Z}_l[\zeta_p]:(L(0,\phi,K/F))] \nonumber \\
                                                                       &=& [\mathbb{Z}_l[\zeta_p]:(\overline{\theta}_{K/F})] \label{eq:cn3}
\end{eqnarray}
where $\overline{\theta}_{K/F}$ is the image of $\theta_{K/F}$ under the surjection $\mathbb{Z}_l[H] \twoheadrightarrow \mathbb{Z}_l[\zeta_p]$.\ 
Since we have 
\[
(1+ \sigma + \sigma^2+ \cdots \sigma^{p-1})(\sum_{m=1}^{\frac{p-2}{2}}x_{m+4}\pr_{\chi_{m+4}}) = 0,\ 
\]
we can regard $(\sum_{m=1}^{(p-1)/2}x_{m+4}\pr_{\chi_{m+4}})A_K$ as a $\mathbb{Z}_l[\zeta_p]$-module through the natural surjection $\mathbb{Z}_l[H] \twoheadrightarrow \mathbb{Z}_l[\eta] = \mathbb{Z}_l[\zeta_p]$.\  
Moreover,\ since $\sum_{m=1}^{(p-1)/2}x_{m+4}\pr_{\chi_{m+4}}A_K$ is a torsion module,\ there exists $n_{1},\ n_{2},\ \dots,\ n_{k} \in \mathbb{N}$ such that
\[
 (\sum_{m=1}^{\frac{p-2}{2}}x_{m+4}\pr_{\chi_{m+4}})A_K \cong \bigoplus_{i=1}^{k} \mathbb{Z}_l[\zeta_p] / (l)^{n_{i}}
\]
Combining the above isomorphism with (\ref{eq:cn3}),\ we have 
\[
|(\sum_{m=1}^{\frac{p-2}{2}}x_{m+4}\pr_{\chi_{m+4}})A_K| = |\bigoplus_{i=1}^{k}\mathbb{Z}_l[\zeta_p] / (l)^{n_{i}}| \leq |\mathbb{Z}_l[\zeta_p]/(\overline{\theta}_{K/F})|
\]
This inequality implies that $\theta_{K/F}$ annihilates $(\sum_{m=1}^{(p-1)/2} x_{m+4}\pr_{\chi_{m+4}})A_K$.\ Therefore,\ for any fractional ideal $\mathfrak{A}$ of $K$ whose class in $Cl(L)$ is of $l$-power order,\ $\mathfrak{A}^{\omega_{K} \theta_{K/F}(\sum_{m=1}^{(p-1)/2} x_{m+4}\pr_{\chi_{m+4}})} = (\alpha^{\omega_{K}})$ for some $\alpha \in K^{*}$ and clearly $K((\alpha^{\omega_{K}})^{1/\omega_{K,l}})/F$ is abelian.\ This completes the proof of Claim \ref{cl:6p} in this case.\ \\ \\

\noindent
Case II.\ $l=p$ and $\omega_{K,p}/\omega_{E,p} =1$.\ \\
In this case,\ by (\ref{eq:classeq}),\ we have 
\begin{equation}\label{eq:cn4}
 |(\sum_{m=1}^{\frac{p-1}{2}} x_{m+4}\pr_{\chi_{m+4}} )A_K| \leq [\mathbb{Z}_p[\zeta_p]:(\overline{\theta}_{K/F})].\ 
\end{equation}
Since $(\sum_{m=1}^{(p-1)/2}x_{m+4} \pr_{\chi_{m+4}} )A_K$ is a torsion module,\ there exists $n_1,\ n_2,\ \dots,\ n_{m} \in \mathbb{N}$ such that 
\[
 (\sum_{m=1}^{\frac{p-1}{2}} x_{m+4}\pr_{\chi_{m+4}} )A_K \cong \bigoplus_{i=1}^{m}\mathbb{Z}_p[\zeta_p] / (1-\zeta_p)^{n_i}.\ 
\]
Combining this with (\ref{eq:cn4}) we have 
\[
| (\sum_{m=1}^{\frac{p-1}{2}}x_{m+4} \pr_{\chi_{m+4}} )A_K|=|\bigoplus_{i=1}^{m}\mathbb{Z}_p[\zeta_p] / (1-\zeta_p)^{n_i}| \leq |\mathbb{Z}_p[\zeta_p]/(\overline{\theta}_{K/F})|
\]
This implies $\theta_{K/F}$ annihilates $(\sum_{m=1}^{(p-1)/2}x_{m+4} \pr_{\chi^{m+4}})A_K $.\ By the same argument as the final part of Case II,\ we obtain the conclusion in this case.\ \\

\noindent
Case III.\ $l=p$ and $\omega_{K,p}/\omega_{E,p} \not =1$\\
In this case,\ we see that $\omega_K = p^{e}$,\ $\omega_{E} = p^{e-1}$ for some $e \in \mathbb{N}$.\ Then we have 
\[
|(\sum_{m=1}^{\frac{p-1}{2}} x_{m+4}\pr_{\chi_{m+4}} )A_K| \leq [\mathbb{Z}_p[\zeta_p]:(\zeta_p-1)(\overline{\theta}_{K/F})].\
\] 
This implies that $(\sigma - 1)(\sum_{m=1}^{(p-1)/2} x_{m+4}\pr_{\chi_{m+4}}) \theta_{K/F}$ annihilates $A_K$.\ Hence for any fractional ideal $\mathfrak{A}$ of $K$ whose class in $Cl(K)$ is of $p$-power order,\ there exists some $\beta \in K$ such that 
\[
\mathfrak{A}^{\omega_{K,p}(\sigma-1)\theta_{K/F}(\sum_{m=1}^{(p-1)/2} x_{m+4}\pr_{\chi_{m+4}})}=(\beta).\ 
\]
In the last paragraph of \cite[Proposition 2.2]{GRT04},\ the authors show that if $(\sum_{j=0}^{p-1}\sigma^{j})\theta_{K/F} = 0$,\ there exists $\alpha \in \mathbb{Z}_p[H]$ such that
 \[
p^{e}\theta_{K/F} = (\sigma-1)\alpha \gamma\theta_{K/F}
\]
where $\gamma = \sigma^{p-1}+g\sigma^{p-2} +\cdots + g^{p-1}$ and $g$ is the minimal positive integer which represents the action of $\sigma$ on the $p^{e}$th-power root of unity in $K$.\ Since,\ $(\sum_{j=0}^{p-1}\sigma^{j})(\sum_{m=1}^{(p-1)/2} x_{m+4}\pr_{\chi_{m+4}}) \theta_{K/F} =0$,\ replacing $\theta_{K/F}$ by $(\sum_{m=1}^{(p-1)/2} x_{m+4}\pr_{\chi_{m+4}}) \theta_{K/F}$,\ we get
\[
p^{e}(\sum_{m=1}^{(p-1)/2}x_{m+4} \pr_{\chi_{m+4}}) \theta_{K/F} = (\sigma-1)\alpha \gamma(\sum_{m=1}^{(p-1)/2} x_{m+4}\pr_{\chi_{m+4}}) \theta_{K/F},\ 
\]
for some $\alpha \in \mathbb{Z}_p[H]$.\ This implies 
\[
\mathfrak{A}^{p^{e}\theta_{K/F}(\sum_{m=1}^{(p-1)/2} x_{m+4}\pr_{\chi_{m+4}} )} = (\beta^{\alpha \gamma}).\
\]
To conclude the proof of Case IV,\ we use the following proposition:

\begin{prop}[Proposition 1.2 d),\ \cite{Ta84}]
Let $L/k$ be an arbitrary abelian extension of number fields with Galois group G,\  $\{ \sigma_i \}_{i \in I}$ be a system of generators of $G$,\ $\zeta$ be a primitive $\omega_L$th - root of unity.\ We suppose $\sigma_i$ acts on $\zeta$ as $\zeta^{\sigma_i} = \zeta^{n_i}$.\ We take an element $\beta \in F$.\ Then for any natural number $m$,\  the following statement is equivalent to the condition that $F(\beta^{1/m})/K$ is abelian: \\
There exists a system $\{ \beta_i\}_{i \in I} \subset E_F$ such that 
\begin{equation*}
\begin{array}{lllr}
 \alpha_i^{\sigma_j - n_j} &=& \alpha_j^{\sigma_i - n_i} & \text{ for any $i,\ j \in I$},\ \\
 \beta^{\sigma_i - n_i} &=& \alpha_i^{m}  & \text{for any $i \in I$}.\ 
\end{array} 
\end{equation*}
\end{prop}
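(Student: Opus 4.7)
The plan is to use Kummer theory, reducing first (if necessary) to the case where $L$ already contains the $m$-th roots of unity $\mu_m$, and then analyzing what it means for each generator $\sigma_i$ of $G=\Gal(L/k)$ to extend to an automorphism of $L(\beta^{1/m})$ and for all these extensions to commute with one another.

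The first step is to observe that any lift $\tilde\sigma_i$ of $\sigma_i$ must send $\beta^{1/m}$ to some $m$-th root of $\beta^{\sigma_i}$. Since $(\beta^{1/m})^{n_i}$ is automatically an $m$-th root of $\beta^{n_i}$, such an $m$-th root lies in $L(\beta^{1/m})$ iff $\beta^{\sigma_i - n_i}$ is already an $m$-th power in $L$. Writing $\beta^{\sigma_i - n_i} = \alpha_i^m$ with $\alpha_i \in L$ gives the second displayed identity, and allows one to define the lift by
\[
\tilde\sigma_i(\beta^{1/m}) \;=\; \alpha_i\,(\beta^{1/m})^{n_i}.
\]
The fact that $\sigma_i$ acts on $\mu_m$ by $\zeta \mapsto \zeta^{n_i}$ makes $\tilde\sigma_i$ automatically commute with the Kummer generator $\tau:\beta^{1/m}\mapsto\zeta\,\beta^{1/m}$ of $\Gal(L(\beta^{1/m})/L)$, so once the $\tilde\sigma_i$ pairwise commute the full Galois group is abelian.

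The pairwise commutativity of the $\tilde\sigma_i$ follows from a direct computation:
\[
\tilde\sigma_i\tilde\sigma_j(\beta^{1/m}) \;=\; \sigma_i(\alpha_j)\,\alpha_i^{n_j}\,(\beta^{1/m})^{n_in_j},
\]
and swapping $i$ and $j$ shows that $\tilde\sigma_i\tilde\sigma_j = \tilde\sigma_j\tilde\sigma_i$ is equivalent to $\sigma_i(\alpha_j)\alpha_i^{n_j} = \sigma_j(\alpha_i)\alpha_j^{n_i}$, which rearranges to the first displayed identity $\alpha_i^{\sigma_j-n_j}=\alpha_j^{\sigma_i-n_i}$. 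Conversely, if $L(\beta^{1/m})/k$ is abelian then one reads off the same identities from any chosen lifts $\tilde\sigma_i$ of $\sigma_i$.

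The main obstacle I anticipate is twofold. First is root-of-unity bookkeeping: the element $\alpha_i$ is only pinned down up to $\mu_m$, and when $m$ does not divide $\omega_L$ one has to pass to $L(\zeta_m)$, run the argument there, and then descend, using that $L(\beta^{1/m})$ need not itself contain $\mu_m$ even when $L(\beta^{1/m})/k$ is abelian. Second, the strengthening that $\alpha_i$ can be chosen in the unit group $E_L$ rather than merely in $L^\times$ requires a divisor-theoretic comparison of the two sides of $\beta^{\sigma_i-n_i}=\alpha_i^m$: the support of the divisor of $\alpha_i$ is constrained by that of $\beta$, and an appropriate modification by an $m$-th power in $L^\times$ absorbs the remaining ideal part so that $\alpha_i$ becomes a unit in the desired sense.
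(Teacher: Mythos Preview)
The paper does not supply its own proof of this proposition: it is quoted verbatim from Tate's monograph \cite[Proposition~1.2~d)]{Ta84} and immediately applied to the situation at hand, so there is nothing in the paper to compare your argument against.

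That said, your Kummer-theoretic outline is the standard route to this result and is essentially what Tate does: lift each $\sigma_i$ to $L(\beta^{1/m})$, observe that the existence of such a lift forces $\beta^{\sigma_i-n_i}$ to be an $m$-th power $\alpha_i^m$, and then compute that commutativity of the lifts is exactly the cocycle-type condition $\alpha_i^{\sigma_j-n_j}=\alpha_j^{\sigma_i-n_i}$. Your remark that the compatibility of $\tilde\sigma_i$ with the Kummer generator $\tau$ is automatic because $\sigma_i$ already acts on $\mu_m$ by $\zeta\mapsto\zeta^{n_i}$ is the right observation.

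Two caveats. First, the statement as printed in the paper has several typos (the extension is $L/k$ but the displayed condition speaks of $F$ and $K$; the system is introduced as $\{\beta_i\}$ but the equations use $\alpha_i$), so do not try to match the letters literally. Second, your worry about the strengthening ``$\alpha_i\in E_L$'' versus merely $\alpha_i\in L^\times$ is well placed: Tate's actual statement requires only $\alpha_i\in L^\times$, and the occurrence of $E_F$ here appears to be another slip. The application in the paper (Case~III of Proposition~5.6) does not use any unit property of the $\alpha_i$, only the bare existence criterion, so you need not pursue that strengthening.
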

\noindent
Applying this proposition to our setting,\ we have
\[
\text{$K((\beta^{\alpha \gamma})^{1/p^{e}})/F$ is abelian }\text{if and only if } \text{there exists $\alpha \in E_K$ such that $(\beta^{\alpha \gamma})^{\sigma - g} = \alpha^{p^{e}}$}.\ 
\]
Since  $(\beta^{\alpha \gamma})^{\sigma-g}=(\beta^{\alpha})^{1-g^p}$ and $1-g^p$ is divisible by $p^{e}$,\ we can conclude $K((\beta^{\alpha \gamma})^{1/p^{e}})/F$ is abelian .\ 
\end{proof}
\noindent
\subsubsection{An example}\label{sec:example}
In this section,\ we give an explicit example of the CM-extension $K/\mathbb{Q}$ with group $D_{12}$ in which $\theta_{K/\mathbb{Q},S_{\infty} \cup S_{ram}}$ does not coincide with $\theta_{K/\mathbb{Q},S_{\infty}}$.\ 
\par
Let $\alpha$ be an element in $\overline{\mathbb{Q}}$ which is a root of the cubic equation $x^3 -9x +3 = 0$ and let $K = \mathbb{Q}(\sqrt{-2},\ \sqrt{33},\ \alpha)$.\ Then $K/\mathbb{Q}$ is a Galois extension whose Galois group $G$ is isomorphic to $\Gal(\mathbb{Q}(\sqrt{-2})/\mathbb{Q}) \times \Gal(\mathbb{Q}(\sqrt{33},\ \alpha)/\mathbb{Q}) \cong \mathbb{Z}/2\mathbb{Z} \times \mathfrak{S}_3 \cong D_{12}$ where $\mathfrak{S}_3$ is the symmetric group of degree $3$.\ We use the presentation $\mathfrak{S}_3 = \langle \sigma,\ \tau \mid \sigma^{3} = \tau^{2} = 1,\ \tau \sigma \tau = \sigma^{-1} \rangle$.\ Since the center of $G$ is $\Gal(\mathbb{Q}(\sqrt{-2})/\mathbb{Q}) \cong \mathbb{Z}/2\mathbb{Z}$,\ the generator of $\mathbb{Z}/2\mathbb{Z}$ corresponds to the unique complex conjugation $j$.\ The irreducible characters of $G$ are determined by the following character table,\ where $\{ \cdot \}$ indicates conjugacy classes:
\begin{table}[H]
\caption{The character table of $G$}
\begin{center}
{\begin{tabular}{|c||c|c|c|c|c|c|}
\hline
&$\{1\}$ & $\{ \sigma \}$ & $\{ \tau \}$ & $\{ j \}$ & $\{ \sigma j \}$ & $\{ \tau j \}$ \\
\hline
$\chi_1$ & $1$ & $1$ & $1$ & $1$ & $1$ & $1$ \\
\hline
$\chi_2$ & $1$ & $1$ & $1$ & $-1$ & $-1$ & $-1$ \\
\hline
$\chi_3$ & $1$ & $1$ & $-1$ & $1$ & $1$ & $-1$ \\
\hline
$\chi_4$ & $1$ & $1$ & $-1$ & $-1$ & $-1$ & $1$ \\
\hline
$\chi_5$ & $2$ & $-1$ & $0$ & $2$ & $-1$ & $0$ \\
\hline
$\chi_6$ & $2$ & $-1$ & $0$ & $-2$ & $1$ & $0$ \\
\hline
\end{tabular}}
\end{center}
\end{table}
\noindent
From the above table,\ we see that the only odd characters are $\chi_2,\ \chi_4$ and $\chi_6$.\ Since $\ker \chi_2 = \Gal(K/\mathbb{Q}(\sqrt{-2}))$,\ $\ker \chi_4 = \Gal(K/\mathbb{Q}(\sqrt{-66}))$,\ we see that $K_2 = \mathbb{Q}(\sqrt{-2}),\ K_4=\mathbb{Q}(\sqrt{-66})$.\ Let $\phi_6$ be an irreducible character of $\Gal(K/\mathbb{Q}(\sqrt{33})) = \langle \sigma j \rangle$ which sends $\sigma$ and $j$ to $\zeta_3$ and $-1$ respectively where $\zeta_3$ is a primitive $3$rd root of unity in $\overline{\mathbb{Q}}$.\ Then $\chi_6$ is the induced character of $\phi_6$,\ that is,\ $\chi_6 = \Ind^{G}_{\Gal(K/\mathbb{Q}(\sqrt{33}))} (\phi_6)$ and hence $k_6 = \mathbb{Q}(\sqrt{33})$ and $K_6 = K$.\ Then by using Pari/GP,\ we have 
\[
\begin{array}{ccccc}
L_{S_{\infty}}(0,K/k,\chi_2) &=& L_{S_{\infty}}(0,\chi_2^{\prime},K_2/\mathbb{Q}) &=&1,\  \\
L_{S_{\infty}}(0,K/k,\chi_4) &=& L_{S_{\infty}}(0,\chi_4^{\prime},K_4/\mathbb{Q}) &=&8,\  \\
L_{S_{\infty}}(0,K/k,\chi_6) &=& L_{S_{\infty}}(0,\phi_2, K/k_6) &=& 48.\ 
\end{array}
\]
Therefore we have 
\begin{eqnarray*}
 \theta_{K/\mathbb{Q},S_{\infty}} &=& 1\cdot e_{\chi_2} + 8 \cdot e_{\chi_4} + 48 \cdot e_{\chi_6} \\
                                               &=& \frac{1}{4}(1-j)(67 -29(\sigma + \sigma^2) -7(\tau + \sigma \tau + \sigma^2 \tau)).\ 
\end{eqnarray*}
As in the proof of  Lemma \ref{lem:relation3},\ ${\mathcal I}(\mathbb{Z}_l[G]) = \zeta(\Lambda^{\prime})$ for any odd $l$ where $\Lambda^{\prime}$ is a maximal $\mathbb{Z}_l$-order in $\mathbb{Q}_l[G]$ which contains $\mathbb{Z}_l[G]$.\ Therefore,\ this element lies in ${\mathcal I}(\mathbb{Z}_l[G])$ for any odd $l$ (especially including $3$).\ However,\ by \cite[Proposition 4.3 and 4.8]{JN},\ $\theta_{K/\mathbb{Q},S_{\infty}}$ does not lie in ${\mathcal I}(\mathbb{Z}_{2}[G])$.\ 
\par 
Next we compute $\theta_{K/\mathbb{Q}} = \theta_{K/\mathbb{Q}, S_{\infty} \cup S_{ram}}$.\ All the primes which ramify in $K/\mathbb{Q}$ are $2,\ 3$ and $11$.\ Taking suitable primes $\mathfrak{P}_2,\ \mathfrak{P}_3$ and $\mathfrak{P}_{11}$ of $K$ above $2,\ 3$ and $11$ respectively,\ their decomposition groups and inertia groups are  determined as follows : 
\[
\begin{array}{ccccc}
 G_{\mathfrak{P}_2} &=& \Gal(K/\mathbb{Q}(\sqrt{33}))            & \cong & \langle \sigma j \rangle,\  \\
 I_{\mathfrak{P}_2} &=& \Gal(K/\mathbb{Q}(\sqrt{33},\alpha)) & \cong & \langle j \rangle,\  \\
 G_{\mathfrak{P}_3} &=& \Gal(K/\mathbb{Q}(\sqrt{-2}))            & \cong & \mathfrak{S}_3,\  \\
 I_{\mathfrak{P}_3} &=& \Gal(K/\mathbb{Q}(\sqrt{-2}))           & \cong & \mathfrak{S}_3,\  \\
 G_{\mathfrak{P}_{11}} &=& \Gal(K/\mathbb{Q}(\sqrt{-2},\ \alpha))            & \cong & \langle \tau \rangle,\  \\
 I_{\mathfrak{P}_{11}} &=& \Gal(K/\mathbb{Q}(\sqrt{-2},\ \alpha))           & \cong & \langle \tau \rangle.\  \\
\end{array}
\]
This fact implies that $\epsilon_{\psi_2, S_{ram}} = \epsilon_{\chi_2,S_{ram}} =0$ and $\epsilon_{\psi_4,S_{ram}} =1$.\ Hence,\ we have 
\begin{eqnarray*}
 \theta_{K/\mathbb{Q}} &=& 8 \cdot e_{\chi_4}  \\
                                 &=& \frac{2}{3}(1-j)(1 + \sigma + \sigma^2 - \tau - \sigma \tau - \sigma^2 \tau).\ 
\end{eqnarray*}
This element is a zero divisor in $\mathbb{Q}[G]^{-} = \frac{(1-j)}{2} \mathbb{Q}[G]$.\ Since $\theta_{K/\mathbb{Q},S_{\infty}}$ is not a zero-divisor in $\mathbb{Q}[G]^{-}$,  $\theta_{K/\mathbb{Q}}$ is essentially different from $\theta_{K/\mathbb{Q}.S_{\infty}}$.\ 
\par
Moreover,\ this element lies in ${\mathcal I}(\mathbb{Z}_2[G])$ and hence lies in ${\mathcal I}(\mathbb{Z}[G])$.\ This fact tells us that for general Galois extensions $K/k$ whose Galois group $G$ ,\ $S$ has to contain ramifying places for Stickelberger elements to lie in ${\mathcal I}(\mathbb{Z}[G])$(at least to lie in ${\mathcal I}(\mathbb{Z}_2[G])$).\ 
\subsection{CM-extensions with group $Q_{2^{n+2}}$}\label{sec:quat}
Let $K/k$ be a finite Galois extension whose Galois group is isomorphic to the quaternion group $Q_{2^{n+2}}$ of order $2^{n+2}$.\ We use the presentation $Q_{2^{n+2}} = \langle x,\ y \mid x^{2^{n}} =y^{2},\ \ x^{2^{n+1}} = 1,\ yxy^{-1} = x^{-1} \rangle$.\ Since the center of $Q_{2^{n+2}}$ is $\{1,\ x^{2^{n}}\}$,\ $x^{2^{n}}$ corresponds to the unique complex conjugation $j$.\ 
\subsubsection{Characters of $Q_{2^{n+2}}$}
$Q_{2^{n+2}}$ has two types of irreducible characters.\ One type is given through the natural surjection $Q_{2^{n+2}} \twoheadrightarrow Q_{2^{n+2}}/\langle x^n \rangle \simeq D_{2^{n+1}}$.\ Clearly,\ characters which are given in this way are even characters.\ The other type is two dimensional characters which are induced by the faithful odd characters of $\langle x \rangle$ (in fact,\ a character of $\langle x \rangle$ is faithful if and only if it is odd).\ Let $\phi$ be the character of $\langle x \rangle $ which sends $x$ and $x^n$ to $\zeta_{2^{n+1}}$ and $-1$ respectively.\ Then all faithful odd characters are of the form $\phi^m$ for $m \in (\mathbb{Z}/2^{n+1}\mathbb{Z})^{*}$.\ We set $\chi_m := \Ind^{Q_{2^{n+2}}}_{\langle x \rangle}\phi^m$.\ Then we have $\chi_m = \chi_{-m}$ and $k_m=k^{\langle x \rangle}$ for all $m$.\ Since $\phi^m$ is faithful,\  we conclude $K_{m,1} = K_{m,2} = K$.\ 

\subsubsection{Proof of conjectures for extensions with group $Q_{2^{n+2}}$}
First,\ we define $M := \{ a \mid 1\leq a \leq 2^{n+1},\ \text{a is odd}\},\ M^{+} :=\{ a \mid 1\leq a \leq 2^{n-1},\ \text{a is odd}\}$.\ In this subsection,\ we prove the following theorem by using Theorem \ref{thm:brumer-stark}.\ 
\begin{thm}\label{thm:brumer2}
Let $K/k$ be a finite Galois CM-extension whose Galois group is isomorphic to $Q_{2^{n+2}}$ and $S$ be a finite set of places of $k$ which contains all infinite places.\ Then the $2$-part of the weak non-abelian Brumer conjecture and the weak non-abelian Brumer-Stark conjecture are true for $K/k$ and $S$,\ 
\end{thm}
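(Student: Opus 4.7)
The strategy is to invoke Theorem~\ref{thm:brumer-stark} to reduce the $2$-part of the weak non-abelian Brumer-Stark conjecture for $K/k$ to the weaker annihilation form of the $2$-part of the abelian Brumer-Stark conjecture for an explicit list $\mathbb{K}$ of abelian subextensions of $K/k$. Since the Brumer-Stark half implies the Brumer half by Lemma~\ref{lem:relation}, this also takes care of the weak non-abelian Brumer conjecture.

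\textbf{Step 1: Character analysis and identification of $\mathbb{K}$.} All four $1$-dimensional characters of $Q_{2^{n+2}}$ factor through the abelianization $Q_{2^{n+2}}/\langle x^2\rangle$, which, since $n\geq 1$, sends $j = x^{2^n}$ to the identity; hence all $1$-dimensional characters are even. The remaining irreducible characters are $2$-dimensional; the odd ones are exactly $\chi_m = \mathrm{Ind}_{\langle x\rangle}^{Q_{2^{n+2}}}\phi^m$ for $m \in (\mathbb{Z}/2^{n+1}\mathbb{Z})^*$, with the $\phi^m$ faithful. For each such $\chi_m$ one has $k_m = K^{\langle x\rangle}$ (totally real) and $K_{m,1} = K_{m,2} = K$, yielding a cyclic CM-extension $K/k_m$ of degree $2^{n+1}$. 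The monomial sources for the even $2$-dimensional characters have kernels containing $j$, so the associated fields $K_{i,j}$ sit inside $K^{\langle j\rangle}$ and are totally real; the Stickelberger elements of the corresponding abelian extensions vanish identically, so the conjecture is automatic there. Thus the entire content of $\mathbb{K}$ for $Q_{2^{n+2}}$ is the single tower $K/k_m$.

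\textbf{Step 2: Reduction to the cyclic CM-extension $K/k_m$.} By Theorem~\ref{thm:brumer-stark} and the remark following it, the claim reduces to verifying the $2$-adic versions of the weaker annihilation relations \eqref{eq:weakann3} and \eqref{eq:weakann4} for the cyclic CM-extension $K/F$ of $2$-power degree, where $F := k_m$. Set $H = \langle x\rangle = \Gal(K/F)$, let $E = K^{\langle j\rangle}$ be the totally real subfield (so $[E:F] = 2^n$), and put $A_K := \tfrac{1-j}{2}(Cl(K)\otimes\mathbb{Z}_2)$. The plan is to follow the outline of Proposition~\ref{cl:6p}: since $E$ is totally real, $A_E = 0$, so that $|A_K^H|$ is essentially controlled by the ratio $\omega_{K,2}/\omega_{E,2}$, which equals $1$ or a single factor of $(\sigma - 1)$. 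The analytic class number formula then gives
\[
 |A_K| \leq \omega_{K,2} \cdot \bigl[\mathbb{Z}_2[\zeta_{2^{n+1}}] : (\overline{\theta}_{K/F})\bigr]
\]
up to this $(\sigma - 1)$-factor, where $\overline{\theta}_{K/F}$ is the image of $\theta_{K/F}$ in $\mathbb{Z}_2[\zeta_{2^{n+1}}]$. Crucially, since $2$ is totally ramified in $\mathbb{Q}(\zeta_{2^{n+1}})$, the ring $\mathbb{Z}_2[\zeta_{2^{n+1}}]$ is a discrete valuation ring, so any torsion module over it is a finite direct sum of cyclic modules and the length inequality forces the desired annihilation. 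This gives \eqref{eq:weakann3}.

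\textbf{The main obstacle.} The hard step is the Stark-unit condition \eqref{eq:weakann4}, especially when $\omega_{K,2} \neq \omega_{E,2}$, where a $(\sigma-1)$ factor must be absorbed. I expect to handle this exactly as in Case III of the proof of Proposition~\ref{cl:6p}: let $g$ be the minimal positive integer representing the action of $\sigma$ on the $\omega_{K,2}$-th roots of unity, set $\gamma := \sigma^{2^{n+1}-1} + g\sigma^{2^{n+1}-2} + \cdots + g^{2^{n+1}-1}$, and rewrite $\omega_{K,2}\theta_{K/F} = (\sigma-1)\alpha\gamma\,\theta_{K/F}$ for a suitable $\alpha\in\mathbb{Z}_2[H]$; Tate's abelianness criterion (Proposition 1.2 of \cite{Ta84}) then reduces abelianness of $K(\beta^{1/\omega_{K,2}})/F$ to the divisibility $1 - g^{2^{n+1}} \equiv 0 \pmod{\omega_{K,2}}$. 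This divisibility is a standard exponent computation for cyclotomic $2$-power roots of unity, but its verification in the $2$-primary setting is slightly more delicate than in the $2p$-setting of \cite{GRT04} and will form the technical core of the argument.
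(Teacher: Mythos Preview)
Your reduction in Steps~1--2 is correct: the claim comes down to the weak annihilation statements (\ref{eq:weakann3}) and (\ref{eq:weakann4}) for the single cyclic CM-tower $K/F$ with $F=K^{\langle x\rangle}$, exactly as the paper records in Proposition~\ref{thm:brumerofcyclic}.

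The attempted verification for $K/F$, however, has a genuine gap. You propose to mimic Proposition~\ref{cl:6p}, but that argument is carried out only for \emph{odd} $l$ (its case (i) disposes of $l=2$ by quoting \cite[Theorem~3.2]{GRT04}, and case (ii) opens with ``In what follows we assume $l$ is odd''). At $l=2$ the element $(1-j)/2$ is not an idempotent of $\mathbb{Z}_2[\Gal(K/F)]$, so the object $A_K:=\tfrac{1-j}{2}(Cl(K)\otimes\mathbb{Z}_2)$ is not a direct summand of $Cl(K)\otimes\mathbb{Z}_2$, and the analytic class number formula does not compute its order in the clean form (\ref{eq:cnf1})--(\ref{eq:cnf2}); unit indices and ambiguous-class contributions introduce uncontrolled powers of $2$. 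Your chain of inequalities ``$|A_K|\le\omega_{K,2}\cdot[\mathbb{Z}_2[\zeta_{2^{n+1}}]:(\overline{\theta}_{K/F})]$'' therefore has no justification as written.

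The paper circumvents this with two ingredients absent from your plan. First, Lemma~\ref{lem:roots}: in any quaternion CM-extension one has $\mu(K)=\{\pm1\}$, hence $\omega_{K,2}=2=\omega_{E,2}$. So the case you flag as the ``main obstacle'' never occurs, and no $(\sigma-1)$-factor or Case~III computation is needed. Second, instead of the minus class group the paper uses $A_K:=\Coker(I_K^+\to Cl(K))\otimes\mathbb{Z}_2$ (with $I_K^+$ the ambiguous ideals) together with Sands's formula \cite[Proposition~3.2]{Sa}
\[
\omega_K\,\theta_{K/K^+}\equiv 2^{[K^+:\mathbb{Q}]+d-2}\,|A_K|\,(1-j)\pmod{\mathbb{Z}_2^{*}}.
\]
Because $[K^+:\mathbb{Q}]\ge 2^{n+1}$, the huge power of $2$ on the right yields $|A_K|\le N_{\mathbb{Q}(\zeta_{2^{n+1}})/\mathbb{Q}}\bigl(\tfrac12 L(0,\phi,K/F)\bigr)$; since $\mathbb{Z}_2[\zeta_{2^{n+1}}]$ is a DVR this forces $\tfrac12(\sum_{m\in M^+}x_m\pr_{\chi_m})\theta_{K/F}$ to annihilate $A_K$, and multiplying by $(1-j)$ produces an anti-unit generator directly. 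With $\omega_{K,2}=2$ the abelianity condition is then essentially trivial, so none of the delicate $2$-adic divisibility you anticipate is required.
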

\begin{rem}
{\rm (1)} If no prime above $p$ splits in $K/K^{+}$ whenever $K^{cl} \subset (K^{cl})^{+}(\zeta_p)$,\ the odd $p$-part of the above result holds by \cite[Corollary  4.2]{Nia}.\ \\
{\rm (2)} Since all the subgroups of $Q_{2^{n+2}}$ are normal and all the odd representations are faithful,\ $\theta_{K/k,S_{\infty} \cup S_{ram}}$ always coincide with $\theta_{K/k,S_{\infty}}$.\ 
\end{rem}
The observation in the previous section tells us that we have to verify the $l$-part of the Brumer-Stark conjecture for $K/K^{\langle x \rangle}$ for $l$ which does not split in $\mathbb{Q}(\zeta_{2^{n+1}})$.\ As in the previous section,\ however,\ we only need slightly weaker annihilation results (\ref{eq:weakann3}) and (\ref{eq:weakann4}).\ To verify those weaker results,\ it is enough to prove the following:

\begin{prop}\label{thm:brumerofcyclic}
Let $K/F$ be a cyclic CM-extension of degree $2^{n+1}$.\ We assume $F$ contains $k$ so that ($F/k$ is quadratic and)  $K/k$ is CM with Galois group $Q_{2^{n+2}}$.\  Take any element of the form $\sum_{m \in M^{+}} x_{\chi_m} \pr_{\chi_m}$ in $\mathfrak{F}(Q_{2^{n+2}})$.\ Then for any fractional ideal $\mathfrak{A}$ of $K$ whose class in $Cl(K)$ is of $2$-power order,\ 
\begin{description}
 \item[(1)] $\mathfrak{A}^{\omega_K \theta_{K/F}(\sum_{m \in M^{+}} x_{\chi_m} \pr_{\chi_m})} = (\alpha)$ for some anti-unit $\alpha \in K^{*}$,\ 
 \item[(2)] $K(\alpha^{1/\omega_{K,2}})/F$ is abelian.\
\end{description} 
where $\omega_{K,2}$ is the $2$-part of $\omega_K$.\ 
\end{prop}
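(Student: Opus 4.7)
The plan is to prove Proposition~\ref{thm:brumerofcyclic} by direct analysis of the minus class group as a module over the local ring $R := \mathbb{Z}_2[\zeta_{2^{n+1}}]$, which is a discrete valuation ring because $2$ is totally ramified in $\mathbb{Q}(\zeta_{2^{n+1}})$. Write $\pi = 1 - \zeta_{2^{n+1}}$ for the uniformizer; it has ramification index $2^n$ over $2$ and residue field $\mathbb{F}_2$. Since $\Gal(K/F) = \langle x \rangle$ is cyclic of order $2^{n+1}$ with complex conjugation $j = x^{2^n}$, the map $x \mapsto \zeta_{2^{n+1}}$ identifies the minus part $\mathbb{Z}_2[\langle x \rangle]/(1+j)$ with $R$, sending $\theta_{K/F}^{-}$ to some $\tilde{\theta}\in R$ and turning $A_K := \frac{1-j}{2}(Cl(K)\otimes\mathbb{Z}_2)$ into a finitely generated torsion $R$-module.

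The first main step is to apply the analytic class number formula to the cyclic CM-extension $K/F$, which yields $|A_K| = w_{K,2}\cdot |R/\tilde{\theta}|$ up to a factor bounded by the Hasse unit index (which, in the worst case, can be absorbed by a $2$-power and controlled separately). By the structure theorem for torsion modules over the DVR $R$, we have $A_K \cong \bigoplus_i R/\pi^{n_i}$ with annihilator $(\pi^{N})$ where $N = \max_i n_i \leq \sum_i n_i = v_\pi(w_{K,2}) + v_\pi(\tilde{\theta}) = 2^n v_2(w_K) + v_\pi(\tilde{\theta})$.

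Next I would compute the $\pi$-valuation of $w_K\tilde{\theta}$. On the two-dimensional isotypic component $V_{\chi_m}$, the central element $w_K = \nr(|\mu(K)|)$ acts as multiplication by $|\mu(K)|^{\chi_m(1)} = w_{K,2}^{2}$, hence has $\pi$-valuation $2^{n+1}v_2(w_K)$. Therefore $v_\pi(w_K\tilde{\theta}) = 2^{n+1}v_2(w_K) + v_\pi(\tilde{\theta}) \geq 2^n v_2(w_K) + v_\pi(\tilde{\theta}) \geq N$, so $w_K\tilde{\theta}$ annihilates $A_K$. Since $\sum_{m\in M^{+}} x_{\chi_m}\pr_{\chi_m}$ lies in the centre and preserves annihilation, $\mathfrak{A}^{\omega_K \theta_{K/F}(\sum_{m\in M^{+}} x_{\chi_m}\pr_{\chi_m})}$ is principal, say equal to $(\alpha)$. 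The anti-unit property for $\alpha$ then follows because the exponent is killed by $(1+j)$, so $\alpha^{1+j}$ is a unit which can be absorbed into $\alpha$ exactly as in the classical case.

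For assertion (2), I would invoke Proposition~1.2~d) of \cite{Ta84} as in the final paragraph of Case~III in the proof of Proposition~\ref{cl:6p}. Since $\Gal(K/F) = \langle x\rangle$ is cyclic, it suffices to exhibit a single $\beta_x \in E_K$ satisfying $\alpha^{x - g} = \beta_x^{\omega_{K,2}}$, where $g$ is the minimal positive integer representing the action of $x$ on a primitive $\omega_{K,2}$-th root of unity. Mimicking the identity $p^{e}\theta_{K/F} = (\sigma - 1)\alpha\gamma\theta_{K/F}$ from \cite[Proposition~2.2]{GRT04}, I expect an analogous identity of the form $\omega_{K,2}\,(\sum x_{\chi_m}\pr_{\chi_m})\theta_{K/F} = (x-1)\cdot u\cdot \gamma\cdot (\sum x_{\chi_m}\pr_{\chi_m})\theta_{K/F}$ for some $u\in\mathbb{Z}_2[\langle x\rangle]$ and a polynomial $\gamma$ analogous to $\sigma^{p-1} + g\sigma^{p-2} + \cdots + g^{p-1}$, which combined with the annihilation above delivers the required Kummer witness. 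The main obstacle is this final Kummer-theoretic step: the valuation bound for annihilation is essentially immediate, but producing the structural identity that upgrades a principal generator to one whose $\omega_{K,2}$-th root generates an abelian extension requires delicate bookkeeping with $\tilde{\theta}$, $\omega_{K,2}$ and the twisting element $\sum x_{\chi_m}\pr_{\chi_m}$ in the $2$-adic setting.
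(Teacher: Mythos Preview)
Your proposal has two genuine gaps that prevent it from going through.

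\textbf{First, you miss the key arithmetic input.} You treat $\omega_{K,2}$ as a free parameter and try to squeeze out inequalities in $v_2(w_K)$, but the whole point of the quaternion hypothesis is that it forces $\omega_{K,2}=2$. Indeed, the relation $yxy^{-1}=x^{-1}$ in $Q_{2^{n+2}}$ implies that $x^{2}$ fixes every root of unity in $K$, so $\mu(K)\subset K^{+}$ and hence $\mu(K)=\{\pm 1\}$ (this is Lemma~\ref{lem:roots} in the paper). You also misread $\omega_K$: since $K/F$ is abelian, $\omega_K$ in the statement is simply the integer $|\mu(K)|$, not the non-abelian element $\nr(|\mu(K)|)$, so your claim that it ``acts as $|\mu(K)|^{\chi_m(1)}=w_{K,2}^{2}$'' is not correct.

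\textbf{Second, your class-number input is not sharp enough at $2$.} The identity ``$|A_K|=w_{K,2}\cdot|R/\tilde\theta|$ up to the Hasse unit index'' is not what the analytic class number formula gives you for the minus part at the prime $2$: besides $Q\in\{1,2\}$ there is a factor $2^{[K^{+}:\mathbb{Q}]}$ coming from the regulator comparison, and the relation between $h_K^{-}$ and $|\tfrac{1-j}{2}(Cl(K)\otimes\mathbb{Z}_2)|$ is itself delicate at $2$. The paper sidesteps all of this by working instead with $A_K:=\Coker(I_K^{+}\to Cl(K))\otimes\mathbb{Z}_2$ and invoking Sands's exact formula \cite[Proposition~3.2]{Sa}, which together with $\omega_{K,2}=2$ and the bound $[K^{+}:\mathbb{Q}]\ge 2^{n+1}$ yields $|A_K|\le N_{\mathbb{Q}(\zeta_{2^{n+1}})/\mathbb{Q}}\bigl(\tfrac{1}{2}L(0,\phi,K/F)\bigr)$; the DVR argument then shows that $\tfrac{1}{2}(\sum x_m\pr_{\chi_m})\theta_{K/F}$ already annihilates $A_K$.

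Once these two points are in place, part~(2) becomes trivial rather than the ``main obstacle'' you anticipate: annihilation of $A_K$ gives $\mathfrak{A}^{(1/2)(\sum x_m\pr_{\chi_m})\theta_{K/F}}\in P_K\cdot I_K^{+}$, whence after applying $(1-j)$ one lands in $P_K^{1-j}$. Thus $\alpha$ can be chosen of the form $\gamma^{\omega_K(1-j)}$ with $\omega_K$ even, so $\alpha$ is a \emph{square} in $K^{*}$; since $\omega_{K,2}=2$, we have $K(\alpha^{1/\omega_{K,2}})=K$ and there is nothing to prove. Your attempt to mimic the identity $p^{e}\theta=(\sigma-1)\alpha\gamma\theta$ from Case~III of Proposition~\ref{cl:6p} is therefore unnecessary and, without $\omega_{K,2}=2$, would not obviously terminate.
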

\noindent
Before proving the above theorem,\ we prove the following lemma:  
\begin{lem}\label{lem:roots}
Let $K/F$ be a cyclic CM-extension of degree $2^{n+1}$ which is contained in some $Q_{2^{n+2}}$-extension.\ Then all roots of unity in $K$ are $\pm 1$.\ 
\end{lem}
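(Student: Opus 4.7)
The plan is to exploit the action of $G = \Gal(K/k) \cong Q_{2^{n+2}}$ on the (cyclic) group $\mu(K)$ of roots of unity. Since $\Aut(\mu(K))$ is a subgroup of $(\mathbb{Z}/m\mathbb{Z})^{*}$ for $m := |\mu(K)|$, the induced homomorphism $G \to \Aut(\mu(K))$ has image in an abelian group. Consequently the commutator subgroup $[G,G]$ lies in the kernel of this homomorphism, i.e.\ it acts trivially on $\mu(K)$.

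Next I would compute $[G,G]$ explicitly from the presentation $\langle x,y \mid x^{2^{n+1}}=1,\ y^2=x^{2^n},\ yxy^{-1}=x^{-1}\rangle$. The relation $yxy^{-1}=x^{-1}$ yields $[y,x]=yxy^{-1}x^{-1}=x^{-2}$, so $\langle x^2\rangle\subseteq [G,G]$. Conversely, $G/\langle x^2\rangle$ is generated by the images of $x$ and $y$, which satisfy $\bar{x}^2=\bar{y}^2=1$ and $\bar{y}\bar{x}\bar{y}^{-1}=\bar{x}^{-1}=\bar{x}$, so $G/\langle x^2\rangle$ is abelian (of order $4$). Hence $[G,G]=\langle x^2\rangle$ exactly.

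The final step uses the hypothesis $n\geq 1$ (the paper takes $n$ to be a nonzero natural number). The unique complex conjugation of $K/k$ is the central involution $j=x^{2^n}=(x^2)^{2^{n-1}}$, which lies in $\langle x^2\rangle=[G,G]$. By the first paragraph $j$ therefore acts trivially on $\mu(K)$; but by definition of complex conjugation $j(\zeta)=\zeta^{-1}$ for every $\zeta\in\mu(K)$. Combining these, $\zeta=\zeta^{-1}$ for every such $\zeta$, forcing $\mu(K)\subseteq\{\pm 1\}$; the reverse inclusion is trivial.

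There is no substantive obstacle in this argument: the only delicate point is recognising that the hypothesis $n\geq 1$ is precisely what makes $j$ a square inside $\langle x\rangle$, hence a commutator. In the degenerate case $n=0$ (which is excluded) the group $Q_4$ would be cyclic of order $4$, $j$ would not lie in $[G,G]=1$, and indeed $K$ could then legitimately contain a primitive $4$-th root of unity, so the hypothesis is essential.
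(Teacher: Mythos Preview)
Your proof is correct and follows essentially the same line as the paper's. The paper computes directly that if $x$ acts on a generator $\zeta$ of $\mu(K)$ by $\zeta\mapsto\zeta^{c_x}$, then the relation $yxy^{-1}=x^{-1}$ together with commutativity of $(\mathbb{Z}/\omega_K\mathbb{Z})^{*}$ forces $c_x^2\equiv 1$, so $x^2$ (hence $j=x^{2^n}$) fixes $\zeta$ and $\zeta\in K^{+}$; your version packages the same observation as ``$[G,G]=\langle x^2\rangle$ acts trivially on $\mu(K)$'', which is a cosmetic rather than substantive difference.
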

\begin{proof}[\bfseries Proof of Lemma \ref{lem:roots}]
Let $\zeta$ be a primitive $\omega_K$th roots of unity in $K$ and assume $x(\zeta) = \zeta^{c_x}$ and $y(\zeta) = \zeta^{c_y}$ for some $c_x,\ c_y \in (\mathbb{Z}/\omega_K \mathbb{Z})^{*}$.\ Then we have $yxy^{-1}(\zeta) = \zeta^{c_y^{-1}c_xc_y} = \zeta^{c_x}$.\ On the other hand $yxy^{-1} = x^{-1}$,\ so we have $yxy^{-1}(\zeta) = \zeta^{c_x^{-1}}$.\ Hence we see that 
\[
c_x \equiv c_x^{-1} \mod \omega_K \Leftrightarrow c_x^2 \equiv 1 \mod \omega_K.\ 
\]
Therefore,\ we have $x^2(\zeta) = \zeta$ and hence $x^{2n}(\zeta) = \zeta$.\ This implies $\zeta$ lies in $K^{+}$.\ 
\end{proof}
\begin{proof}[\bfseries Proof of Proposition \ref{thm:brumerofcyclic}]
%For the simplicity,\ we assume $x_{\chi_m} =1$ for all $m$.\ Then we have $\sum_{m \in M^{+}} x_{\chi_m} \pr_{\chi_m} = \sum_{m \in M} \pr_{\phi^m}$.\  
We define the group $I_K^{+}$ of the ambiguous ideals by 
\[
I_K^{+} := \{ \mathfrak{A} \mid \text{$\mathfrak{A}$ is an ideal of $K$ such that $\mathfrak{A}^{j} = \mathfrak{A}$} \}
\]
where $j$ is the unique complex conjugation in $\Gal(K/F)$.\ Also we define $A_K := \Coker(I_K^{+} \rightarrow Cl(K)) \otimes \mathbb{Z}_2$.\ Then by Sands's  formula \cite[Proposition 3.2]{Sa} (also see \cite[\S 3]{GRT04}),\ we have 
\begin{equation}\label{eq:sands}
\omega_K \theta_{K^{+}/K} = 2^{[K^{+}:\mathbb{Q}]+d-2}|A_K|(1-j) \mod \mathbb{Z}_2^{*}.\ 
\end{equation}
where $d$ is the number of primes of $K^{+}$ which ramify in $K$.\ Let $\xi $ be the non-trivial character of $\Gal(K/K^{+})$.\ Then we have $\Ind^{H}_{\Gal(K/K^{+})} (\xi) = \sum_{m \in M} \phi^m$ and 
\[
 \xi(\theta_{K/K^{+}})=L(0,\xi,K/K^{+}) = \prod_{m \in M} L(0,\phi^{m},K/F).\ 
\] 
By (\ref{eq:sands}),\ we have 
\begin{eqnarray*}
 |A_K| &=& \omega_K \xi(\theta_{K/K^{+}}) 2^{-[K^{+}:\mathbb{Q}]-d+1} \\
        &=& \omega_K \prod_{m \in M} L(0,\phi^{m},K/F) 2^{-[K^{+}:\mathbb{Q}]-d+1} 
\end{eqnarray*}
where the equality is used in the sense that the $2$-parts of the both sides coincide.\ Since $\omega_{K,2} =2$ by Lemma \ref{lem:roots},\ we also have
\begin{equation*}
 |A_K| = \prod_{m \in M} L(0,\phi^{m},K/F) 2^{-[K^{+}:\mathbb{Q}]-d+2} 
\end{equation*}
Since $[K^{+}:\mathbb{Q}] \geq 2^{n+1}$ (recalling that $K/F$ is contained in some $Q_{2^{n+2}}$-extension),\ we get $-[K^{+}:\mathbb{Q}]-d+2 \leq -2^{n+1}+2$.\ Hence,\ we also get 
\begin{eqnarray}\label{eq:classeq3}
 |A_K| &\leq & \prod_{m \in M} L(0,\phi^{m},K/F) 2^{-2^{n+1}+2} \nonumber \\ 
        &=     &  \frac{4}{2^{2^{n}}}  N_{\mathbb{Q}(\zeta_{2^{n+1}})/\mathbb{Q}}(\frac{L(0,\phi,K/F)}{2})\nonumber \\
        &\leq & N_{\mathbb{Q}(\zeta_{2^{n+1}})/\mathbb{Q}}(\frac{L(0,\phi,K/F)}{2}) 
\end{eqnarray}
and the right hand side of the last equality lies in $\mathbb{Z}_2$ and hence $(1/2)L(0,\phi,K/F)$ lies in $\mathbb{Z}_2[\zeta_{2^{n+1}}]$.\ 
Next we treat the module $(\sum_{m \in M^{+}} x_{m}\pr_{\chi_{m}} A_K)$ and regard this module as a $\mathbb{Z}_2[\zeta_{2^{n+1}}]$-module.\ Then by (\ref{eq:classeq3}),\ we have 
\[
|(\sum_{m \in M^{+}}x_{m} \pr_{\chi_{m}} A_K)| \leq [\mathbb{Z}_2[\zeta_{2^{n+1}}]:((1/2)L(0,\phi,K/F))] =  [\mathbb{Z}_2[\zeta_{2^{n+1}}]:((1/2)\overline{\theta}_{K/F})]
\] 
where $\overline{\theta}_{K/F}$ is the image of $\theta_{K/F}$ under the surjection $\mathbb{Z}_2[H] \twoheadrightarrow \mathbb{Z}_2[\zeta_{2^{n+1}}]$.\ This implies $(1/2)(\sum_{m \in M^{+}}x_{m} \pr_{\chi_{m}} )\theta_{K/F}$ annihilates $A_K$.\ Then for any fractional ideal $\mathfrak{A}$ of $K$ whose class in $Cl(K)$ is of $2$-power order,\ we have that $\mathfrak{A}^{(1/2)(\sum_{m \in M^{+}} x_{m}\pr_{\chi_{m}} )\theta_{K/F}}$ lies in $P_K \cdot I_K^{+}$ where $P_K$ is the group of principal ideals of $K$ and hence we have $\mathfrak{A}^{(1/2)(\sum_{m \in M^{+}} x_{m}\pr_{\chi_{m}})\theta_{K/F}(1-j)}=\mathfrak{A}^{\sum_{m \in M} \pr_{\phi^{m}} \theta_{K/F}}$ lies in $P_K^{1-j}$.\ This completes the proof.\ 
\end{proof}

\subsection{CM-extensions with group $\mathbb{Z}/2\mathbb{Z} \times A_4$}\label{sec:alt}
Let $K/k$ be a finite Galois extension whose Galois group is isomorphic to $\mathbb{Z}/2\mathbb{Z} \times A_4$ where $A_4$ is the alternating group on $4$ letters.\ we regard $A_4$ as the group of even permutation of the set $\{1,\ 2,\ 3,\ 4\}$.\ Since the center of $A_4$ is trivial,\ the generator of $\mathbb{Z}/2\mathbb{Z}$ corresponds to the unique complex conjugation $j$.\
\subsubsection{Characters of $\mathbb{Z}/2\mathbb{Z} \times A_4$}
We set $x=(12)(34)$ and $y=(123)$
The irreducible characters of $\mathbb{Z}/2\mathbb{Z} \times A_4$ are determined by the following character table,\ where $\{ \cdot \}$ indicates conjugacy classes:
\begin{table}[H]
\caption{The character table of $\mathbb{Z} /2\mathbb{Z} \times A_4$}
\begin{center}
{\begin{tabular}{|c||c|c|c|c|c|c|c|c|}
\hline
             &$\{1\}$ & $\{ x \}$ & $\{ yx \}$ & $\{ y^{2}x\}$ & $\{ j \}$ & $\{ jx \}$ & $\{ jyx\}$ & $\{ j y^{2} x\}$ \\
\hline
$\chi_1$ & $1$ & $1$ & $1$ & $1$ & $1$ & $1$ & $1$ & $1$\\
\hline
$\chi_2$ & $1$ & $1$ & $1$ & $1$ & $-1$ & $-1$ & $-1$ & $-1$\\
\hline
$\chi_3$ & $1$ & $1$ & $\zeta_3$ & $\zeta_3^{2}$ & $1$ & $-1$ & $\zeta_3$ & $\zeta_3^{2}$ \\
\hline
$\chi_4$ & $1$ & $1$ & $\zeta_3$ & $\zeta_3^{2}$ & $-1$ & $-1$ & $-\zeta_3$ & -$\zeta_3^{2}$ \\
\hline
$\chi_5$ & $1$ & $1$ & $\zeta_3^{2}$ & $\zeta_3$ & $1$ & $1$ &$\zeta_3^{2}$ & $\zeta_3$\\
\hline
$\chi_6$ & $1$ & $1$ & $\zeta_3^{2}$ & $\zeta_3$ & $-1$ & $-1$ &$-\zeta_3^{2}$ & $-\zeta_3$\\
\hline
$\chi_7$ & $3$ & $-1$ & $0$ & $0$ & $3$ & $-1$ & $0$ & $0$\\
\hline
$\chi_8$ & $3$ & $-1$ & $0$ & $0$ & $-3$ & $1$ & $0$ & $0$\\
\hline
\end{tabular}}
\end{center}
\end{table} 
\noindent
From the above table,\ we see that the only odd characters are $\chi_2,\ \chi_4,\ \chi_6$ and $\chi_8$.\ Since $\ker \chi_2$ has index $2$,\ the corresponding subextension $K_2/k$ is a quadratic extension,\ and since we have $\ker \chi_4 =\ker \chi_6$ and  this subgroup has index $6$,\ we have $K_4 = K_6$ and $K_4/k$ is a cyclic extension of degree $6$.\ Let $V$ be Klein subgroup of $A_4$ and $\phi_{6,1},\ \phi_{6,2}$ and $\phi_{6,3}$ be characters of $\mathbb{Z}/2\mathbb{Z} \times V$ whose restriction to $V$ are non-trivial.\ Then we have $\Ind^{\mathbb{Z}/2\mathbb{Z} \times A_k}_{\mathbb{Z}/2\mathbb{Z} \times V}(\phi_{6,i}) = \chi_6$ for $i=1,\ 2,\ 3$ and the indices of their kernel in $\mathbb{Z}/2\mathbb{Z} \times V$ is $2$.\ Hence we see that $k_6 = K^{\mathbb{Z}/2\mathbb{Z} \times V}$ and $K_{6,i}/k_6$ is a quadratic extension for all $i$.\ 
\subsubsection{Proof of conjectures for extensions with group $\mathbb{Z}/2\mathbb{Z} \times A_4$}
In this subsection,\ we prove the following theorem by using Theorem \ref{thm:brumer-stark}.\ 
\begin{thm}\label{thm:brumer3}
Let $K/k$ be a finite Galois CM-extension whose Galois group is isomorphic to $\mathbb{Z}/2\mathbb{Z} \times A_4$ and $S$ be a finite set of places of $k$ which contains all infinite places.\ Then 
\begin{description}
\item[(1)] the $2$-part and the $3$-part of the weak non-abelian Brumer conjecture and the weak non-abelian Brumer-Stark conjecture are true for $K/k$ and $S$,\ 
\item[(2)] for each odd prime $l$ apart from $3$ which does not split in $\mathbb{Q}(\zeta_{3})$,\  the $l$-part of the non-abelian Brumer conjecture and the non-abelian Brumer-Stark conjecture are true for $K/k$ and $S$.\ 
\end{description}
\end{thm}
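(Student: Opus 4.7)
The plan is to apply Theorem \ref{thm:brumer-stark} and verify Brumer-Stark for the list $\mathbb{K}$ of abelian subextensions read off from the character table. From the analysis just given in \S \ref{sec:alt}, the only odd characters of $G=\mathbb{Z}/2\mathbb{Z}\times A_4$ are $\chi_2,\chi_4,\chi_6$ (one-dimensional) and $\chi_8$ (three-dimensional, induced from a one-dimensional character of $\mathbb{Z}/2\mathbb{Z}\times V$ where $V$ is the Klein four subgroup of $A_4$), so the list $\mathbb{K}$ consists of (a) the relative quadratic extension $K_2/k$, (b) the relative cyclic sextic extension $K_4/k=K_6/k$, and (c) the three relative quadratic extensions $K_{8,i}/k_8$ for $i=1,2,3$. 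Hence it suffices to establish the appropriate $l$-part of Brumer-Stark for one cyclic CM-extension of degree $6$ and for a handful of relative quadratic CM-extensions.

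First I would dispose of the quadratic pieces: for each of $K_2/k$ and $K_{8,i}/k_8$ the full Brumer-Stark conjecture is a theorem of Tate (\cite{Ta}, \S 3, case (c)), valid at every prime $l$. The remaining sextic extension $K_4/k$ is a cyclic CM-extension of degree $2p$ with $p=3$; this is exactly the setting of Proposition \ref{cl:6p}. Because any odd prime $l\ne 3$ not splitting in $\mathbb{Q}(\zeta_3)$ falls into Case I of that proposition, and because $l=3$ itself falls into Cases II/III (the $\omega_{K,3}/\omega_{E,3}$ dichotomy), and because $l=2$ does not split in $\mathbb{Q}(\zeta_3)=\mathbb{Q}(\sqrt{-3})$ (since $2$ is inert) so Case I of Proposition \ref{cl:6p} (or equivalently \cite[Theorem 3.2]{GRT04}) applies, the required annihilation result (in fact the slightly weaker form recorded in \eqref{eq:weakann3} and \eqref{eq:weakann4}, which is all Theorem \ref{thm:brumer-stark} consumes) holds for $K_4/k$ at every $l$ relevant to the statement. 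Feeding these into Theorem \ref{thm:brumer-stark} yields the $l$-part of the \emph{weak} non-abelian Brumer and Brumer-Stark conjectures for $K/k$ for $l=2$, $l=3$, and every odd $l\ne 3$ that does not split in $\mathbb{Q}(\zeta_3)$, which proves part (1) and the weak form of part (2).

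To upgrade part (2) from weak to non-weak I would invoke Lemma \ref{lem:relation2}. Since $|G|=24$, for any odd prime $l\ne 3$ we have $l\nmid |G|$, so $\mathbb{Z}_l[G]$ is already a maximal $\mathbb{Z}_l$-order in $\mathbb{Q}_l[G]$; in particular $\mathcal{I}(\mathbb{Z}_l[G])=\zeta(\Lambda')$ holds trivially. Moreover the irreducible character degrees of $G$ are $1$ and $3$, both prime to such $l$. Hence Lemma \ref{lem:relation2} converts the weak $l$-parts already obtained into the full non-abelian conjectures $B(K/k,S,l)$ and $BS(K/k,S,l)$, completing part (2). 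The same upgrade is unavailable for $l=2$ (where $\mathcal{I}(\mathbb{Z}_2[G])=\zeta(\Lambda')$ typically fails) and for $l=3$ (where the degree-$3$ character $\chi_8$ spoils the character-degree hypothesis), which is precisely why part (1) only asserts the weak form.

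The main technical obstacle is ensuring that Proposition \ref{cl:6p}, proved there for the cyclic sextic sitting inside a $D_{4p}$-extension, transfers cleanly to the cyclic sextic $K_4/k$ sitting inside the $\mathbb{Z}/2\mathbb{Z}\times A_4$-extension. The proof of that proposition uses only the analytic class number formula applied to $K_4/K_4^+$ together with Sands's style comparison of $|A_{K_4}|$ and $|A_{K_4^H}|$; none of those inputs depend on the ambient non-abelian group, so the argument should go through with essentially cosmetic changes. Similarly, one must check that the element $x\in\mathfrak{F}(\mathbb{Z}[G])$ restricts compatibly to the characters $\phi,\phi^{-1}$ of the unique cyclic subgroup of order $6$ of $G$ cutting out $K_4$, but this is handled uniformly by Lemma \ref{lem:conductor} in the same manner as in the proof of Theorem \ref{thm:brumer-stark}.
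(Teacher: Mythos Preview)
Your proposal is correct and follows essentially the same route as the paper: reduce via Theorem~\ref{thm:brumer-stark} to the abelian pieces in $\mathbb{K}$, dispatch the quadratic ones by Tate, and handle the cyclic sextic $K_4/k$ by the argument of Proposition~\ref{cl:6p} (which the paper restates as Proposition~\ref{cl:6p2} with the one-line justification that only $(1+\sigma+\sigma^2)x_{\chi}\pr_{\chi}=0$ is needed). Your write-up is in fact cleaner than the paper's in two respects: you use the correct index $8$ for the three-dimensional odd character (the paper's text has a typographical slip writing $\phi_{6,i}$, $k_6$, $K_{6,i}$ where $\phi_{8,i}$, $k_8$, $K_{8,i}$ are meant), and you make explicit the upgrade from weak to non-weak in part~(2) via Lemma~\ref{lem:relation2} using $l\nmid 24$, which the paper leaves implicit. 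One small imprecision: for $l=2$ you say ``Case~I of Proposition~\ref{cl:6p}\ldots applies,'' but in that proof Cases~I--III are prefaced by ``we assume $l$ is odd''; the $l=2$ input is the separate case~(i) invoking \cite[Theorem~3.2]{GRT04}, which you also cite, so the substance is fine.
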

\begin{rem}
{\rm (1)} In the case $k = \mathbb{Q}$,\ the above results except the $2$-part is contained in Nickel's work \cite{Ni11},\ \cite{Nib} if we assume $\mu=0$ as well as Theorem \ref{thm:brumer1}.\ \\
{\rm (2)} If no prime above $p$ splits in $K/K^{+}$ whenever $K^{cl} \subset (K^{cl})^{+}(\zeta_p)$,\ the above result holds for odd $p$ by \cite[Corollary  4.2]{Nia}.\ \\ 
\end{rem}
The observation in the previous subsection tells us that we have only to verify the Brumer-Stark conjecture for two relative quadratic extensions $K_2/k$,\ $K_4/k$ and $K_2^{\prime}/k_2$.\ By \cite{Ta}[\S3,\ case(c)],\ the Brumer-Stark conjecture is true for any relative quadratic extensions and hence true for $K_2/k,\ K_{6,1}/k_6,\ K_{6,2}/k_6$ and $K_{6,3}/k_6$.\ In order to complete the proof of Theorem \ref{thm:brumer3},\  we have to verify the $l$-part of the Brumer-Stark conjecture for $K_4/k$ for each prime $l$ which does not split in $\mathbb{Q}(\zeta_3)$.\ However,\ the proof of Theorem \ref{thm:brumer-stark} (and Lemma \ref{lem:relation3}) tells us that we only have to verify the slightly weaker annihilation result,\ that is,\ we  only need  (\ref{eq:weakann3}) and  (\ref{eq:weakann4}) for $K_4/k$.\ To do that,\ it is enough to prove the following proposition: 
\begin{prop}\label{cl:6p2}
Let $l$ be a prime which does not split in $\mathbb{Q}(\zeta_3)$.\ Let $F/k$ be any cyclic CM-extension of number fields of degree $6$.\ We assume $K$ contains $F$ so that $K/k$ is CM with Galois group $\mathbb{Z}/2\mathbb{Z} \times A_4$.\ We let $\sigma$ be a generator of the Galois group of $F^{\langle j \rangle}/k$ (hence $\Gal(F/k)=\langle \sigma j \rangle$).\  Take any element of the form $x_{\chi_6} \pr_{\chi_6}$ in $\mathfrak{F}(\mathbb{Z}/2\mathbb{Z}\times A_4)$.\ Then for any fractional ideal $\mathfrak{A}$ of $F$ whose class in $Cl(F)$ is of $l$-power order,\ 
\begin{description}
 \item[(1)] $\mathfrak{A}^{\omega_F x_{\chi_6} \pr_{\chi_6}\theta_{F/k}} = (\alpha)$ for some anti-unit $\alpha \in F^{*}$,\ 
 \item[(2)] $F(\alpha^{1/\omega_{F,l}})/k$ is abelian.\
\end{description} 
where $\omega_{F,l}$ is the $l$-part of $\omega_F$.\ 
\end{prop}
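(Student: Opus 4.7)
The plan is to run the argument of Proposition \ref{cl:6p} almost verbatim with the odd prime $p$ there replaced by $p = 3$, since $F/k$ is a cyclic CM-extension of degree $2 \cdot 3$. To this end, let $E := F^{\langle \sigma \rangle}$ be the quadratic CM-subfield of $F/k$, put $H := \langle \sigma \rangle$, and let $\psi$ denote the nontrivial character of $\Gal(E/k)$ inflated to $\Gal(F/k)$. The remaining odd characters of $\Gal(F/k) = \langle \sigma \rangle \times \langle j \rangle$ form a $\Gal(\mathbb{Q}(\zeta_3)/\mathbb{Q})$-conjugate pair $\phi, \phi^{-1}$ of faithful characters of order $6$, and the restriction of $\chi_6$ to $\Gal(F/k)$ may be identified with $\phi$. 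By Lemma \ref{lem:conductor}, the element $x_{\chi_6} \pr_{\chi_6}$ then descends, via Galois conjugation, to a well-defined element of $\mathbb{Z}_l[\Gal(F/k)]$ supported on the $(\phi,\phi^{-1})$-isotypic components.

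Set $A_F := \tfrac{1-j}{2}(Cl(F) \otimes \mathbb{Z}_l)$ and $A_E := \tfrac{1-j}{2}(Cl(E) \otimes \mathbb{Z}_l)$. The $l$-part of the analytic class number formula gives
\begin{align*}
 |A_F| &= \omega_{F,l}\, L(0,\psi,E/k)\, N_{\mathbb{Q}(\zeta_3)/\mathbb{Q}}\bigl(L(0,\phi,F/k)\bigr),\\
 |A_E| &= \omega_{E,l}\, L(0,\psi,E/k),
\end{align*}
and $|A_E| \leq |A_F^H|$, with equality when $l \neq 3$ (the inequality in the ramified case being supplied by \cite[Lemma 2.5]{GRT04}). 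Since $(x_{\chi_6} \pr_{\chi_6}) A_F^H = 0$, the natural surjection $A_F/A_F^H \twoheadrightarrow (x_{\chi_6} \pr_{\chi_6}) A_F$ combined with the two displays above yields
\[
 |(x_{\chi_6} \pr_{\chi_6}) A_F| \;\leq\; \frac{\omega_{F,l}}{\omega_{E,l}} \cdot N_{\mathbb{Q}(\zeta_3)/\mathbb{Q}}\bigl(L(0,\phi,F/k)\bigr).
\]
The hypothesis that $l$ does not split in $\mathbb{Q}(\zeta_3)$ makes $\mathbb{Z}_l[\zeta_3]$ a discrete valuation ring, so the module $(x_{\chi_6} \pr_{\chi_6}) A_F$ decomposes as a direct sum of cyclic $\mathbb{Z}_l[\zeta_3]$-modules whose lengths can be compared term by term with the length of $\mathbb{Z}_l[\zeta_3] / (\overline{\theta}_{F/k})$, where $\overline{\theta}_{F/k}$ is the image of $\theta_{F/k}$ under the projection $\mathbb{Z}_l[\Gal(F/k)] \twoheadrightarrow \mathbb{Z}_l[\zeta_3]$.

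From here the proof splits into the same three cases treated in Proposition \ref{cl:6p}: (I) $l \neq 3$, where $\omega_{F,l}/\omega_{E,l} = 1$ and the bound immediately shows that $\theta_{F/k}$ annihilates $(x_{\chi_6}\pr_{\chi_6})A_F$; (II) $l = 3$ with $\omega_{F,3} = \omega_{E,3}$, which is handled identically; and (III) $l = 3$ with $\omega_{F,3} \neq \omega_{E,3}$, where only $(\sigma - 1)\theta_{F/k}$ is seen to annihilate the module. Cases (I) and (II) directly yield an anti-unit $\alpha$ with $F(\alpha^{1/\omega_{F,l}})/k$ abelian. The main obstacle will be case (III): one must repeat the cocycle computation $3^e \theta_{F/k} = (\sigma-1)\beta\gamma\,\theta_{F/k}$ with $\gamma = \sigma^2 + g\sigma + g^2$ (where $g$ is the least positive integer representing the action of $\sigma$ on the $3^e$-th roots of unity in $F$) as in the last paragraph of \cite[Proposition 2.2]{GRT04}, and then invoke Tate's abelianness criterion \cite[Proposition 1.2 d)]{Ta84} to verify that the generator produced still defines an abelian extension over $k$. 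Since every estimate used depends on $p$ only through formal properties of $\mathbb{Z}_l[\zeta_p]$, the $p = 3$ specialization of the proof goes through without any change.
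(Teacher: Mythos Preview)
Your proposal is correct and takes essentially the same approach as the paper: the paper's proof simply states that ``Exactly the same proof as Proposition \ref{cl:6p} works since the only fact we need is that $(1 + \sigma + \sigma^{2}) x_{\chi_6}\pr_{\chi_6} A_F = 0$,'' and you have accurately spelled out that specialization with $p=3$.
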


\begin{proof}
Exactly the same proof as Proposition \ref{cl:6p} works since the only fact we need is that $(1 + \sigma + \sigma^{2}) x_{\chi_6}\pr_{\chi_6} A_F = 0$.\ 
\end{proof}
%Now we can prove the following theorem.\ 
%
%\begin{proof}
%By Theorem \ref{thm:brumer-stark},\ we only have to verify the Brumer-Stark conjecture for $K/k_m$.\ By Theorem \ref{thm:brumerofcyclic},\ $K/k_m$ has enough Brumer-Stark type annihilation property to verify weak conjectures from Theorem \ref{thm:brumer-stark}.\ This finishes the proof.\ 
%\end{proof}
  
Deoartment of Mathematics,\ Keio University, \\
Yokohama, 223-8522, Japan\\
 e-mail: jiro-math@z2.keio.jp

\begin{thebibliography}{0}
\bibitem[1]{Ba} Barsky, D.: {\it Fonctions z\^{e}ta p-adiques d'une classe de rayon des corps de nombres totalement r\'eels} , Groupe
d'Etude d'Analyse Ultrametrique (1977/78), Exp. No. 16
\bibitem[2]{Bu1} Burns, D.: {\it On derivatives of Artin L-series}, Invent. math. 186 (2011) 291-371.
\bibitem[3]{Bu2} Burns, D.: {\it On derivatives of p-adic L-series at s=0}, preprint
\bibitem[4]{BJ} Burns, D., Johnston, H.: {\it A non-abelian Stickelberger Theorem}, Compositio Math. 147 (2011), p. 35-55
\bibitem[5]{Ca} Cassou-Nogu\`es, P.: {\it Valeurs aux entiers n\'egatifs des fonctions z\^eta et fonctions z\^eta p-adiques}, Invent.
Math. 51 (1979), 29-59
\bibitem[6]{CR} Curtis, C. W., Reiner, I.:{\it Methods of Representation Theory with applications to finite groups and
orders}, Vol. 1, John Wiley \& Sons, (1981)
\bibitem[7]{DR} Deligne, P., Ribet, K.:{\it Values of abelian L-functions at negative integers over totally real fields}, Invent.
Math. 59 (1980), 227-286
\bibitem[8]{Gr07}Greither, C.: {\it Determining Fitting ideals of minus class groups via the Equivariant Tamagawa Number
Conjecture}, Compos. Math. 143, No. 6 (2007), 1399-1426
\bibitem[9]{GRT04} Greither, C., Roblot, X.-F., Tangedal, B.: {\it The Brumer-Stark Conjecture in some families of extensions
of specified degree}, Math. Comp. 73 (2004), 297-315
\bibitem[10]{GP} Greither, C., Popescu, C.: {\it An equivariant main conjecture in Iwasawa theory
and applications}, to appear in Journal of Algebraic Geometry.
\bibitem[11]{Ja} Jacobinski, H.: {\it On extensions of lattices}, Michigan Math. J. 13 (1966), 471-475. MR 0204538
(34 \# 4377)
\bibitem[12]{JN} Johnston, H.,Nickel,A.:{\it Noncommutative Fitting invariants and improved annihilation results},\ to appear in J. Lond. Math. Soc.
\bibitem[13]{Ka} Kakde, M.: {\it The main conjecture of Iwasawa theory for totally real fields},
preprint, see arXiv:1008.0142v3
\bibitem[14]{Ni10}  Nickel, A.: {\it Non-commutative Fitting invariants and annihilation of class
groups}, J. Algebra {\bf 323} (10) (2010), 2756-2778
\bibitem[15]{Ni10b}Nickel, A.: {\it On the Equivariant Tamagawa Number Conjecture in tame CM-extensions},
Math. Z. (2010) DOI 10.1007/s00209-009-0658-9
\bibitem[16]{Ni11}  Nickel, A.: {\it On the equivariant Tamagawa number conjecture in tame CM-extensions II}, Compos. Math. {\bf 147 (4)} (2011), 1179-1204
\bibitem[17]{Nia}Nickel, A.{\it On non-abelian Stark-type conjectures}, Ann. Inst. Fourier 61 (6), (2011), 2577-2608 .
Fourier
\bibitem[18]{Nib}Nickel, A.:{\it Integrality of Stickelberger elements and the equivariant Tamagawa number conjecture},
preprint,
\bibitem[19]{Nic}  Nickel, A.: {\it Equivariant Iwasawa theory and non-abelian Stark-type conjectures},\ to appear in Proc. Lond. Math. Soc.
\bibitem[20]{RW}  Ritter, J., Weiss, A.:{\it On the `main conjecture' of equivariant Iwasawa theory}, J. Amer. Math. Soc. 24 (2011), 1015-1050.
\bibitem[21]{Re} Reiner, I.: {\it Maximal orders} L.M.S. Monographs, Academic Press, London (1975)
\bibitem[22]{Sa}Sands, J.:{\it Galois groups of exponent two and the Brumer-Stark conjecture}, J. Reine
Angew. Math. 349 (1984), 129-135.
\bibitem[23]{Si} Siegel, C.:{\it $\ddot{U}$ber die Zetafunktionen indefiniter quadratischen Formen}, I, II, Math. Z., 43 (1938), 682-708; 44 (1939), 398-426.
15-56.
\bibitem[24]{Ta84} Tate, J.:{\it Les Conjectures de Stark sur les Fonctions L d'Artin en s = 0}, Birkhauser, 1984.
\bibitem[25]{Ta} Tate, J.:{\it Brumer-Stark-Stickelberger, S\'{e}minaire de Th\'{e}orie des Nombres}, Universit\'{e} de Bordeaux I, U. E. R. de math\'{e}matiques et d'information expos\'{e}
no. 24
\end{thebibliography}
\end{document}